\numberwithin{equation}{section}
\newtheorem{mtheorem}{Theorem}
\newtheorem{theorem}{Theorem}[section]
\newtheorem{proposition}[theorem]{Proposition}
\newtheorem{lemma}[theorem]{Lemma}
\newtheorem{corollary}[theorem]{Corollary}
\newtheorem{setting}[theorem]{Setting}
\theoremstyle{definition}
\newtheorem{definition}[theorem]{Definition}
\newtheorem{example}[theorem]{Example}
\theoremstyle{remark}
\newtheorem{remark}[theorem]{Remark}
\newcommand{\define}[1]{{\emph{#1}}}
\newcommand{\sptag}[1]{\href{http://stacks.math.columbia.edu/tag/#1}{Tag~#1}}
\renewcommand{\epsilon}{{\varepsilon}}
\renewcommand{\phi}{{\varphi}}
\DeclareMathOperator{\im}{Im}
\DeclareMathOperator{\Hom}{Hom}
\DeclareMathOperator{\HH}{H}
\DeclareMathOperator{\Spec}{Spec}
\newcommand{\sheafHom}{{\mathcal{H}{om}}}
\newcommand{\liset}{{\operatorname{lis-\acute{e}t}}}
\newcommand{\Mod}{{\ensuremath{\operatorname{Mod}}}}
\newcommand{\id}{{\operatorname{id}}}
\newcommand{\qc}{{\operatorname{qc}}}
\newcommand{\pc}{{\operatorname{pc}}}
\newcommand{\co}{{\operatorname{coh}}}
\newcommand{\pf}{{\operatorname{pf}}}
\newcommand{\sg}{{\operatorname{sg}}}
\newcommand{\D}{{\operatorname{D}}}
\newcommand{\bd}{{\operatorname{b}}}
\newcommand{\locbd}{{\operatorname{lb}}}
\newcommand{\Rd}{\mathsf{R}}
\newcommand{\ol}[1]{{\overline{#1}}}
\newcommand{\ra}{\rightarrow}
\newcommand{\rla}{\rightleftarrows}
\newcommand{\xra}[2][]{\xrightarrow[{#1}]{#2}}
\newcommand{\xla}[2][]{\xleftarrow[{#1}]{#2}}
\newcommand{\sira}{\xra{\sim}}
\newcommand{\Ra}{\Rightarrow}
\newcommand{\xRa}[2][]{\xRightarrow[{#1}]{#2}}
\newcommand{\siRa}{\xRa{\sim}}
\title[Conservative descent]{Conservative descent for semi-orthogonal decompositions}
\begin{document}
\subjclass[2010]{Primary 14F05; Secondary 14A20}
\keywords{Semi-orthogonal decomposition, Derived category, Algebraic stack}

\author{Daniel Bergh \and Olaf M.~Schn{\"u}rer}

\address{       
  Department of Mathematical Sciences\\
  Copenhagen University\\
  Universitetsparken~5\\
  2100~København~Ø\\ 
  Denmark
}
\email{dbergh@math.ku.dk}

\address{
  Institut f\"ur Mathematik\\
  Universit{\"a}t Paderborn\\
  Warburger Stra\ss{}e 100\\
  33098 Paderborn\\
  Germany
}

\email{olaf.schnuerer@math.uni-paderborn.de}


\begin{abstract}
  Motivated by the local flavor of several well-known
  semi-orthogonal decompositions in algebraic geometry, we
  introduce a technique called 
  \emph{conservative descent}, which 
  shows that it is enough to establish these decompositions
  locally. The decompositions we have in mind are those for
  projectivized vector bundles and blow-ups, due to Orlov, and
  root stacks, due to Ishii and Ueda.
  Our technique simplifies the proofs of these decompositions and
  establishes them in greater generality for arbitrary algebraic
  stacks.
\end{abstract}


\maketitle
\tableofcontents

\section{Introduction}
\label{sec:introduction}
Semi-orthogonal decompositions (see Definition~\ref{d:semi-orth})
of derived categories are central in the study of
non-commutative aspects of algebraic geometry.
Such decompositions are well-known for derived categories of
projectivized vector bundles, blow-ups and root stacks.
More examples are given in a survey by Kuznetsov~\cite{kuznetsovICM}.
However, the references describing these
decompositions usually 
impose quite restrictive conditions on the geometric objects
under consideration (cf.~Remark~\ref{rem-references}).
For instance, they often only consider smooth and projective varieties over a field. 
This has been too limited for our purposes,
and this article grew out of the need to generalize the semi-orthogonal decompositions
mentioned above to the context of algebraic stacks.
We use the results of the present article in our recent work \cite{bls2016},
where we show that the derived category of
a smooth, proper Deligne--Mumford stack is a semi-orthogonal summand of the derived category of a smooth projective variety.
We also use them in the follow-up article \cite{bs2019},
where we generalize a result by Bernardara \cite{bernardara-BS-schemes} on semi-orthogonal decompositions
of derived categories of Brauer--Severi varieties using stacky methods.
Furthermore, the results are used in \cite{bgll2017} to define the categorical measure for Deligne--Mumford stacks.

Naively, one would expect that most statements about schemes and
varieties generalize via simple descent arguments to algebraic stacks.
However, when working with derived categories,
it is not clear that such an approach works since
derived categories do not satisfy descent in the usual sense.
The problem is that the derived pull-back along a faithfully flat morphism is usually
not faithful.
Indeed, this can already be seen by considering the covering of the projective
line by its standard affine charts.

A modern approach to overcome such obstacles is to
consider enhancements of derived categories,
either by differential graded categories or by $\infty$-categories.
However, this comes at the cost of invoking a substantial amount of technical machinery.

Our observation is that there is a technically less demanding way to solve the problem.
It is based on the fact that even though the derived pull-back along a faithfully flat
morphism is not faithful, it has the weaker property of being \emph{conservative}.
That is, the functor reflects isomorphisms.
We formalize a technique, which we call \emph{conservative descent},
which allows us to descend certain semi-orthogonal decompositions along
conservative triangulated functors.
Our methods are described entirely in the classical language of triangulated categories.

In the abstract setting of triangulated categories, our conservative descent theorem is formulated as follows.
\begin{mtheorem}[{see Theorem~\ref{t-main-categorical}}]
\label{t-main-categorical-intro}
Let $\mathcal{T}$ and~$\mathcal{T}'$ be triangulated categories
and let $F \colon \mathcal{T} \ra \mathcal{T}'$ be a conservative 
triangulated functor.
Let $S_1, \dots, S_n$ and $S_1', \dots, S_n'$ be sequences of
idempotent comonads (see Definition~\ref{d:comonad}) on $\mathcal{T}$ and $\mathcal{T}'$,
respectively,
and assume that $S_i$ and $S_i'$ are compatible with respect to
$F$ for each~$i$ 
in the sense of Definition~\ref{d:compatibility}.
If the sequence 
\begin{equation}
\label{eq:sod-upstairs-intro}
\im S'_1, \ldots, \im S'_n
\end{equation}
of essential images is semi-orthogonal in $\mathcal{T}'$,
then so is the sequence 
\begin{equation}
\label{eq:sod-downstairs-intro}
\im S_1, \ldots, \im S_n
\end{equation}
in $\mathcal{T}$. Both sequences consist of right admissible
subcategories of $\mathcal{T}'$ and $\mathcal{T}$, respectively.
Moreover, if \eqref{eq:sod-upstairs-intro} is a semi-orthogonal decomposition of $\mathcal{T}'$,
then \eqref{eq:sod-downstairs-intro} is a semi-orthogonal decomposition of $\mathcal{T}$.
\end{mtheorem}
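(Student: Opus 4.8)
The plan is to prove both assertions by one and the same device: to rewrite the property we want to establish in $\mathcal{T}$ — first semi-orthogonality, then generation — as the vanishing of suitable \emph{objects} of $\mathcal{T}$, and then to deduce that vanishing from its counterpart in $\mathcal{T}'$, which holds by hypothesis, using that a conservative triangulated functor detects zero objects together with the compatibility of each $S_i$ with $S_i'$. Throughout I shall use two elementary facts about an idempotent comonad $S$ on a triangulated category with counit $\epsilon\colon S\ra\id$, which I expect to be available around Definition~\ref{d:comonad}: an object $X$ lies in $\im S$ exactly when $\epsilon_X$ is an isomorphism, and $SB=0$ whenever $\epsilon_B=0$ (apply $S$ to $\epsilon_B$ and recall that $S\epsilon$ is invertible). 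These yield the lemma I will rely on: if $X\in\im S$ and $SY=0$, then $\Hom_{\mathcal{T}}(X,Y)=0$, since any $f\colon X\ra Y$ satisfies $f\circ\epsilon_X=\epsilon_Y\circ S(f)=0$ while $\epsilon_X$ is invertible. Finally I read Definition~\ref{d:compatibility} as furnishing, for each $i$, a natural isomorphism $FS_i\cong S_i'F$ that intertwines the two counits, and I will use it freely, including iterated instances like $FS_iS_j\cong S_i'S_j'F$.

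\emph{Semi-orthogonality.} Fix indices $i,j$ with $\Hom_{\mathcal{T}'}(\im S_i',\im S_j')=0$ and let $X\in\im S_i$, $Y\in\im S_j$; I must show $\Hom_{\mathcal{T}}(X,Y)=0$. By the lemma it suffices to prove $S_iY=0$. Since $Y\in\im S_j$ we have $Y\cong S_jY$, hence $S_iY\cong S_iS_jY$, and compatibility gives $F(S_iS_jY)\cong S_i'S_j'(FY)$. Now $S_j'(FY)$ lies in $\im S_j'$, so the counit of $S_i'$ at this object is a morphism in $\mathcal{T}'$ from $\im S_i'$ to $\im S_j'$, hence zero; by the second elementary fact $S_i'(S_j'(FY))=0$. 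Therefore $F(S_iS_jY)=0$, so $S_iS_jY=0$ because $F$ is conservative, and so $S_iY=0$. This shows that \eqref{eq:sod-downstairs-intro} is semi-orthogonal.

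\emph{Generation.} Assume moreover that \eqref{eq:sod-upstairs-intro} is a semi-orthogonal decomposition. Given $Z\in\mathcal{T}$ I would peel off the pieces one at a time with the comonads: put $Z_n:=Z$ and, for $k=n,n-1,\dots,1$ (in the order prescribed by Definition~\ref{d:semi-orth}), $Z_{k-1}:=\Cone\bigl(\epsilon_{k,Z_k}\colon S_kZ_k\ra Z_k\bigr)$, so that we obtain triangles
\begin{equation*}
  S_kZ_k\ra Z_k\ra Z_{k-1}\ra (S_kZ_k)[1].
\end{equation*}
Applying the triangulated functor $F$ and using the compatibility of each $S_k$ with $S_k'$ on objects and on counits, an induction identifies $FZ_k$ with the $k$-th stage $W_k$ of the tower built the same way from $FZ$ and the $S_k'$ in $\mathcal{T}'$. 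Since \eqref{eq:sod-upstairs-intro} is a semi-orthogonal decomposition, this latter tower is the canonical filtration of $FZ$ attached to the decomposition, and its final term vanishes, $W_0=0$, precisely because the $\im S_i'$ generate $\mathcal{T}'$. Hence $FZ_0\cong W_0=0$, and $Z_0=0$ by conservativity of $F$. Reading the triangles from the bottom up, $Z_0=0$ forces $Z_1\cong S_1Z_1\in\im S_1$, and then each $Z_k$ — sitting in a triangle together with $S_kZ_k\in\im S_k$ and $Z_{k-1}$ — lies in the triangulated subcategory generated by $\im S_1,\dots,\im S_k$; for $k=n$ this says $Z$ belongs to it. As $Z$ was arbitrary and \eqref{eq:sod-downstairs-intro} is semi-orthogonal, it is a semi-orthogonal decomposition of $\mathcal{T}$.

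The step I expect to be the real obstacle is hidden in the first assertion: descending the vanishing of a $\Hom$-group along $F$. As $F$ is merely conservative and not faithful, $\Hom_{\mathcal{T}}(X,Y)\ra\Hom_{\mathcal{T}'}(FX,FY)$ need not be injective, so one cannot argue directly; the point is to exchange this $\Hom$-vanishing for the vanishing of the single object $S_iY$, which conservativity \emph{can} see — and supplying such an object is exactly what the comonad structure is for. The generation step is a variation on the same idea, and the only further thing to be careful about is that the compatibility data are natural isomorphisms, not identities, so one must check that they intertwine the counits when setting up the induction $FZ_k\cong W_k$; that is precisely the content of Definition~\ref{d:compatibility}.
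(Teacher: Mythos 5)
Your proposal is correct and follows essentially the same strategy as the paper: both assertions are reduced to the vanishing of objects built by composing the comonads (respectively, iterated cones of their counits), which the conservative functor $F$ detects after transporting along the compatibility isomorphisms. The only real difference is that where the paper works functorially — establishing a compatibility between the complementary monads $S_i^\perp$ and $S_i'^\perp$ (Proposition~\ref{p:complementary-monads-compatible}) and then applying the characterizations of Propositions~\ref{prop-semi-orthogonal} and~\ref{prop-decomposition} — you work objectwise with cones of counits, which suffices and sidesteps the naturality of $\sigma^\perp$; just note that your claim $W_0=0$ is precisely one implication of Proposition~\ref{prop-decomposition} and its proof uses semi-orthogonality upstairs (via a short descending induction) in addition to generation.
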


The important point here is compatibility between the comonads $S_i$ and~$S_i'$.
In order to formulate a statement which is easy to apply in geometric situations,
we introduce the notion of \emph{relative Fourier--Mukai transforms} (see~Definition~\ref{def-fourier-mukai}).
Just as their classical counterparts, these are functors attached
to certain geometric data.
The comonads in the statement of Theorem~\ref{t-main-categorical-intro}
are typically induced by relative Fourier--Mukai transforms.
Due to the relative flavor of our definition,
these transforms admit a natural notion of flat base change,
and the induced comonads automatically satisfy the required compatibility conditions.
This allows us to give the following formulation of the conservative descent theorem in the geometric 
setting.

\begin{mtheorem}[{see Theorem~\ref{t:main-geometric}}]
\label{t:main-geometric-intro}
Let $Z_1, \ldots, Z_n$ and~$X$ be algebraic stacks over some base algebraic stack $S$,
and assume that $\Phi_i \colon \D_\qc(Z_i) \to \D_\qc(X)$, for $1 \leq i \leq n$,
is a Fourier--Mukai transform over $S$ in the sense of Definition~\ref{def-fourier-mukai}.
Let $u\colon S' \to S$ be a faithfully flat morphism,
and denote the base change of the objects above by $Z'_1, \ldots, Z'_n$, $X'$ and $\Phi'_i \colon \D_\qc(Z'_i) \to \D_\qc(X')$, respectively.
Then for each $i$, the functor $\Phi_i$ is fully faithful
provided that $\Phi'_i$ is fully faithful.

Assume that all $\Phi_i'$, and therefore also all $\Phi_i$, are fully faithful.
If the sequence  
\begin{equation}
\label{eq-sod-local-intro}
\im \Phi'_1, \ldots, \im \Phi'_n
\end{equation}
of essential images is semi-orthogonal in $\D_\qc(X')$,
then so is the sequence
\begin{equation}
\label{eq-sod-intro}
\im \Phi_1, \ldots, \im \Phi_n
\end{equation}
in $\D_\qc(X)$.
Both sequences consist of right admissible
subcategories of $\D_\qc(X')$ and $\D_\qc(X)$, respectively.
Moreover, if \eqref{eq-sod-local-intro} is a semi-orthogonal decomposition of $\D_\qc(X')$,
then \eqref{eq-sod-intro} is a semi-orthogonal decomposition of $\D_\qc(X)$.
\end{mtheorem}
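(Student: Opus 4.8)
The plan is to deduce the statement from the categorical Theorem~\ref{t-main-categorical} by taking the conservative functor there to be derived pull-back along a faithfully flat morphism, and by taking the idempotent comonads to be the ones induced by the relative Fourier--Mukai transforms $\Phi_i$ and $\Phi_i'$. Throughout, write $w_i \colon Z_i' \to Z_i$ and $v \colon X' \to X$ for the morphisms obtained from $u \colon S' \to S$ by base change. Since faithful flatness is stable under base change, these morphisms are again faithfully flat, and consequently $\Ld w_i^*$ and $\Ld v^*$ are conservative functors on the relevant categories $\D_\qc(-)$: the cone of a morphism that pulls back to an isomorphism pulls back to zero, so by flatness its cohomology sheaves pull back to zero, and faithful flatness forces them to vanish, whence the cone is zero and the morphism is an isomorphism.

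First I would establish the claim about full faithfulness. Let $\Psi_i$ and $\Psi_i'$ be right adjoints of $\Phi_i$ and $\Phi_i'$; these exist because Fourier--Mukai transforms over $S$ admit adjoints, and the relative formalism supplies flat base change isomorphisms $\Ld v^* \circ \Phi_i \simeq \Phi_i' \circ \Ld w_i^*$ and $\Ld w_i^* \circ \Psi_i \simeq \Psi_i' \circ \Ld v^*$ that are compatible with the units and counits of the adjunctions $\Phi_i \dashv \Psi_i$ and $\Phi_i' \dashv \Psi_i'$. Hence $\Ld w_i^*$ carries the unit $\id \Rightarrow \Psi_i \Phi_i$ to a natural transformation isomorphic to the unit $\id \Rightarrow \Psi_i' \Phi_i'$ whiskered with $\Ld w_i^*$. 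If $\Phi_i'$ is fully faithful the latter is invertible, so by conservativity of $\Ld w_i^*$ the former is invertible, i.e.\ $\Phi_i$ is fully faithful.

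Now assume all $\Phi_i$ and $\Phi_i'$ are fully faithful, and put $S_i := \Phi_i \Psi_i$ and $S_i' := \Phi_i' \Psi_i'$; these are comonads on $\D_\qc(X)$ and $\D_\qc(X')$ via the counits of the adjunctions. Full faithfulness makes the units $\id \Rightarrow \Psi_i \Phi_i$ and $\id \Rightarrow \Psi_i' \Phi_i'$ invertible, which forces the comultiplications of $S_i$ and $S_i'$ to be invertible; thus $S_i$ and $S_i'$ are idempotent comonads, and a routine argument identifies $\im S_i = \im \Phi_i$ and $\im S_i' = \im \Phi_i'$. Chaining the flat base change isomorphisms gives $\Ld v^* \circ S_i = \Ld v^* \circ \Phi_i \circ \Psi_i \simeq \Phi_i' \circ \Ld w_i^* \circ \Psi_i \simeq \Phi_i' \circ \Psi_i' \circ \Ld v^* = S_i' \circ \Ld v^*$, and by the compatibility with the adjunction data this is an isomorphism of comonads of the kind required by Definition~\ref{d:compatibility}; so $S_i$ and $S_i'$ are compatible with respect to $F := \Ld v^* \colon \D_\qc(X) \to \D_\qc(X')$. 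Since $F$ is conservative, Theorem~\ref{t-main-categorical} applies to the sequences $S_1, \dots, S_n$ and $S_1', \dots, S_n'$ and yields: if $\im S_1', \dots, \im S_n'$ is semi-orthogonal in $\D_\qc(X')$ then $\im S_1, \dots, \im S_n$ is semi-orthogonal in $\D_\qc(X)$, and the same implication holds with ``semi-orthogonal sequence'' replaced by ``semi-orthogonal decomposition''. Substituting $\im S_i = \im \Phi_i$ and $\im S_i' = \im \Phi_i'$ gives the assertion.

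The only non-formal ingredient, and the step I expect to be the main obstacle, is the flat base change package for relative Fourier--Mukai transforms: not merely the base change isomorphism $\Ld v^* \Phi_i \simeq \Phi_i' \Ld w_i^*$, but its compatibility with passage to right adjoints and with the unit and counit $2$-cells, which is exactly what is needed to turn the induced comonads into a compatible pair in the sense of Definition~\ref{d:compatibility}. Concretely, one must check that the right adjoint $\Psi_i$ of a relative Fourier--Mukai transform again satisfies flat base change; this is where the relative nature of Definition~\ref{def-fourier-mukai} does its work, and once it is in place everything else above is formal.
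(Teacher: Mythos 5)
Your proposal is correct and follows essentially the same route as the paper: full faithfulness descends via conservativity of $g^*$ applied to the unit (Propositions~\ref{p:flat-BC-FM}, \ref{p:conservative-descent-ff} and~\ref{p:FM-functor-ff-local}), the comonads $S_i=\Phi_i\Psi_i$ and $S_i'=\Phi_i'\Psi_i'$ are idempotent with the right essential images (Example~\ref{eg:FM-comonad}), their $g^*$-compatibility comes from the base change isomorphisms (Proposition~\ref{p:FM-comonads-f*-compatible}), and Theorem~\ref{t-main-categorical} finishes the argument. The one ingredient you flag as the main obstacle — that the mate $f^*\Psi_i \Rightarrow \Psi_i' g^*$ of the base change isomorphism is again an isomorphism — is exactly the paper's Proposition~\ref{p:flat-BC-FM}, proved by factoring $\Phi_i$ as $q_*\circ(\mathcal{K}\otimes-)\circ p^*$ and checking each mate separately (using property \ref{enum:pty-base-change} for $q$, perfectness of $\mathcal{K}$, and tor-independent base change for $p_*$).
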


As applications of Theorem~\ref{t:main-geometric-intro}, we obtain generalizations and unified proofs of
the semi-orthogonal decompositions for projectivized vector bundles, blow-ups and root stacks.
We summarize the results in the following theorem and refer to Section~\ref{sec:applications} for precise statements.
\begin{mtheorem}
\label{t:applications}
The unbounded derived category $\D_\qc(X)$ of an algebraic stack $X$
admits a naturally defined semi-orthogonal decomposition 
\begin{enumerate}[label=(\roman*)]
\item
\label{enum:projective-bundle}
if $X$ is
a projectivized vector bundle $\mathbb{P}_S(\mathcal{E})$, where $S$ is an algebraic stack and 
$\mathcal{E}$ is a locally free $\mathcal{O}_S$-module of constant finite rank
(Theorem~\ref{t:sod-projective-bundle}, Corollary~\ref{c:sod-projective-bundle});
\item
\label{enum:blow-up}
if $X$ is
a blow-up $\widetilde{Y}$ of an algebraic stack $Y$ 
in a regular closed immersion $Z \hookrightarrow Y$ of constant codimension $c$
(Theorem~\ref{t:sod-blow-up}, Corollary~\ref{c:sod-blow-up});
\item
\label{enum:root-stack}
if $X$ is
an $r$-th root stack $\widetilde{Y}$ of an algebraic stack $Y$ 
in an effective Cartier divisor
$E \subset Y$
(Theorem~\ref{t:sod-root-stack}, Corollary~\ref{c:sod-root-stack}).
\end{enumerate}
Moreover, there are induced semi-orthogonal decompositions of the subcategories $\D_\pf(X)$
of perfect complexes and $\D_\pc^\locbd(X)$ of locally bounded pseudo-coherent complexes
as well as of the singularity category $\D_\sg(X)$, which is defined as the Verdier quotient
$\D_\pc^\locbd(X)/\D_\pf(X)$.
Note that the category $\D_\pc^\locbd(X)$ coincides with the category $\D^\bd_\co(X)$ of complexes with bounded coherent cohomology if $X$ is noetherian.
\end{mtheorem}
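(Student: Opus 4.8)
The plan is to deduce all three statements from the geometric conservative descent theorem, Theorem~\ref{t:main-geometric-intro}; the arguments will be carried out in Section~\ref{sec:applications}, where parts~\ref{enum:projective-bundle}, \ref{enum:blow-up} and~\ref{enum:root-stack} appear as Theorems~\ref{t:sod-projective-bundle}, \ref{t:sod-blow-up} and~\ref{t:sod-root-stack}, and the assertions about the subcategories as Corollaries~\ref{c:sod-projective-bundle}, \ref{c:sod-blow-up} and~\ref{c:sod-root-stack}. In each case one proceeds in four steps: exhibit the functors that should form the decomposition as relative Fourier--Mukai transforms over a suitable base stack; pass, via a smooth cover of the base, to the case where the base is a noetherian affine scheme; prove the decomposition there by an explicit computation; and apply Theorem~\ref{t:main-geometric-intro}.

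First I would set up the Fourier--Mukai transforms. For~\ref{enum:projective-bundle} the base is~$S$, the sources are $r$ copies of~$S$ with $r$ the rank of~$\mathcal{E}$, and the transforms are $\mathcal{F} \mapsto \Ld\pi^*\mathcal{F}\otimes\mathcal{O}(i)$ for $\pi\colon\mathbb{P}_S(\mathcal{E})\to S$ and $r$ consecutive integers~$i$; since $S\times_S\mathbb{P}_S(\mathcal{E}) = \mathbb{P}_S(\mathcal{E})$, these are Fourier--Mukai over~$S$ with kernel~$\mathcal{O}(i)$. For~\ref{enum:blow-up} and~\ref{enum:root-stack} the base is~$Y$: one component is pull-back $\Ld\pi^*$ along $\pi\colon\widetilde Y\to Y$ with kernel~$\mathcal{O}_{\widetilde Y}$, and the remaining components are pull-back to the exceptional divisor (resp.\ the Cartier divisor of~$\widetilde Y$), twisted by a line bundle and pushed forward --- again Fourier--Mukai transforms over~$Y$ in the sense of Definition~\ref{def-fourier-mukai}. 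Checking that these data genuinely satisfy that definition is essentially bookkeeping.

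Next I would choose a smooth surjection $u\colon S'\to S$ (resp.\ $u\colon S'\to Y$) with~$S'$ a scheme, shrink~$S'$ to be affine, and, by a limit argument, reduce further to~$S'$ noetherian affine. Each construction is stable under the flat base change~$u$ --- a projectivized bundle pulls back to a projectivized bundle, the base change of a blow-up in a regular closed immersion of codimension~$c$ is the blow-up in such an immersion, and a root stack in an effective Cartier divisor pulls back similarly --- and the Fourier--Mukai transforms above pull back to the analogous transforms over~$S'$, which is exactly the compatibility built into Theorem~\ref{t:main-geometric-intro}. Over the noetherian affine base~$S'$ one proves full faithfulness, semi-orthogonality, and (in the split cases) generation directly, adapting the arguments of Orlov and of Ishii--Ueda; the essential inputs are the projection formula, the cohomology of projective bundles together with the relative resolution of the diagonal, and the geometry of the exceptional resp.\ Cartier divisor. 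Theorem~\ref{t:main-geometric-intro} then transports these local statements to the claimed semi-orthogonal decompositions of $\D_\qc(X)$. I expect this local step --- establishing the decompositions over a noetherian affine base without any smoothness or projectivity hypotheses --- to be the main obstacle; the point of Theorem~\ref{t:main-geometric-intro} is that it is the only step that requires genuine work, the descent to arbitrary algebraic stacks being automatic.

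Finally, to pass to the subcategories, one arranges the Fourier--Mukai kernels to be perfect, supported properly over both factors, and of finite Tor-amplitude over the target, so that each projection functor of the decomposition preserves $\D_\pf(X)$ and $\D_\pc^\locbd(X)$. A formal lemma then shows that a semi-orthogonal decomposition of $\D_\qc(X)$ whose projection functors preserve a thick subcategory restricts to one of that subcategory, and that the induced decomposition of $\D_\pc^\locbd(X)$ descends along the Verdier quotient to a decomposition of $\D_\sg(X) = \D_\pc^\locbd(X)/\D_\pf(X)$. The identification $\D_\pc^\locbd(X) = \D^\bd_\co(X)$ for noetherian~$X$ is standard.
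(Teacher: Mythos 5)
Your proposal follows essentially the same route as the paper: exhibit the functors as relative Fourier--Mukai transforms over $S$ (resp.\ over $Y$), invoke Theorem~\ref{t:main-geometric} to reduce to an affine base where the divisor/ideal is principal/generated by a regular sequence, verify full faithfulness, semi-orthogonality and fullness there by direct cohomology computations, and obtain the decompositions of $\D_\pf(X)$, $\D_\pc^\locbd(X)$ and $\D_\sg(X)$ from the formal restriction statement (Theorem~\ref{t:restrict-pf-coh-sg}, resting on Proposition~\ref{prop-fm-restriction} and Corollary~\ref{c:restrict-sod-right-admissible}). Two inessential deviations: the paper never reduces to a \emph{noetherian} affine base --- its local arguments (the cohomology of $\mathbb{P}^n_R$ and the ample-generator Lemmas~\ref{l-sod-generator} and~\ref{lemma-compact-ample}) work over an arbitrary ring $R$, so the limit argument you invoke (which would be delicate for the unbounded category $\D_\qc$) is not needed --- and for fullness the paper uses the generator lemma rather than a resolution of the diagonal.
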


\begin{remark}
\label{rem-references}
The semi-orthogonal decompositions mentioned in Theorem~\ref{t:applications} are well known
in several special cases.
Here we give the relevant references.
\begin{enumerate}
\item Be\u{\i}linson proves a version of part
    \ref{enum:projective-bundle} for $\mathbb{P}^n_k$ where $k$ is a
    field in \cite{beilinson1978}. His proof generalizes to
    arbitrary algebraic stacks $S$.
\item
Orlov proved versions of part
\ref{enum:projective-bundle} and~\ref{enum:blow-up} of Theorem~\ref{t:applications} in
\cite[Theorem~2.6 and Theorem~4.3]{orlov1992}.
He works in the context of smooth projective varieties over the field of complex numbers
and states the result as a semi-orthogonal decomposition of $\D^\bd_\co(X)$.
It is not clear to us how to generalize his proof of
\ref{enum:blow-up} to arbitrary algebraic stacks $Y$.
A proof of \ref{enum:blow-up}, basically in the same setting as Orlov's version,
also appears in the textbook by Huybrechts~\cite[Proposition~11.18]{huybrechts2006}.
Also this proof seems to be hard to generalize.
\item
Elagin generalizes Orlov's versions of part
\ref{enum:projective-bundle} and~\ref{enum:blow-up} of Theorem~\ref{t:applications}
to stack quotients by linearly reductive group schemes
\cite[Theorem~10.1, Theorem~10.2]{elagin2012}.
He uses a cohomological descent argument where he works locally on the source rather than locally on the base as we do.
\item
A version of part \ref{enum:root-stack} of
Theorem~\ref{t:applications} is given by Ishii--Ueda
\cite[Proposition~6.1]{iu2015}.
We give a more detailed statement in \cite[Theorem~4.7]{bls2016}.
\end{enumerate}
\end{remark}

\begin{remark}
\label{rem:thomason}
Thomason provides in \cite{thomason-fibre-projectif} a proof
of the Grothendieck--Berthelot--Quillen theorem which describes the
algebraic K-theory of a projectivized vector bundle as a product
of copies of the algebraic K-theory of the base scheme; in
\cite{thomason1993} he describes the algebraic K-theory of a
blow-up in a similar way. 
These two descriptions follow from the
semi-orthogonal decompositions of the subcategories of perfect
complexes in the instances \ref{enum:projective-bundle}
and~\ref{enum:blow-up} of Theorem~\ref{t:applications}. 
Without
using the terminology, Thomason implicitly establishes these
semi-orthogonal decompositions. 
He works with quasi-compact and quasi-separated schemes. 
It seems to us that his arguments should generalize to algebraic stacks,
which would give a proof of part \ref{enum:projective-bundle}
and~\ref{enum:blow-up} of Theorem~\ref{t:applications} different from
the one we give in this article.
\end{remark}

\subsection{Outline}
In Section~\ref{sec:preliminaries},
we summarize some basic facts about the derived category of an algebraic stack.
The relative notion of a Fourier--Mukai transform is defined in Section~\ref{sec:fourier-mukai}.
In two following sections, we develop the theory of conservative descent from the abstract point of view
of triangulated categories. In Section~\ref{sec:idemp-comon-mates}, we review the 2-categorical notions of mates and idempotent comonads.
We also show that a flat base change of a relative Fourier--Mukai
transform induces a compatibility between the associated
idempotent comonads as required later on for
  deducing Theorem~\ref{t:main-geometric-intro} from Theorem~\ref{t-main-categorical-intro}.
In Section~\ref{sec:semi-decomp-desc},
we explain that certain vanishing conditions on sequences of idempotent comonads are equivalent to the
 defining conditions of a semi-orthogonal decomposition,
which allows us to prove Theorem~\ref{t-main-categorical-intro}.
In Section~\ref{sec:applications}, we deduce the geometric version of the conservative descent theorem,
i.e., Theorem~\ref{t:main-geometric-intro}, and use this result to establish the semi-orthogonal decompositions
appearing in Theorem~\ref{t:applications}.

\subsection{Acknowledgments}
We thank Dustin Clausen for making us aware of Thomason's results
mentioned in Remark~\ref{rem:thomason}.
We also thank an anonymous referee for useful comments.
The first named author was partially supported by the Danish National
Research Foundation through the Niels Bohr Professorship of Lars Hesselholt,
by the Max Planck Institute for Mathematics in Bonn,
and by the DFG through SFB/TR 45.
The second named author was supported by the DFG through a postdoctoral fellowship and through SFB/TR 45.


\section{Preliminaries}
\label{sec:preliminaries}
We use the definition of algebraic space and algebraic stack given in the stacks
project~\cite[\sptag{025Y}, \sptag{026O}]{stacks-project}.
In particular, we do not assume that algebraic stacks be quasi-separated or have separated diagonals.
Algebraic stacks form a 2-category.
However, we follow the common practice to suppress 2-categorical details from the language and the notation.
For instance,
we usually simply write
\emph{commutative diagram}
or \emph{cartesian diagram}
instead of 2-commutative diagram or
2-cartesian diagram.

\subsection{Derived categories of algebraic stacks}
Given an algebraic stack $X$,
we consider its \emph{derived category}
$\D_\qc(X)$.
There are several approaches to constructing this category in the literature.
We briefly recall the one taken in \cite{lmb2000}.
The $\mathcal{O}_X$-modules in the \emph{lisse-étale}
topos $X_\liset$ form a Grothendieck abelian category.
We denote its (unbounded) derived category by $\D(X_\liset, \mathcal{O}_X)$.
Now $\D_\qc(X)$ is defined as the full subcategory of $\D(X_\liset, \mathcal{O}_X)$
whose objects are complexes with quasi-coherent cohomology
sheaves.

Recall that the category $\D_\qc(X)$ has the structure of a closed symmetric monoidal category,
whose operations we denote by
\begin{equation}
\label{eq-monoidal-ops}
-\otimes-,
\qquad
\sheafHom(-, -).
\end{equation}
Given an arbitrary morphism $f\colon X \to Y$ of algebraic stacks,
we get an induced adjoint pair of functors
\begin{equation}
\label{eq-derived-pushes}
f^*\colon\D_\qc(Y) \to \D_\qc(X), \qquad  f_*\colon\D_\qc(X) \to \D_\qc(Y).
\end{equation}
The precise construction of these functors is somewhat technical.
We refer to \cite[Section~1]{hr2017} for a detailed discussion.

\subsection{Concentrated morphisms}
It should be emphasized that even though the functors \eqref{eq-derived-pushes} exist in the generality
stated above,
they are not necessarily well behaved without further assumptions.
The situation becomes better if $f$ is assumed to be \emph{concentrated},
as defined by Hall--Rydh {\cite[Definition~2.4]{hr2017}}.
We recall the definition here.

\begin{definition}
\label{def-concentrated}
An algebraic stack $X$ is \emph{concentrated} if it is quasi-compact, quasi-separated
and has finite cohomological dimension.
Recall that an algebraic stack has finite cohomological dimension provided that there exists an integer $d$
such that for any quasi-coherent sheaf $\mathcal{F}$ of $\mathcal{O}_X$-modules,
we have $\HH^i(X, \mathcal{F}) = 0$ for every $i > d$.
A morphism $f\colon X \to Y$ is \emph{concentrated} if for any cartesian diagram
as in \eqref{eq-base-change} with $Y'$ affine, the stack $X'$ is concentrated.
\end{definition}

\begin{remark}
\label{rem-conc-base-change}
The property of a morphism of being concentrated is preserved under arbitrary base change and can be verified after a faithfully flat base change.
Moreover, an algebraic stack $X$ is concentrated if and only if it is concentrated over $\Spec \mathbb{Z}$.
This follows from \cite[Lemma~2.5]{hr2017}.
\end{remark}

\begin{remark}
\label{rem-conc-stacks}
Let $X$ be a quasi-compact and quasi-separated algebraic stack.
Assume furthermore that $X$ has finitely presented inertia and affine stabilizers.
Then $X$ is concentrated if and only if all stabilizers at points of positive characteristic are linearly
reductive.
This follows from a slightly more general result by Hall and Rydh~\cite[Theorem~C]{hr2015}.
\end{remark}

\begin{remark}
\label{rem-conc-representable}
A scheme or an algebraic space is concentrated if and only if it is quasi-compact and quasi-separated.
Indeed this is a special case of Remark~\ref{rem-conc-stacks}.
As a consequence, a representable morphism of algebraic stacks is
concentrated if and only if it is quasi-compact and quasi-separated. 
\end{remark}

Now let $f\colon X \to Y$ be a concentrated morphism of
algebraic stacks. 
Then the functor $f_*$ is the restriction of the corresponding derived functor
$\D(X_\liset, \mathcal{O}_X) \to\D(Y_\liset, \mathcal{O}_Y)$ (see \cite[Theorem~2.6]{hr2017}).
In this situation the functor $f_*$ has a right adjoint (\cite[Theorem~4.14]{hr2017}),
which we denote by
\begin{equation}
\label{eq-duality-functor}
f^\times \colon\D_\qc(Y) \to \D_\qc(X).
\end{equation}

\subsection{Perfect and pseudo-coherent complexes}
The theory of \emph{pseudo-coherent} and~\emph{perfect} complexes on a ringed topos
is worked out in \cite[Exposé~II]{sga6}
(see also \cite[\sptag{08G5}, \sptag{08FT}]{stacks-project} for the definitions).
In particular, these definitions apply to $(X_\liset, \mathcal{O}_X)$ when $X$ is an algebraic stack.
This gives us the following inclusions of full triangulated subcategories
\begin{equation}
\label{eq-subcats}
\D_\pf(X) \subset \D^\locbd_\pc(X) \subset \D_\pc(X) \subset \D_\qc(X),
\end{equation}
where $\D_\pf(X)$ is the subcategory of perfect complexes,
$\D_\pc(X)$ is the subcategory of pseudo-coherent complexes
and $\D^\locbd_\pc(X)$ is the subcategory of $\D_\pc(X)$
of complexes which locally have bounded cohomology.
Furthermore, the \emph{singularity category} for $X$ is defined as the Verdier quotient
\begin{equation}
\label{eq-sing}
\D_\sg(X) := \D^\locbd_\pc(X)/\D_\pf(X).
\end{equation}
\begin{remark}
The definition of perfect and pseudo-coherent complexes on a
general ringed topos is somewhat involved,
and it is sometimes convenient to instead use the following
characterization.

Let $X$ be an algebraic stack and let $\mathcal{F}$ be an
object in $\D(X_\liset, \mathcal{O}_X)$.
Then $\mathcal{F}$ is perfect (resp.~pseudo-coherent) if
and only if for any point $x \in X$ there exists a smooth neighborhood $U  \to X$ of $x$,
where $U$ is an affine scheme,
such that there exists a quasi-isomorphism $\mathcal{P} \to \mathcal{F}|_U$
where $\mathcal{P}$ is a bounded (resp.~bounded above) complex of finite locally free $\mathcal{O}_U$-modules.

Here the reduction to the local situation is obvious.
Assume that $U = \Spec R$.
The perfect (resp. pseudo-coherent) objects in $\D(\Mod(R))$ are
characterized as the complexes $M$ admitting a quasi-isomorphism
$P \to M$, where $P$ is a bounded (resp. bounded above) complex
of finitely generated projective $R$-modules (see \cite[\sptag{064U}]{stacks-project}).
Furthermore, we have an equivalence between the category of quasi-coherent modules on the ringed topos
$(U_\liset, \mathcal{O}_U)$ and the category $\Mod(R)$ of $R$-modules given by taking global sections.
This induces an equivalence
$\Rd\Gamma \colon \D_\qc(U) \to \D(\Mod(R))$ of derived categories.
We leave it as an exercise to the reader to verify that this equivalence preserves perfect and pseudo-coherent
complexes
(cf.\ \cite[\sptag{08EB}, \sptag{08E7}, \sptag{08HE}, \sptag{08HG}]{stacks-project} for the corresponding statement for the small étale topos).
\end{remark}

\begin{remark}
The category $\D^\locbd_\pc(X)$ is mostly interesting in the case when $X$ is noetherian,
in which case it coincides with the category $\D^\bd_\co(X)$ of
complexes with bounded, coherent cohomology.
\end{remark}

\begin{remark}
Note that by default, we do not use any derived decorations for the functors
\eqref{eq-monoidal-ops}, \eqref{eq-derived-pushes} and~\eqref{eq-duality-functor}.
Also, we use the same notation for the induced functors on the categories
\eqref{eq-subcats} and \eqref{eq-sing} provided that they exist.
The precise meaning of the symbols $\otimes, \sheafHom, f^*, f_*, f^\times$
should always be clear from the context.
\end{remark}


\section{Relative Fourier--Mukai transforms}
\label{sec:fourier-mukai}
In this section, we discuss Fourier--Mukai transforms in a relative setting.
The definition is a straightforward generalization of the usual concept,
but involves some technical conditions in order to ensure that such transforms
behave well under base change and that the induced functors preserve
perfect and locally bounded pseudo-coherent complexes.
The required conditions are fairly well understood if we restrict the discussion to 
quasi-compact and quasi-separated schemes,
but the situation for algebraic stacks seems to be more complicated.
We do not investigate these conditions systematically in this article.
Instead our strategy is to postulate the properties we require in Definition~\ref{def-properties}
and to prove that the types of morphism appearing in our applications satisfy these properties in
Proposition~\ref{prop-preserving}.

Consider a cartesian square
\begin{equation}
\label{eq-base-change}
\xymatrix{
X' \ar[r]^-v \ar[d]_-{g} & X \ar[d]^-f\\
Y' \ar[r]^-u & Y
}
\end{equation}
of algebraic stacks,
where, for the moment, we do not put any further conditions on the morphisms $f, g, u$ and~$v$.
The obvious isomorphism $g^*u^*\cong v^*f^*$
together with the adjunction counit $f^*f_* \to \id$ give a natural transformation
\begin{equation}
g^*u^*f_* \xrightarrow{\sim} v^*f^*f_* \to v^*. 
\end{equation}
By adjunction, we get an induced natural transformation
\begin{equation}
\label{eq-base-change-transformation}
u^*f_* \xrightarrow{} g_*v^*
\end{equation}
between functors $\D_\qc(X) \ra \D_\qc(Y')$
which we call the 
\emph{base change transformation for $f_*$ (along $u$)}. 
We are interested in situations when the base change transformation is an isomorphism.

\begin{proposition}[{\cite[Corollary~4.13]{hr2017}}]
\label{prop-concentrated-base-change}
Let $f\colon X \to Y$ be a concentrated morphism of algebraic
stacks appearing in a cartesian square \eqref{eq-base-change}.
Then the base change morphism \eqref{eq-base-change-transformation} for $f_*$ is an isomorphism
provided that $f$ and $u$ are tor-independent.
In particular, this holds if $f$ or $u$ is flat.
\end{proposition}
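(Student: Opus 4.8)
The plan is to reduce the assertion, via cohomological descent and routine localizations, to the classical tor-independent base change theorem for quasi-compact quasi-separated algebraic spaces, and then to reassemble the result, using the concentratedness of $f$ to control the homotopy limit that appears. Since the class of concentrated morphisms is stable under base change, $g$ is concentrated, so $f_*$, $g_*$ and the base change transformation $\tau\colon u^*f_* \Rightarrow g_*v^*$ are all defined; the goal is to show that $\tau$ is an isomorphism on every object of $\D_\qc(X)$.

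First I would perform the geometric reductions. An arrow in $\D_\qc(Y')$ is an isomorphism if and only if it is so smooth-locally on $Y'$, so we may assume $Y'$ affine; reducing further to $Y$ affine is a standard descent argument using that base change transformations are compatible with pasting of cartesian squares and that pullback along a faithfully flat morphism is conservative (here one disposes first of the case when $u$ is flat, which is simpler since flat pullback is $t$-exact). After these reductions $f$ is still concentrated and tor-independent of $u$, and $X$, $X'$ are concentrated algebraic stacks. Since $X$ is quasi-compact and quasi-separated, choose a smooth surjection onto $X$ from an affine scheme and let $X_\bullet\to X$ be its \v{C}ech nerve, a smooth hypercover of $X$ by qcqs algebraic spaces $X_n$ with projections $\epsilon_n\colon X_n\to X$. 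Cohomological descent on the lisse-\'etale site identifies $\id_{\D_\qc(X)}$ with $\holim_{[n]\in\Delta}\epsilon_{n*}\epsilon_n^*$, whence $f_*\mathcal F\cong\holim_{[n]\in\Delta}(f\epsilon_n)_*\epsilon_n^*\mathcal F$; base changing the hypercover along $v$ yields a smooth hypercover of $X'$ and the analogous formula for $g_*v^*\mathcal F$.

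Next I would treat each cosimplicial level separately. For fixed $n$ the morphism $f\epsilon_n\colon X_n\to Y$ is a qcqs morphism of algebraic spaces, and it is tor-independent of $u$ because $\epsilon_n$ is flat and $f$ is tor-independent of $u$; classical tor-independent base change then identifies the $n$-th terms of the two cosimplicial objects, compatibly with all structure maps. The remaining — and main — point is to interchange $u^*$ with the homotopy limit over $\Delta$, which fails for a general functor. This is precisely where concentratedness is indispensable: because $f$ is concentrated, $f_*$ (and likewise $g_*$) has finite cohomological dimension, which forces the totalization computing $f_*\mathcal F$ to agree, in any prescribed range of cohomological degrees, with a finite partial totalization $\Tot^{\leq N}$, that is, with a finite homotopy limit, and similarly for $g_*v^*\mathcal F$. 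A triangulated functor preserves finite homotopy limits, so $u^*$ commutes with each such $\Tot^{\leq N}$; letting $N$ grow shows that $\tau$ is an isomorphism. Finally, if $f$ or $u$ is flat, then $\mathcal O_X$ respectively $\mathcal O_{Y'}$ is flat over $\mathcal O_Y$, so the higher Tor sheaves vanish and $f$, $u$ are automatically tor-independent; this gives the last sentence.
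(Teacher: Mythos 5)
The paper itself gives no argument here: the proposition is quoted verbatim from Hall--Rydh \cite[Corollary~4.13]{hr2017}, so your proposal has to be measured against their proof. Your reductions (localizing on $Y'$ and $Y$, forming the \v{C}ech nerve of a smooth affine cover, and applying classical tor-independent base change levelwise, using that flatness of $\epsilon_n$ preserves tor-independence) are sound. The genuine gap is the final interchange of $u^*$ with $\holim_\Delta$, and it occurs exactly in the case the proposition is designed to cover: $u$ tor-independent from $f$ but not flat, hence possibly of infinite tor-dimension. Finite cohomological dimension does give (at least for $\mathcal F$ uniformly bounded below) that $\Tot \ra \Tot^{\le N}$ has fibre in $\D^{\ge N+a}$, so the two agree in a growing range of degrees; but $u^*=\Ld u^*$ is only right $t$-exact, and applied to an object of $\D^{\ge N+a}$ it can produce cohomology in arbitrarily low degrees when $u$ has infinite tor-dimension. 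So ``agreement in degrees $\le k_N$'' is not inherited by $u^*\Tot \ra u^*\Tot^{\le N}$, and letting $N$ grow proves nothing. Concretely, for $A=k[x]/(x^2)$ and $A'=k$, the tower $M_N=\bigoplus_{n\le N}k[-n]$ (projections as transition maps) has fibres $k[-N]\in\D^{\ge N}$, yet
\begin{equation*}
H^0\bigl((\holim_N M_N)\otimes^{\mathrm L}_A A'\bigr)=\textstyle\bigoplus_n\Tor_n^A(k,k),
\qquad
H^0\bigl(\holim_N (M_N\otimes^{\mathrm L}_A A')\bigr)=\textstyle\prod_n\Tor_n^A(k,k),
\end{equation*}
which differ; the general principle you invoke is false. (For unbounded $\mathcal F$ there is the further problem that the terms $u^*C^n$ are no longer uniformly bounded below, so convergence of $\Tot(u^*C^\bullet)$ is itself unclear; the correct passage from $\D^+$ to $\D$ uses the connective truncations $\tau^{\ge -m}\mathcal F$ and the right $t$-exactness of $u^*$, not coconnective partial totalizations.)

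This is precisely why the proofs in the literature avoid the interchange: for quasi-compact quasi-separated schemes, the Stacks Project and Lipman--Neeman reduce to the affine case by the induction principle, through finitely many Mayer--Vietoris triangles --- finite homotopy limits, which any triangulated functor preserves --- and in the affine case the claim is associativity of derived tensor products together with the identification $B\otimes^{\mathrm L}_A A'\cong B\otimes_A A'$ furnished by tor-independence; Hall--Rydh's argument for concentrated morphisms of stacks is a d\'evissage of the same flavour. Your outline does become correct if $u$ is assumed flat, or more generally of finite tor-dimension, since then $u^*$ preserves coconnectivity in the required sense; to get the full tor-independent statement you must replace the infinite totalization by a finite d\'evissage or by a verification on perfect generators.
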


Now let $f\colon X \to Y$ be a concentrated morphism of algebraic
stacks appearing in a cartesian diagram \eqref{eq-base-change},
and assume that $u$ is flat. Then
we have a natural transformation
\begin{equation}
g_*v^*f^\times \xrightarrow{\sim} u^*f_*f^\times \to u^*
\end{equation}
constructed from the inverse of the base change transformation
\eqref{eq-base-change-transformation},
which is an isomorphism by Proposition~\ref{prop-concentrated-base-change},
and  the adjunction counit $f_*f^\times \to \id$.
By adjunction, we get an induced natural transformation
\begin{equation}
\label{eq-duality-base-change}
v^*f^\times \to g^\times u^*,
\end{equation}
which we call the \emph{base change transformation for $f^\times$
(along $u$)}.
Moreover, we have a natural transformation
\begin{equation}
f_*\left(f^\times(\mathcal{O}_Y) \otimes f^*(-)\right) \xrightarrow{\sim} f_*f^\times(\mathcal{O}_Y)\otimes (-) \to \id, 
\end{equation}
where the first morphism is induced by the projection formula, which holds since $f$ is concentrated by \cite[Corollary~4.12]{hr2017},
and the second by the adjunction counit.
By adjunction, we get an induced natural transformation
\begin{equation}
\label{eq-tensor}
f^\times(\mathcal{O}_Y) \otimes f^*(-) \to f^\times.
\end{equation}

Recall that a ring homomorphism $A \to B$ is \emph{perfect} \cite[\sptag{067G}]{stacks-project}
if $B$ admits a presentation of the form
$B \cong A[x_1, \cdots, x_n] / I$,
such that $B$ is perfect as an  $A[x_1, \cdots, x_n]$-module.
This notion extends, in the usual way, to a property of morphisms of algebraic stacks
by demanding that the property be \emph{fppf} local on both the source and the target
(see \cite[\sptag{0685}]{stacks-project}
for the corresponding situation for morphisms of schemes).
\begin{definition}
\label{def-properties}
Let $f\colon X \to Y$ be a proper, perfect and concentrated morphism of algebraic stacks.
We say that $f$ has property
\begin{enumerate}[label=\bf{P\arabic*},ref=P\arabic*]
\item
\label{enum:pty-quasi-perfect}
if $f_*$ preserves perfect complexes;
\item
\label{enum:pty-quasi-proper}
if $f_*$ preserves pseudo-coherent complexes;
\item
\label{enum:pty-base-change}
if \eqref{eq-tensor} is an isomorphism and the
base change transformation \eqref{eq-duality-base-change} for $f^\times$
along any flat morphism $u$ is an isomorphism.
\end{enumerate}
Moreover, we say that $f$ has any of the properties \ref{enum:pty-quasi-perfect}--\ref{enum:pty-base-change}
\emph{uniformly} provided that for any cartesian square \eqref{eq-base-change}
with $u$ flat, the morphism $g$ has the corresponding property.
\end{definition}

We are now ready to give the definition of a relative
Fourier--Mukai transform and to discuss its basic properties.

\begin{definition}
\label{def-fourier-mukai}
Let $X$ and $Y$ be algebraic stacks over some base algebraic
stack~$S$.
A \define{Fourier--Mukai transform} $\Phi\colon X \to
Y$ \define{over $S$}
is a diagram
\begin{equation}
\label{eq-fm-diagram}
\xymatrix{
& K \ar[dl]_p\ar[dr]^q& \\
X & & Y \\
}
\end{equation}
of algebraic stacks over $S$ together with an object $\mathcal{K}$ in $\D_\pf(K)$ such that
\begin{enumerate}
\item
  \label{enum:ppc}
the morphisms $p$ and $q$ are proper, perfect and concentrated
(see Definition~\ref{def-concentrated});
\item
  \label{enum:times-perf}
the object $q^\times\mathcal{O}_Y$ is perfect;
\item
  \label{enum:p-pres-perf-pcoh}
the morphism $p$ has properties \ref{enum:pty-quasi-perfect} and~\ref{enum:pty-quasi-proper} uniformly in the sense of Definition~\ref{def-properties};
\item
  \label{enum:q-pres-perf-pcoh-fbcx}
the morphisms $q$ has properties \ref{enum:pty-quasi-perfect}--\ref{enum:pty-base-change} uniformly in the sense of Definition~\ref{def-properties}.
\end{enumerate}
\emph{Note:
If we restrict the discussion to quasi-compact and
quasi-separated schemes then 
\ref{enum:p-pres-perf-pcoh}
 and \ref{enum:q-pres-perf-pcoh-fbcx}
 are implied by the other
conditions by Remark~\ref{rem-schemes} below.
We do not know if this is true in general.}
\end{definition}

Given a Fourier--Mukai transform $\Phi = (K, p, q, \mathcal{K}) \colon X \to Y $, we get an induced functor
\begin{equation}
\label{eq-fm-functor}
\D_\qc(X) \to \D_\qc(Y), \qquad \mathcal{F} \mapsto q_*(\mathcal{K}\otimes p^*(\mathcal{F})).
\end{equation}
This functor admits a right adjoint, which is given by
\begin{equation}
\label{eq-fm-functor-right}
\D_\qc(Y) \to \D_\qc(X), \qquad \mathcal{G} \mapsto p_*(\mathcal{K}^\vee\otimes q^\times(\mathcal{G})),
\end{equation}
where $\mathcal{K}^\vee := \sheafHom(\mathcal{K}, \mathcal{O}_K)$ denotes the dual of $\mathcal{K}$.

\begin{remark}
\label{rem:fm-functor}
By abuse of language, we also say that a functor $\Phi\colon \D_\qc(X) \to \D_\qc(Y)$
isomorphic to a functor of the form \eqref{eq-fm-functor} is a Fourier--Mukai transform.
When doing so, we always fix a corresponding geometric datum
$\Phi = (K, p, q, \mathcal{K})$ which is obvious from the
definition of the functor.
Note that we use the same symbol $\Phi$ for the functor and the geometric datum.
\end{remark}

The prominent feature of relative Fourier--Mukai transforms
is that they admit a natural notion of flat base change.
More precisely, if $\Phi = (K, p, q, \mathcal{K})$ is a Fourier--Mukai transform over $S$ and $S' \to S$ is a flat morphism,
then we form the base change $\Phi_{S'} = (K', p', q', \mathcal{K}')$, where
\begin{equation}
\xymatrix{
& K' \ar[dl]_-{p'}\ar[dr]^-{q'}& \\
X' & & Y' \\
}
\end{equation}
is the base change of the diagram \eqref{eq-fm-diagram} along $S' \to S$,
and $\mathcal{K}'$ is the pull-back of $\mathcal{K}$ along the induced morphism $K' \to K$.
The conditions on the morphisms $p$ and $q$ assert that $\Phi'$
is again a Fourier--Mukai transform.

\begin{proposition}
\label{prop-fm-restriction}
Let $X, Y$ be algebraic stacks over an algebraic stack $S$,
and let $\Phi = (K, p, q, \mathcal{K})\colon X \to Y$ be a Fourier--Mukai transform over $S$.
Then the functors \eqref{eq-fm-functor} and~\eqref{eq-fm-functor-right}
preserve perfect and locally bounded pseudo-coherent complexes.
\end{proposition}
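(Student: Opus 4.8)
The plan is to factor each of the two functors as a composite of three elementary operations --- a derived pullback, tensoring with a perfect complex, and a derived pushforward --- and then to follow the two classes of complexes in question, perfect and locally bounded pseudo-coherent, through each of the three stages.

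Consider first the functor \eqref{eq-fm-functor}, $\mathcal{F}\mapsto q_*(\mathcal{K}\otimes p^*(\mathcal{F}))$. Derived pullback along an arbitrary morphism preserves perfect complexes and pseudo-coherent complexes; moreover, since $p$ is a perfect morphism it is of finite Tor-dimension, so $p^*$ also sends complexes with locally bounded cohomology to such. Hence $p^*(\mathcal{F})$ lies in $\D_\pf(K)$, resp.\ $\D^\locbd_\pc(K)$, whenever $\mathcal{F}$ does. Next, tensoring with $\mathcal{K}$ preserves perfectness because $\mathcal{K}$ is perfect, and preserves local boundedness because $\mathcal{K}$ has locally finite Tor-amplitude, so $\mathcal{K}\otimes p^*(\mathcal{F})$ stays in the respective class. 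For the pushforward I would then invoke the postulated properties: $q$ has \ref{enum:pty-quasi-perfect} and \ref{enum:pty-quasi-proper} uniformly, so $q_*$ preserves perfect complexes and pseudo-coherent complexes. This already settles the perfect case.

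The delicate point --- and the one I expect to require genuine care --- is that property \ref{enum:pty-quasi-proper} only refers to pseudo-coherence, so it does not by itself yield local boundedness of the pushforward. The remedy is to reduce to a quasi-compact base. Local boundedness of $q_*(\mathcal{K}\otimes p^*(\mathcal{F}))$ on $Y$ may be tested after base change along a smooth cover $U\to Y$ with $U$ affine. Writing $q_U\colon K_U\to U$ for the base change of $q$, the projection $K_U\to K$ is smooth, so the restriction of $\mathcal{K}\otimes p^*(\mathcal{F})$ to $K_U$ is again locally bounded pseudo-coherent; but $K_U$ is concentrated by Remark~\ref{rem-conc-base-change}, in particular quasi-compact, and on a quasi-compact stack a locally bounded pseudo-coherent complex is bounded (pseudo-coherence bounds its cohomology from above, and local boundedness together with finiteness of a smooth cover bounds it from below). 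Since $q_U$ is concentrated it has finite cohomological dimension, so $q_{U*}$ carries this bounded pseudo-coherent complex to a bounded complex on $U$, pseudo-coherent by \ref{enum:pty-quasi-proper} applied to $q_U$ (valid by the uniformity clause). Flat base change, i.e.\ Proposition~\ref{prop-concentrated-base-change}, identifies $q_{U*}$ of this restriction with the restriction of $q_*(\mathcal{K}\otimes p^*(\mathcal{F}))$ to $U$, and letting $U$ range over a smooth cover of $Y$ yields local boundedness.

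For the right adjoint \eqref{eq-fm-functor-right}, $\mathcal{G}\mapsto p_*(\mathcal{K}^\vee\otimes q^\times(\mathcal{G}))$, the argument is parallel once $q^\times$ is replaced by a pullback. By property \ref{enum:pty-base-change} of $q$ the natural transformation \eqref{eq-tensor} is an isomorphism, so $q^\times(\mathcal{G})\cong q^\times(\mathcal{O}_Y)\otimes q^*(\mathcal{G})$, where $q^\times(\mathcal{O}_Y)$ is perfect by the corresponding condition in Definition~\ref{def-fourier-mukai}. Since $q$ is a perfect morphism, $q^*(\mathcal{G})$ is perfect, resp.\ locally bounded pseudo-coherent, when $\mathcal{G}$ is, exactly as above, hence so is $q^\times(\mathcal{G})$; and $\mathcal{K}^\vee$ is perfect, being the dual of a perfect complex, so $\mathcal{K}^\vee\otimes q^\times(\mathcal{G})$ remains in the respective class. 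Applying $p_*$, which has \ref{enum:pty-quasi-perfect} and \ref{enum:pty-quasi-proper} uniformly, and re-running the quasi-compact-base reduction of the previous paragraph with $p$ in place of $q$, completes the proof. Apart from that reduction, everything is a bookkeeping of standard stability properties of perfect, pseudo-coherent and locally bounded complexes under pullback, tensor and pushforward.
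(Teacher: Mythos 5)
Your proof is correct and follows essentially the same route as the paper's: factor each functor as pullback, tensor with a perfect complex, and pushforward (rewriting $q^\times$ via \ref{enum:pty-base-change} and the perfectness of $q^\times(\mathcal{O}_Y)$), and track both classes of complexes through each stage using \ref{enum:pty-quasi-perfect} and \ref{enum:pty-quasi-proper}. The extra reduction you carry out to show that $q_*$ and $p_*$ preserve \emph{local boundedness} (which \ref{enum:pty-quasi-proper} alone does not give) is a detail the paper's proof passes over silently, and your argument for it --- flat base change to an affine chart, boundedness of locally bounded pseudo-coherent complexes on quasi-compact stacks, and finite cohomological dimension from concentratedness --- is sound.
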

\begin{proof}
Derived pull-backs always preserve perfect and pseudo-coherent complexes.
Since the morphisms $p$ and $q$ are assumed to be perfect, they have finite tor-dimension.
Hence $p^*$ and $q^*$ preserve locally bounded complexes.
By assumption, both $p$ and $q$ satisfy \ref{enum:pty-quasi-perfect} and~\ref{enum:pty-quasi-proper}.
Hence all the functors $p^*, q^*, p_*, q_*$ preserve perfect complexes and
locally bounded pseudo-coherent complexes.
Since also $q$ satisfies \ref{enum:pty-base-change}, we have $q^\times \cong q^\times(\mathcal{O}_Y) \otimes q^*(-)$.
By assumption $q^\times(\mathcal{O}_Y)$ is perfect, so also $q^\times$ preserves perfect complexes
and locally bounded pseudo-coherent complexes.
Now the proposition follows from the fact that the functors \eqref{eq-fm-functor} and~\eqref{eq-fm-functor-right}
are compositions of the functors $p^*, q_*, p_*, q^\times$ and
tensoring with the perfect complexes~$\mathcal{K}$ and~$\mathcal{K}^\vee$.
\end{proof}

Note that the restriction of the functor \eqref{eq-fm-functor} to the categories of perfect complexes also admits a left adjoint.
Explicitly, this is given by 
\begin{equation}
\label{eq-fm-pf-left}
\D_\pf(Y) \to \D_\pf(X), \qquad \mathcal{G} \mapsto p_\times(\mathcal{K}^\vee\otimes q^*(\mathcal{G})),
\end{equation}
where $p_\times \colon \mathcal{G} \mapsto p_*(\mathcal{G}^\vee)^\vee$ denotes the left adjoint of
$p^*\colon \D_\pf(K) \to \D_\pf(X)$ (see e.g.~\cite[Lemma~4.3]{bls2016}).

We conclude the section by discussing situations when
the conditions
\ref{enum:pty-quasi-perfect}--\ref{enum:pty-base-change}
are satisfied.

\begin{remark}
\label{rem-schemes}
For a moment, let us restrict attention to the category of quasi-compact and quasi-separated schemes.
Then the properties in Definition~\ref{def-properties} are well understood.
They are treated thoroughly by Lipman and Neeman in \cite{ln2007}.
A good overview is also given by Lipman in \cite[Section~4.7]{lipman2009}.
Here we give a brief summary of the results that are relevant to us.

Assume that $f\colon X \to Y$ is a morphism of quasi-compact and quasi-separated schemes.
In particular, such a morphism is concentrated by Remark~\ref{rem-conc-representable}.
If $f$ is proper and perfect, then it automatically satisfies all properties in Definition~\ref{def-properties}
uniformly.
Indeed, since it is proper and \emph{pseudo-coherent} it is also \emph{quasi-proper}, i.e., satisfies
\ref{enum:pty-quasi-proper} by \cite[Corollary~4.3.3.2]{lipman2009}.
Since it has finite tor-dimension, it is also
\emph{quasi-perfect},
by \cite[Theorem~1.2]{ln2007}, i.e., satisfies \ref{enum:pty-quasi-perfect},
by \cite[Proposition~2.1]{ln2007}, which also tells us that \eqref{eq-tensor} is an isomorphism.
Hence it also satisfies \ref{enum:pty-base-change} 
by~\cite[Theorem~4.7.4]{lipman2009}.
In particular, conditions \ref{enum:p-pres-perf-pcoh} and
\ref{enum:q-pres-perf-pcoh-fbcx} in
Definition~\ref{def-fourier-mukai} are redundant in this
situation.
\end{remark}

\begin{remark}
\label{rem-stacks}
The situation is less explored if we consider arbitrary concentrated morphisms $f\colon X \to Y$ of
algebraic stacks.
If $f$ is proper, perfect and representable by schemes,
then it satisfies \ref{enum:pty-quasi-perfect} and~\ref{enum:pty-quasi-proper} uniformly.
Indeed, these properties can be verified locally on the target, so this follows from Remark~\ref{rem-schemes}.
If, in addition, $f$ is finite, then also
\ref{enum:pty-base-change} holds uniformly
by~\cite[Theorem~4.14(4)]{hr2017}.
The properties in Definition~\ref{def-properties} in the context of algebraic stacks
are further explored by Neeman in the recent preprint~\cite{neeman2017}.
\end{remark}

\begin{remark}
Let us restrict the discussion to the category of smooth and projective schemes over an
algebraically closed field.
Given objects $X$, $Y$ in this category and an object $\mathcal{K}$ in $\D_\pf(X \times Y)$,
we obtain the Fourier--Mukai transform $\Phi = (X\times Y, p, q, \mathcal{K}) \colon X \to Y$,
where $p$ and $q$ are the canonical projections.
Indeed, by Remark~\ref{rem-schemes},
it is enough to verify items \ref{enum:ppc} and~\ref{enum:times-perf} of Definition~\ref{def-fourier-mukai}.
Here \ref{enum:ppc} is clear (cf.~proof of Proposition~\ref{prop-preserving} below).
Furthermore, \ref{enum:times-perf} follows from
Grothendieck--Verdier duality since $q^\times\mathcal{O}_Y \cong
q^!\mathcal{O}_Y \cong \Sigma^{\dim X}p^*\omega_X$ (see e.g.~\cite[Section~3.4]{huybrechts2006}). 
Usually the term Fourier--Mukai transform,
as defined for instance in \cite[Definition~5.1]{huybrechts2006},
refers to the induced functor $\Phi\colon \D_\pf(X) \to \D_\pf(Y)$ obtained from such a datum.
In particular, our notion is a direct generalization of the standard notion.
\end{remark}

The next proposition summarizes what we need for the applications considered in this article.

\begin{proposition}
\label{prop-preserving}
Let $f\colon X \to Y$ be a morphism of algebraic stacks.
Then $f$ is perfect, proper, concentrated and satisfies properties
\ref{enum:pty-quasi-perfect} and \ref{enum:pty-quasi-proper} uniformly in the following cases:
\begin{enumerate}
\item
  \label{enum:proj}
$f$ is a projectivized vector bundle;
\item
  \label{enum:blowup}
$f$ is a blow-up in a regular sheaf of ideals;
\item
  \label{enum:regimm}
$f$ is a regular closed immersion;
\item
  \label{enum:gerbe}
$f$ is $\mu_n$-gerbe;
\item
  \label{enum:root}
$f$ is a root stack in an effective Cartier divisor.
\end{enumerate}
Moreover, if $f$ is a regular closed immersion then $f^\times(\mathcal{O}_Y)$ is perfect and $f$ satisfies
\ref{enum:pty-base-change} uniformly.
\end{proposition}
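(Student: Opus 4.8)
\textit{Proof proposal.} The plan is to split the statement into a ``geometric'' part---$f$ proper, perfect and concentrated---and a ``homological'' part---properties \ref{enum:pty-quasi-perfect}, \ref{enum:pty-quasi-proper} (and, in the last case, also \ref{enum:pty-base-change} and perfectness of $f^\times\mathcal{O}_Y$)---and to organize the five cases along one dichotomy: in cases (a)--(c) the morphism $f$ is representable by schemes, so once the geometric properties are in place the homological ones follow from Remark~\ref{rem-stacks}; in cases (d) and (e) the morphism is not representable, and one must identify $f_*$ explicitly on a local model.

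First I would dispose of the geometric properties case by case. A projectivized vector bundle is smooth and projective; a blow-up in a regular sheaf of ideals is projective and of finite tor-dimension (as one reads off the standard affine charts of the Rees algebra); a regular closed immersion is finite, of finite presentation, and its ideal sheaf has the Koszul complex as a finite locally free resolution. Hence in each of (a)--(c), $f$ is proper, of finite tor-dimension and of finite presentation---so perfect---and it is a quasi-compact quasi-separated morphism representable by schemes, hence concentrated by Remark~\ref{rem-conc-representable}. In cases (d) and (e), $f$ is fppf-locally on $Y$ of the form $[\Spec B/\mu_m]\to\Spec A$ with $B$ a finite locally free $\mathcal{O}_Y$-algebra carrying an $\mathcal{O}_Y$-linear $\mu_m$-action: one takes $B=A$ with trivial action for a $\mu_n$-gerbe, and $B=A[s]/(s^r-t)$ with $E=V(t)$ for the root stack. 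From this description $f$ is proper (the induced map of underlying topological spaces is closed, universally), its stabilizers are finite, hence linearly reductive, and its inertia is finitely presented, so $f$ is concentrated by Remark~\ref{rem-conc-stacks} applied after such a cover (concentratedness being checkable fppf-locally); and $f$ is perfect, since pulling it back along the fppf cover $\Spec B\to[\Spec B/\mu_m]$ yields the finite flat---hence perfect---morphism $\Spec B\to\Spec A$, and perfectness is fppf-local on source and target.

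For the homological part, cases (a)--(c) are exactly the hypotheses of Remark~\ref{rem-stacks}: $f$ is proper, perfect and representable by schemes, so it satisfies \ref{enum:pty-quasi-perfect} and \ref{enum:pty-quasi-proper} uniformly, and in case (c), where $f$ is moreover finite, Remark~\ref{rem-stacks} also yields \ref{enum:pty-base-change} uniformly. For cases (d) and (e) I would first observe that the flat base change of a $\mu_n$-gerbe, resp.\ of a root stack in an effective Cartier divisor, is again of the same type, so it suffices to establish \ref{enum:pty-quasi-perfect} and \ref{enum:pty-quasi-proper} (without ``uniformly'') for each such $f$. Since $f$ is concentrated the base change transformation for $f_*$ is an isomorphism along flat morphisms (Proposition~\ref{prop-concentrated-base-change}), while being perfect, resp.\ pseudo-coherent, may be checked fppf-locally on the target, so one passes to the local model $f\colon[\Spec B/\mu_m]\to\Spec A=\Spec B^{\mu_m}$. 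There $\Qcoh([\Spec B/\mu_m])$ is the category of $\mathbb{Z}/m$-graded $B$-modules and $f_*$ is the exact functor extracting the weight-zero piece, viewed as an $A$-module; since each homogeneous piece of $B$ is finite locally free over $A$, this functor carries finite locally free graded $B$-modules to finite locally free $A$-modules, hence preserves perfect and pseudo-coherent complexes. Finally, for the last assertion, perfectness of $i^\times\mathcal{O}_Y$ for a regular closed immersion $i\colon Z\hookrightarrow Y$ is a local computation: $i$ being finite, $i_*i^\times\mathcal{O}_Y\cong\sheafHom_{\mathcal{O}_Y}(i_*\mathcal{O}_Z,\mathcal{O}_Y)$, and with $Z=V(f_1,\dots,f_c)$ in an affine $Y$ this is the $\mathcal{O}_Y$-linear dual of the Koszul complex on $f_1,\dots,f_c$, quasi-isomorphic to a line bundle on $Z$ placed in a single degree; in particular $i^\times\mathcal{O}_Y$ is perfect.

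The hard part is the treatment of the two non-representable cases (d) and (e): Remark~\ref{rem-stacks} does not apply there, so one genuinely has to compute $f_*$ on a local model and check by hand that it preserves perfect and pseudo-coherent complexes. What makes the reduction to a local model legitimate is precisely the conjunction of $f$ being concentrated---so that flat base change for $f_*$ holds---with the fppf-locality on the target of perfectness and pseudo-coherence; establishing these facts for gerbes and root stacks, together with the finite tor-dimension of a blow-up in a regular centre, is where the actual work of the proof is concentrated.
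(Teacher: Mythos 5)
Your proposal is correct and follows essentially the same strategy as the paper: geometric properties first, then Remark~\ref{rem-stacks} for the representable cases (a)--(c) and for \ref{enum:pty-base-change} in the finite case (c), and a reduction to an explicit local model for the non-representable cases (d) and (e). The only real divergence is that where the paper cites \cite[Lemmas~4.1 and~4.5]{bls2016} for perfectness of $f^\times(\mathcal{O}_Y)$ and for property \ref{enum:pty-quasi-perfect} of gerbes and root stacks, you inline the underlying computations (the self-dual Koszul complex, and $f_*$ as extraction of the weight-zero piece of a $\mathbb{Z}/m$-graded $B$-module); this makes the argument more self-contained at no real cost. One step you elide, which the paper handles explicitly via its ``not necessarily commutative ring $R$'' remark: to conclude from ``$f_*$ sends finite locally free graded $B$-modules to finite projective $A$-modules'' that $f_*$ preserves perfect and pseudo-coherent complexes, you need to know that on the local model $[\Spec B/\rmu_m]$ every perfect (resp.\ pseudo-coherent) complex is \emph{globally} quasi-isomorphic to a bounded (resp.\ bounded above) complex of finite locally free graded modules --- a further smooth localization would ruin the computation of $f_*$. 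This holds because $\Qcoh([\Spec B/\rmu_m])$ is the module category of the skew group algebra, for which the usual resolution argument (cf.\ \cite[\sptag{068R}]{stacks-project}) goes through verbatim; you should say this, and also use exactness (boundedness) of $f_*$ to pass from bounded above complexes to pseudo-coherence of the image, as the paper does.
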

\begin{remark}
\label{rem-regular-immersion}
We follow the definition of \emph{regular immersion} given in SGA6 \cite[Exposé~VII, Définition~1.4]{sga6}.
This is what is called a \define{Koszul-regular immersion} in the stacks project \cite[\sptag{0638}]{stacks-project}.
A \emph{regular sheaf of ideals} is simply an ideal sheaf corresponding to a regular closed immersion.
The property of being a regular immersion is local on the target for the \emph{fpqc}
topology~\cite[Exposé~VII, Proposition~1.5]{sga6}.
In particular, the definition automatically extends to algebraic stacks.
\end{remark}
\begin{proof}[Proof of Proposition~\ref{prop-preserving}]
The morphism $f$ is clearly proper and of finite presentation in all the cases \ref{enum:proj}--\ref{enum:root}.
It is also perfect by flatness and by being of finite
presentation in the cases \ref{enum:proj}, \ref{enum:gerbe} and~\ref{enum:root} and
by \cite[Exposé VII, Proposition~1.9]{sga6} in the cases \ref{enum:blowup} and~\ref{enum:regimm}.
Moreover, $f$ is concentrated by Remark~\ref{rem-conc-base-change}, \ref{rem-conc-stacks} and~\ref{rem-conc-representable}.

In the cases \ref{enum:proj}--\ref{enum:regimm},
$f$ is representable by schemes,
so \ref{enum:pty-quasi-perfect} and \ref{enum:pty-quasi-proper} hold uniformly by Remark~\ref{rem-stacks}.
Assume that we are in one of the cases \ref{enum:gerbe} or~\ref{enum:root}.
The push-forward $f_*$ preserves perfects by \cite[Lemma~4.5]{bls2016},
so $f$ has property~\ref{enum:pty-quasi-perfect}.
Furthermore, from the proof of the same lemma,
we may, after an appropriate base change, assume that 
the category of quasi-coherent $\mathcal{O}_X$-modules is equivalent
to the category of $R$-modules for some not necessarily commutative
ring $R$.
Hence any pseudo-coherent complex of $\mathcal{O}_X$-modules is quasi-isomorphic
to a bounded above complex of finite locally free $\mathcal{O}_X$-modules.
Indeed, the proof for this is identical to the proof in the case when $R$ is commutative
(cf.~\cite[\sptag{068R}]{stacks-project}).
Since $f_*$ is bounded and preserves perfect complexes,
this implies that $f$ has property~\ref{enum:pty-quasi-proper}.
Hence $f$ satisfies property \ref{enum:pty-quasi-perfect} and~\ref{enum:pty-quasi-proper} uniformly,
since the property of being a $\mu_n$-gerbe or a root stack in a Cartier divisor
is stable under flat base change.

Finally, assume that $f$ is a regular closed immersion.
Then $f$ is finite so \ref{enum:pty-base-change} is satisfied
uniformly by Remark~\ref{rem-stacks}
and $f^\times(\mathcal{O}_Y)$ is perfect by~\cite[Lemma~4.1]{bls2016}.
\end{proof}


\section{Mates and idempotent comonads}
\label{sec:idemp-comon-mates}

The base change transformations \eqref{eq-base-change-transformation}
and \eqref{eq-duality-base-change} are instances of what is
called a \emph{mate} in category theory. We recall some aspects of the
calculus of mates in \ref{sec:mates} and use
this theory
in \ref{sec:four-mukai-transf}
to show
that Fourier--Mukai transforms
and their right adjoints respect flat base change in a certain sense.
In \ref{sec:cons-desc-fully} we prove \emph{conservative descent for
fully faithfulness} and deduce that fully faithfulness of
Fourier--Mukai transforms can be tested after a faithfully flat
base change. 

The essential image of a fully faithful Fourier--Mukai transform
is a \emph{right admissible subcategory},
and hence the induced projection functor onto this subcategory is an example of what is
called an
\emph{idempotent comonad}. If a Fourier--Mukai transform and its flat
base change are both fully faithful, their induced idempotent
comonads are \emph{compatible}. This result is a consequence of 
a general categorical result established in 
\ref{sec:comp-from-mates} after the necessary prerequisites on 
idempotent comonads (and monads) and their compatibilities are
explained in 
\ref{sec:comon-idemp-comon},
\ref{sec:comp-idemp-comon} and
\ref{sec:monads}.

The general results in this section are formulated for an
arbitrary fixed 2-category.  The words ``object'',
``1-morphism'', ``2-morphism'' refer to this 2-category. We use
the symbols $\ra$ and $\Ra$ for 1-morphisms and 2-morphisms, 
respectively. The geometric examples take place in the 2-category
of triangulated categories.

\subsection{Mates with geometric examples}
\label{sec:mates}

For the convenience of the reader, we recall some results on
mates from
\cite[\S 2]{kelly-street-review-2-categories}.
We frequently consider the following situation.

\begin{setting}
  \label{set:two-adjunctions}
  Let
  \begin{equation}
    \label{eq:two-adjunctions}
    \begin{tikzpicture}[baseline=(current bounding box.center),
      description/.style={fill=white,inner sep=2pt},
      natural/.style={double,double equal sign distance,-implies} ]
      
      \matrix (m) [matrix of math nodes, row sep=4em,
      column sep=4em, text height=1.5ex, text depth=0.25ex]
      { \mathcal{C}' & \mathcal{C}\\
        \mathcal{D}' & \mathcal{D}\\};
      \path[->,font=\scriptsize]
      (m-1-2) edge node[above] {$F$} (m-1-1)
      (m-2-2) edge node[below] {$G$} (m-2-1)
      (m-2-1.120) edge node[left] {$L'$} (m-1-1.-120)
      (m-1-1.-80) edge node[right] {$R'$} (m-2-1.80)
      (m-2-2.120) edge node[left] {$L$} (m-1-2.-120)
      (m-1-2.-80) edge node[right] {$R$} (m-2-2.80);
    \end{tikzpicture}
  \end{equation}
  be a diagram of objects and 1-morphisms and let
  $(L', R', \eta', \epsilon')$ 
  and
  $(L, R, \eta, \epsilon)$ be adjunctions where $\eta \colon
  \id_\mathcal{D} 
  \Ra RL$ and
  $\eta'$ are the unit 2-morphisms 
  and $\epsilon \colon LR \Ra
  \id_\mathcal{C}$ and $\epsilon'$ are the counit 2-morphisms.
\end{setting}

\begin{lemma}
  [{\cite[Proposition~2.1]{kelly-street-review-2-categories}}]
  \label{l:mates}
  In Setting~\ref{set:two-adjunctions}
  there is canonical bijection
  \begin{equation*}
    \{\text{2-morphisms $\alpha \colon L'G \Ra FL$}\} 
    \sira
    \{\text{2-morphisms $\beta \colon GR \Ra R'F$}\}
  \end{equation*}
  sending a 2-morphism $\alpha \colon L'G \Ra FL$ to 
  the 2-morphism $\ol{\alpha}$ defined in \eqref{eq:mate} below.
\end{lemma}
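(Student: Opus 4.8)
The plan is to exhibit both directions of the asserted bijection as explicit pastings and then to verify that they are mutually inverse by a short diagram chase that uses only the interchange law and the triangle identities of the two adjunctions.

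First I would make the correspondence explicit. Given a 2-morphism $\alpha \colon L'G \Ra FL$, define $\ol{\alpha} \colon GR \Ra R'F$ to be the vertical composite
\begin{equation}
  \label{eq:mate}
  \ol{\alpha} \colon\quad GR \xRa{\eta' GR} R'L'GR \xRa{R'\alpha R} R'FLR \xRa{R'F\epsilon} R'F,
\end{equation}
that is, whisker the unit $\eta'$, the 2-morphism $\alpha$ and the counit $\epsilon$ by the indicated 1-morphisms and compose. Symmetrically, given $\beta \colon GR \Ra R'F$, put
\[
  \widetilde{\beta} \colon\quad L'G \xRa{L'G\eta} L'GRL \xRa{L'\beta L} L'R'FL \xRa{\epsilon' FL} FL.
\]
Both assignments $\alpha \mapsto \ol{\alpha}$ and $\beta \mapsto \widetilde{\beta}$ are manifestly well defined, so all that remains is to show that they are inverse to one another.

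For this I would compute $\widetilde{\ol{\alpha}}$. Substituting the definition of $\ol{\alpha}$ into that of $\widetilde{(-)}$ yields the vertical composite of five whiskered 2-morphisms
\[
  (\epsilon' FL) \circ (L'R'F\epsilon L) \circ (L'R'\alpha RL) \circ (L'\eta' GRL) \circ (L'G\eta).
\]
Two applications of the interchange law, sliding the copy of $\epsilon'$ past the whiskered copies of $F\epsilon$ and of $\alpha$ (this is just naturality of $\epsilon'$), rewrite this as
\[
  (F\epsilon L) \circ (\alpha RL) \circ (\epsilon' L'GRL) \circ (L'\eta' GRL) \circ (L'G\eta).
\]
The middle two factors collapse because $(\epsilon' L') \circ (L'\eta') = \id_{L'}$ is a triangle identity of $(L', R', \eta', \epsilon')$; one more interchange turns $(\alpha RL) \circ (L'G\eta)$ into $(FL\eta) \circ \alpha$, and the remaining pair collapses by the triangle identity $(\epsilon L) \circ (L\eta) = \id_L$ of $(L, R, \eta, \epsilon)$. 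Hence $\widetilde{\ol{\alpha}} = \alpha$. The identity $\ol{\widetilde{\beta}} = \beta$ follows from the mirror-image computation, which in the same way reduces to the two remaining triangle identities $(R'\epsilon') \circ (\eta' R') = \id_{R'}$ and $(R\epsilon) \circ (\eta R) = \id_R$.

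The only real difficulty here is the bookkeeping: keeping track of which 1-morphisms each 2-morphism is whiskered by, and applying the interchange law in the right order so that the triangle identities become applicable. This is completely transparent in the graphical calculus of string diagrams, where each cancellation is the straightening of a zig-zag, so in practice I would carry out the verification pictorially and then transcribe it into the formulas above. Canonicity of the bijection, i.e.\ its compatibility with composition of the ambient 2-categorical data, is then automatic, since both assignments are defined purely by pasting and pasting is functorial.
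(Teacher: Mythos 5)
Your proposal is correct and follows exactly the paper's approach: the same formula for $\ol{\alpha}$, the same candidate inverse $\beta \mapsto \widetilde{\beta}$, and the same reliance on the triangle identities to see that the two assignments are mutually inverse. The only difference is that you carry out the interchange-law/triangle-identity computation explicitly, whereas the paper (following Kelly--Street) simply asserts that the triangle identities yield the inverse; your verification is accurate.
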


\begin{definition}
  \label{d:mate}
  We call $\ol{\alpha}$ the \define{mate} of $\alpha$, in the
  notation of Lemma~\ref{l:mates}.
\end{definition}

The 2-morphisms in this lemma are illustrated by the following
diagrams.
\begin{equation}
  \begin{tikzpicture}[baseline=(current bounding box.center),
    description/.style={fill=white,inner sep=2pt},
    natural/.style={double,double equal sign distance,-implies} ]
    
    \matrix (m) [matrix of math nodes, row sep=4em,
    column sep=4em, text height=1.5ex, text depth=0.25ex]
    { \mathcal{C}' & \mathcal{C}\\
      \mathcal{D}' & \mathcal{D}\\};
    \path[->,font=\scriptsize]
    (m-1-2) edge node[above] {$F$} (m-1-1)
    (m-2-2) edge node[below] {$G$} (m-2-1)
    (m-2-1) edge node[left] {$L'$} (m-1-1)
    (m-2-2) edge node[right] {$L$} (m-1-2)
    
    (m-2-1) edge[natural,shorten >=0.5em,shorten <=0.5em]
      node[pos=0.5, above left] {$\alpha$} (m-1-2); 
  \end{tikzpicture}
  \hspace{2em}
  \begin{tikzpicture}[baseline=(current bounding box.center),
    description/.style={fill=white,inner sep=2pt},
    natural/.style={double,double equal sign distance,-implies} ]
    
    \matrix (m) [matrix of math nodes, row sep=4em,
    column sep=4em, text height=1.5ex, text depth=0.25ex]
    { \mathcal{C}' & \mathcal{C}\\
      \mathcal{D}' & \mathcal{D}\\};
    \path[->,font=\scriptsize]
    (m-1-2) edge node[above] {$F$} (m-1-1)
    (m-2-2) edge node[below] {$G$} (m-2-1)
    (m-1-1) edge node[left] {$R'$} (m-2-1)
    (m-1-2) edge node[right] {$R$} (m-2-2)
    
    (m-2-2) edge[natural,shorten >=0.5em,shorten <=0.5em]
    node[pos=0.5, above right]{$\beta$} (m-1-1); 
  \end{tikzpicture}
\end{equation}

\begin{proof}
  Let $\alpha \colon L'G \Ra FL$ be given.
  The left diagram above can be expanded to the following diagram.
  \begin{equation}
    \begin{tikzpicture}[baseline=(current bounding box.center),
      description/.style={fill=white,inner sep=2pt},
      natural/.style={double,double equal sign distance,-implies} ]
      
      \matrix (m) [matrix of math nodes, row sep=4em,
      column sep=2em, text height=1.5ex, text depth=0.25ex]
      {          & \mathcal{C}' && \mathcal{C} && \mathcal{C}\\
        \mathcal{D}' && \mathcal{D}' && \mathcal{D}\\};
      \path[->,font=\scriptsize]
      (m-1-4) edge node[above]{$F$} node[pos=0.18, below](end){} (m-1-2)
      (m-2-5) edge node[below]{$G$} node[pos=0.82, above](start){} (m-2-3)
      (m-2-3) edge node[right] {$L'$} (m-1-2)
      (m-2-5) edge node[left] {$L$} (m-1-4)
      (m-1-2) edge node[left] {$R'$} (m-2-1)
      (m-1-6) edge node[right] {$R$} (m-2-5)

      (m-2-3) edge node[below]{$\id$} node[above](gleichD'){} (m-2-1)
      (m-1-6) edge node[above]{$\id$} node[below](gleichC){} (m-1-4);
            
      \path[->,font=\scriptsize]
      (start) edge[natural]
      node[right] {$\alpha$} (end)
      (gleichD') edge[natural,shorten >=0.2em]
      node[pos=0.4, right] {$\eta'$} (m-1-2) 
      (m-2-5) edge[natural,shorten <=0.2em]
      node[pos=0.6, right] {$\epsilon$} (gleichC); 
    \end{tikzpicture}
  \end{equation}
  Define the 2-morphism $\ol{\alpha}$ as the composition
  \begin{equation}
    \label{eq:mate}
    \ol{\alpha} := (R'F\epsilon) \circ (R'\alpha R) \circ
    (\eta'GR) \colon GR \xRa{\eta' GR} R'L'GR \xRa{R'\alpha R}
    R'FLR \xRa{R'F 
      \epsilon} R'F.
  \end{equation}
  Similarly, we associate to each 2-morphism $\beta \colon
  GR \Ra R'F$ the following composition of 2-morphisms 
  \begin{equation}
    \label{eq:inverse-mate}
    L'G \xRa{L'G\eta} L'GRL \xRa{L'\beta L} 
    L'R'FL \xRa{\epsilon' FL} FL.
  \end{equation}
  The triangle identities satisfied by the adjunction data show
  that this map is inverse to the map $\alpha \mapsto
  \ol{\alpha}$.
\end{proof}

\begin{example}
  \label{eg:base-changes-*-times}
  Given a cartesian diagram
  \eqref{eq-base-change} of algebraic stacks
  consider the obvious 2-isomorphism $\alpha \colon g^*u^* \siRa
  v^*f^*$. Its mate $\ol{\alpha} \colon u^*f_* \Ra g_*v^*$
  (with respect to the adjunctions $(f^*, f_*)$ and $(g^*, g_*)$)
  is the 
  base change transformation 
  \eqref{eq-base-change-transformation}
  for~$f_*$.

  Assume that $f$ is concentrated.
  Then the mate $\ol{\alpha}$ has an inverse $\beta:= \ol{\alpha}^{-1}$,
  and we have adjunctions $(f_*, f^\times)$ and $(g_*, g^\times)$.
  We obtain the mate $\ol{\beta} \colon v^* f^\times
  \Ra g^\times u^*$, which is the 
  base change transformation 
  \eqref{eq-duality-base-change}
  for~$f^\times$.
\end{example}

\begin{example}
  \label{eg:tensor-perfect-cx}
  Let $u \colon X' \ra X$ be a morphism of algebraic stacks and 
  let $\mathcal{K} \in \D_\qc(X)$.
  Consider the obvious 2-isomorphism
  \begin{equation}
    \label{eq:lambda}
    \lambda \colon u^*\mathcal{K} \otimes u^*(-)
    \xRa{\sim}
    u^*(\mathcal{K} \otimes -)
  \end{equation}
  between triangulated functors $\D_\qc(X) \ra \D_\qc(X')$.
  Form its mate
  \begin{equation}
    \label{eq:lambda-mate}
    u^* \sheafHom(\mathcal{K}, -)
    \Ra \sheafHom(u^*\mathcal{K}, u^*(-))
    \colon \D_\qc(X) \ra \D_\qc(X')
  \end{equation}
  with respect to the adjunctions
  $(\mathcal{K} \otimes -, \sheafHom(\mathcal{K},-))$ and
  $(u^*\mathcal{K} \otimes -, \sheafHom(u^*\mathcal{K},-))$.

  Assume that $\mathcal{K}$ is perfect.
  Then it is straightforward to verify that the mate \eqref{eq:lambda-mate}
  is a 2-isomorphism.
  Indeed, this follows by a standard reduction to the affine case,
  using the fact that $\sheafHom(\mathcal{K},-)$ coincides with
  $\Rd\sheafHom_{\mathcal{O}_X}(\mathcal{K},-)$ on $\D(X_\liset, \mathcal{O}_X)$
  when $\mathcal{K}$ is perfect (see~\cite[Lemma~4.3]{hr2017}).
  Alternatively, we can use the adjunctions
  $(\mathcal{K} \otimes -, \mathcal{K}^\vee \otimes -)$ and
  $(u^*\mathcal{K} \otimes -, (u^*\mathcal{K})^\vee) \otimes -)$ 
  and obtain as the mate of $\lambda$ the 2-isomorphism
  \begin{equation}
    \label{eq:lambda-mate-dual}
    \ol{\lambda} \colon u^*(\mathcal{K}^\vee \otimes -)
    \siRa (u^*\mathcal{K})^\vee \otimes u^*(-).
  \end{equation}
\end{example}

The following technical lemma on mates is used in the proofs of
Proposition~\ref{p:conservative-descent-ff} and 
Lemma~\ref{l:idempotent-comonads-compatible}.
The reader may ignore it for now.

\begin{lemma}
  \label{l:counits-compatible-precursor}
  In Setting~\ref{set:two-adjunctions}
  let $\alpha \colon L'G \Ra FL$ be a
  2-morphism and let 
  $\ol{\alpha} \colon GR \Ra R'F$ be its mate.
  Then 
  the diagrams
  \begin{equation}
    \label{eq:alphaR-L'olalpha}
    \xymatrix{
      & {FLR} \ar@{=>}[dl]_-{F\epsilon} &&&
      {L'GR} \ar@{=>}[lll]_-{\alpha R}\\
      {F} &&
      {L'R'FLR}
      \ar@{=>}[ul]_-{\epsilon'FLR}
      \ar@{=>}[dl]_-{L'R'F\epsilon} &
      {L'R'L'GR} 
      \ar@{=>}[l]_-{L'R'\alpha R} 
      \ar@{=>}[ur]^-{\epsilon'L'GR} \\
      &
      {L'R'F} \ar@{=>}[ul]_-{\epsilon' F} &&&
      {L'GR} 
      \ar@{=>}^-{\id_{L'GR}}[uu]
      \ar@{=>}[lu]^-{L'\eta'GR} 
      \ar@{=>}[lll]_-{L'\ol{\alpha}}
    }
  \end{equation}
  and
  \begin{equation}
    \label{eq:unit-compatible-precursor}
    \xymatrix{
      & {GRL} 
      \ar@{=>}[rrrr]^-{\ol{\alpha}L} 
      \ar@{=>}[rd]^-{\eta'GRL} &&&& 
      {R'FL} 
      \ar@{=>}[ld]_-{\eta'R'FL}
      \ar@{=>}[dd]_-{\id_{R'FL}}\\
      {G} \ar@{=>}[ur]^-{G\eta} \ar@{=>}[dr]^-{\eta'G} && 
      {R'L'GRL} \ar@{=>}[rr]^-{R'L'\ol{\alpha}L} && 
      {R'L'R'FL} \ar@{=>}[rd]_-{R'\epsilon'FL}\\
      & {R'L'G} 
      \ar@{=>}[ru]^-{R'L'G\eta} 
      \ar@{=>}[rrrr]^-{R'\alpha}
      &&&& {R'FL}
    }
  \end{equation}
  are commutative.
\end{lemma}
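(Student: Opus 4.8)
The plan is to verify commutativity of both diagrams by unwinding the definition of the mate $\ol{\alpha}$ in \eqref{eq:mate} and repeatedly applying the triangle identities for the two adjunctions $(L',R',\eta',\epsilon')$ and $(L,R,\eta,\epsilon)$ together with interchange (the middle-four exchange law for 2-morphisms). No new idea is needed beyond the calculus of mates recalled in \ref{sec:mates}; the content is entirely a pasting-diagram bookkeeping exercise. I would treat the two diagrams separately but in parallel.

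For \eqref{eq:alphaR-L'olalpha}: I would first observe that the square in the lower-right of the diagram — the one with vertices $L'GR$, $L'R'L'GR$, $L'R'FLR$ and whose composite along the bottom is $L'\ol{\alpha}$ — commutes essentially by definition: expanding $L'\ol{\alpha}$ via \eqref{eq:mate} gives $L'\ol{\alpha} = (L'R'F\epsilon)\circ(L'R'\alpha R)\circ(L'\eta'GR)$, so that leg of the diagram is literally the factorization being asserted. It then remains to check that the outer region — composing $\alpha R$ with $F\epsilon$ against the long way around through $\epsilon'$ — commutes, and this is exactly the statement that $\epsilon'$ is natural with respect to $\alpha R \colon L'GR \Ra FLR$ (so the square with $\epsilon'L'GR$, $\epsilon'FLR$ on two sides commutes by naturality of $\epsilon'$), combined with the triangle identity $(\epsilon' F)\circ(L'\eta' \cdot)=\id$ applied after whiskering by $GR$; concretely, the triangle $L'GR \xRa{L'\eta'GR} L'R'L'GR \xRa{\epsilon'L'GR} L'GR$ is the identity. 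Pasting these two commuting pieces yields the whole diagram.

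For \eqref{eq:unit-compatible-precursor}: I would similarly note that the right-hand triangle built from $\eta'$ and the triangle identity $(R'\epsilon')\circ(\eta' R')=\id$ (whiskered by $FL$) gives that $R'FL \xRa{\eta'R'FL} R'L'R'FL \xRa{R'\epsilon'FL} R'FL$ is the identity, so the rightmost cell commutes. The middle square, with $R'L'\ol{\alpha}L$ across the bottom, commutes by naturality of the 2-morphism $\eta' G \Ra R'L'G$ applied to $\ol{\alpha}L$, or more directly because $R'L'(-)$ is a functor on 2-morphisms. Finally the leftmost triangle, asserting $(R'\alpha)\circ(R'L'G\eta)\circ(\eta'G) = (\ol{\alpha}L)\circ(G\eta)$ up to the identifications already made, is again just \eqref{eq:mate} read off after whiskering: $\ol{\alpha}L \circ G\eta = (R'F\epsilon L)\circ(R'\alpha RL)\circ(\eta'GRL)\circ(G\eta)$, and one uses naturality of $\eta'$ against $G\eta \colon G \Ra GRL$ to slide $\eta'$ past, then the triangle identity $(R'F\epsilon L)\circ(R'F L\eta)$-type simplification — here in the form $(F\epsilon L)\circ(FL\eta)=\id$ whiskered by $R'$ — to collapse the $R$–$L$ zig-zag. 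Assembling the three cells gives the claim.

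The main obstacle, such as it is, is purely organizational: keeping the whiskering data and the order of horizontal versus vertical composition straight, since every arrow in both diagrams is a whiskered instance of $\alpha$, $\ol\alpha$, $\eta$, $\eta'$, $\epsilon$, or $\epsilon'$, and the interchange law must be invoked at each step to move a 2-morphism past a 1-morphism it is whiskered against. I expect no genuine difficulty, but I would present the argument by drawing one enlarged pasting diagram for each of \eqref{eq:alphaR-L'olalpha} and \eqref{eq:unit-compatible-precursor}, subdividing each into the two or three cells described above, labelling each cell with the reason it commutes (definition of mate, naturality of a unit/counit, or a triangle identity), and leaving the remaining routine verification to the reader.
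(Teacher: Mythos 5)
Your proof is correct and follows essentially the same route as the paper's: each diagram is cut into cells that commute either by naturality/interchange of a (co)unit, by a triangle identity, or by the defining formulas \eqref{eq:mate} and \eqref{eq:inverse-mate} for the mate bijection. The only blemishes are cosmetic --- the composite $(R'\alpha)\circ(R'L'G\eta)\circ(\eta'G)$ as written does not typecheck (you mean the comparison of $(R'\alpha)\circ(\eta'G)$ with $(\ol{\alpha}L)\circ(G\eta)$), the left rhombus of \eqref{eq:alphaR-L'olalpha} (naturality of $\epsilon'$ against $F\epsilon$) should be named alongside the one for $\alpha R$, and the step $(\alpha RL)\circ(L'G\eta)=(FL\eta)\circ\alpha$ needed to collapse the zig-zag is an instance of interchange worth stating explicitly --- but the argument is sound.
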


\begin{proof}
  The upper trapezoids and the left rhombi are obviously
  commutative. The triangles on the right are commutative by the
  triangle identities of the adjunctions.
  The lower trapezoids are commutative by the definition of
  the bijection in Lemma~\ref{l:mates}.
\end{proof}

The following result explains how taking mates is compatible with
compositions.

\begin{lemma}
  [{\cite[Proposition~2.2]{kelly-street-review-2-categories}}]
  \label{l:stacking-squares}
  Let
  \begin{equation}
    \label{eq:two-adjunctions-stacked}
    \begin{tikzpicture}[baseline=(current bounding box.center),
      description/.style={fill=white,inner sep=2pt},
      natural/.style={double,double equal sign distance,-implies}]
 
      \matrix (m) [matrix of math nodes, row sep=4em,
      column sep=4em, text height=1.5ex, text depth=0.25ex]
      { \mathcal{C}' & \mathcal{C}\\
        \mathcal{D}' & \mathcal{D}\\
        \mathcal{E}' & \mathcal{E}\\};
      \path[->,font=\scriptsize]
      (m-1-2) edge node[above] {$F$} (m-1-1)
      (m-2-2) edge node[above] {$G$} (m-2-1)
      (m-3-2) edge node[above] {$H$} (m-3-1)

      (m-2-1.120) edge node[left] {$L'$} (m-1-1.-120)
      (m-1-1.-80) edge node[right] {$R'$} (m-2-1.80)
      (m-2-2.120) edge node[left] {$L$} (m-1-2.-120)
      (m-1-2.-80) edge node[right] {$R$} (m-2-2.80)

      (m-3-1.120) edge node[left] {$M'$} (m-2-1.-120)
      (m-2-1.-80) edge node[right] {$P'$} (m-3-1.80)
      (m-3-2.120) edge node[left] {$M$} (m-2-2.-120)
      (m-2-2.-80) edge node[right] {$P$} (m-3-2.80);
    \end{tikzpicture}
    \hspace{2em}
    \begin{tikzpicture}[baseline=(current bounding box.center),
      description/.style={fill=white,inner sep=2pt},
      natural/.style={double,double equal sign distance,-implies} ]
      
      \matrix (m) [matrix of math nodes, row sep=4em,
      column sep=4em, text height=1.5ex, text depth=0.25ex]
      { \mathcal{C}' & \mathcal{C}\\
        \mathcal{D}' & \mathcal{D}\\
        \mathcal{E}' & \mathcal{E}\\};
      \path[->,font=\scriptsize]
      (m-1-2) edge node[above] {$F$} (m-1-1)
      (m-2-2) edge node[above] {$G$} (m-2-1)
      (m-3-2) edge node[above] {$H$} (m-3-1)

      (m-2-1) edge node[left] {$L'$} (m-1-1)
      (m-2-2) edge node[right] {$L$} (m-1-2)

      (m-3-1) edge node[left] {$M'$} (m-2-1)
      (m-3-2) edge node[right] {$M$} (m-2-2)

      (m-2-1) edge[natural,shorten >=0.5em,shorten <=0.5em]
      node[pos=0.5, above left]{$\alpha$} (m-1-2)
      (m-3-1) edge[natural,shorten >=0.5em,shorten <=0.5em]
      node[pos=0.5, above left]{$\beta$} (m-2-2); 
    \end{tikzpicture}
  \end{equation}
  be diagrams in a 2-category and let
  $(L',R')$, $(L,R)$, $(M',P')$, $(M,P)$ 
  be adjunctions.
  Then the mate of $(\alpha M) \circ (L'\beta)$ is
  $(P'\ol{\alpha}) \circ (\ol{\beta}R)$.

  In particular, if $\alpha$, $\ol{\alpha}$, $\beta$,
  $\ol{\beta}$ are 2-isomorphisms, then
  $(\alpha M) \circ (L'\beta)$ and its mate are 2-isomorphisms.
\end{lemma}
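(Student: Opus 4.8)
The plan is to prove the identity by unwinding the definition of mate, \eqref{eq:mate}, for the two \emph{composite} adjunctions and then rewriting the result with the interchange law. Keep the notation $(L',R',\eta',\epsilon')$, $(L,R,\eta,\epsilon)$ from Setting~\ref{set:two-adjunctions}, and write $(M',P',\theta',\delta')$, $(M,P,\theta,\delta)$ for the two lower adjunctions. First I would record that $(L'M',P'R')$ and $(LM,PR)$ are again adjunctions, with units $(P'\eta'M')\circ\theta'$ and $(P\eta M)\circ\theta$ and counits $\epsilon'\circ(L'\delta'R')$ and $\epsilon\circ(L\delta R)$, and likewise spell out the mates $\ol{\alpha}=(R'F\epsilon)\circ(R'\alpha R)\circ(\eta'GR)$ and $\ol{\beta}=(P'G\delta)\circ(P'\beta P)\circ(\theta'HP)$ via \eqref{eq:mate}.

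Now set $\gamma:=(\alpha M)\circ(L'\beta)\colon L'M'H\Ra FLM$ and apply \eqref{eq:mate} with respect to the composite adjunctions $(L'M',P'R')$ and $(LM,PR)$. Substituting the composite unit and counit above and the definition of $\gamma$, the mate $\ol{\gamma}\colon HPR\Ra P'R'F$ becomes a six-fold vertical composite
\begin{multline*}
  \ol{\gamma}=(P'R'F\epsilon)\circ(P'R'FL\delta R)\circ(P'R'\alpha MPR)\\
  {}\circ(P'R'L'\beta PR)\circ(P'\eta'M'HPR)\circ(\theta'HPR),
\end{multline*}
whereas the target $(P'\ol{\alpha})\circ(\ol{\beta}R)$ expands, using the two displayed mates, to
\begin{multline*}
  (P'\ol{\alpha})\circ(\ol{\beta}R)=(P'R'F\epsilon)\circ(P'R'\alpha R)\circ(P'\eta'GR)\\
  {}\circ(P'G\delta R)\circ(P'\beta PR)\circ(\theta'HPR).
\end{multline*}
The outer factors $(P'R'F\epsilon)$ and $(\theta'HPR)$ already agree, so it remains to identify the four inner factors. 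This I would do with three applications of the interchange law: sliding $\eta'$ past $\beta$ rewrites $(P'R'L'\beta PR)\circ(P'\eta'M'HPR)$ as $(P'\eta'GMPR)\circ(P'\beta PR)$; sliding $\delta$ past $\alpha$ rewrites $(P'R'FL\delta R)\circ(P'R'\alpha MPR)$ as $(P'R'\alpha R)\circ(P'R'L'G\delta R)$; and sliding $\eta'$ past $\delta$ rewrites $(P'R'L'G\delta R)\circ(P'\eta'GMPR)$ as $(P'\eta'GR)\circ(P'G\delta R)$. After these three moves the expression for $\ol{\gamma}$ is literally the one for $(P'\ol{\alpha})\circ(\ol{\beta}R)$, since $(P'R'F\epsilon)\circ(P'R'\alpha R)\circ(P'\eta'GR)=P'\ol{\alpha}$ and $(P'G\delta R)\circ(P'\beta PR)\circ(\theta'HPR)=\ol{\beta}R$.

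The ``in particular'' clause is then immediate: $(\alpha M)\circ(L'\beta)$ is a vertical composite of whiskerings of $\alpha$ and $\beta$, and its mate $(P'\ol{\alpha})\circ(\ol{\beta}R)$ is a vertical composite of whiskerings of $\ol{\alpha}$ and $\ol{\beta}$, so both are 2-isomorphisms as soon as the four 2-morphisms are. I do not expect any genuine obstacle here; the only delicate point is the bookkeeping — expanding the two composites with all whiskerings correct and matching each interchange move to the right adjacent pair. Since the statement is quoted verbatim from \cite[Proposition~2.2]{kelly-street-review-2-categories}, one could alternatively just cite it, but the chase above is short enough to include.
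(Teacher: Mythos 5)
Your proof is correct: the expansion of the mate of $(\alpha M)\circ(L'\beta)$ with respect to the composite adjunctions $(L'M',P'R')$ and $(LM,PR)$, followed by the three interchange moves, does reduce it exactly to $(P'\ol{\alpha})\circ(\ol{\beta}R)$, and the whiskerings are all in the right places. The paper itself offers no argument here — its proof is literally the citation to Kelly--Street, Proposition~2.2 — so your computation simply fills in the standard verification that the reference contains; there is nothing to add or correct.
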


\begin{proof}
  See \cite[Proposition~2.2]{kelly-street-review-2-categories}.
\end{proof}

\subsection{Fourier--Mukai transforms and flat base change}
\label{sec:four-mukai-transf}

\begin{setting}
  \label{set:FM-flat-BC}
  Let $X$ and $Y$ be algebraic stacks over a base algebraic stack $S$.
  Let $\Phi = (K, p, q, \mathcal{K}) \colon X \ra Y$ be a
  Fourier--Mukai transform over $S$.
  Let $u \colon S' \ra S$ be a flat morphism of algebraic stacks.
  The following diagram with cartesian squares is obtained from 
  $X \xla{p} K \xra{q} Y$ by base change along $u$.
  \begin{equation}
    \xymatrix{
      {X'} \ar[d]^-{f} 
      & {K'} \ar[d]^-{h} \ar[l]_-{p'} \ar[r]^-{q'}
      & {Y'} \ar[d]^-{g}
      \\
      {X}
      & {K} \ar[l]_-{p} \ar[r]^-{q}
      & {Y} 
    }
  \end{equation}
  Then $\Phi' = (K', p', q', \mathcal{K}':=h^*\mathcal{K})
  \colon X' \ra Y'$ is the base change of $\Phi$ along $u$.
  We denote the right adjoint functors of the Fourier--Mukai
  transforms $\Phi \colon \D_\qc(X) \ra \D_\qc(Y)$ and $\Phi'
  \colon \D_\qc(X') \ra \D_\qc(Y')$ by $\Psi$ and $\Psi'$,
  respectively (cf.~\eqref{eq-fm-functor-right}).
\end{setting}
  
In Setting~\ref{set:FM-flat-BC} we obtain the following diagram 
where the 2-isomorphisms $\alpha$ and $\lambda$ are the obvious
ones, and the 2-isomorphism $\beta$ is the inverse of the base change
transformation for $q_*$ along the flat morphism $g$.
Note that this base change transformation is indeed a 2-isomorphism by
Proposition~\ref{prop-concentrated-base-change} since $q$ is concentrated.
\begin{equation}
  \label{eq:stacked-2-isos}
  \begin{tikzpicture}[baseline=(current bounding box.center),
    description/.style={fill=white,inner sep=2pt},
    natural/.style={double,double equal sign distance,-implies} ]
    
    \matrix (m) [matrix of math nodes, row sep=4em,
    column sep=4em, text height=1.5ex, text depth=0.25ex]
    { \D_\qc(Y') & \D_\qc(Y)\\
      \D_\qc(K') & \D_\qc(K)\\
      \D_\qc(K') & \D_\qc(K)\\
      \D_\qc(X') & \D_\qc(X)\\};
    \path[->,font=\scriptsize]
    (m-1-2) edge node[above] {$g^*$} (m-1-1)
    (m-2-2) edge node[above] {$h^*$} (m-2-1)
    (m-3-2) edge node[above] {$h^*$} (m-3-1)
    (m-4-2) edge node[above] {$f^*$} (m-4-1)
    (m-2-1) edge node[left] {$q'_*$} (m-1-1)
    (m-2-2) edge node[right] {$q_*$} (m-1-2)
    (m-3-1) edge node[left] {$\mathcal{K}' \otimes -$} (m-2-1)
    (m-3-2) edge node[right] {$\mathcal{K} \otimes -$} (m-2-2)
    (m-4-1) edge node[left] {$p'^*$} (m-3-1)
    (m-4-2) edge node[right] {$p^*$} (m-3-2)
    
    (m-2-1) edge[natural,shorten >=0.5em,shorten <=0.5em]
    node[above left]{$\beta$} node[below right]{$\sim$}(m-1-2) 
    (m-3-1) edge[natural,shorten >=0.5em,shorten <=0.5em]
    node[above left]{$\lambda$} node[below right]{$\sim$}(m-2-2)
    (m-4-1) edge[natural,shorten >=0.5em,shorten <=0.5em]
    node[above left]{$\alpha$} node[below right]{$\sim$}(m-3-2); 
  \end{tikzpicture}
\end{equation}
The vertical compositions of the two columns in this diagram are
the Fourier--Mukai transforms $\Phi$ and $\Phi'$.

\begin{proposition}[Flat base change for Fourier--Mukai transforms]
  \label{p:flat-BC-FM}
  In Setting~\ref{set:FM-flat-BC} consider the 2-isomorphism
  \begin{equation}
    \label{eq:6}
    \Phi' f^* \siRa g^* \Phi.
  \end{equation}
  obtained from the three 2-isomorphisms in diagram
  \eqref{eq:stacked-2-isos}. Then its mate 
  \begin{equation}
    \label{eq:7}
    f^* \Psi \siRa \Psi' g^*
  \end{equation}
  is a 2-isomorphism as well.
\end{proposition}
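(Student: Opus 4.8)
The plan is to recognize the 2-isomorphism \eqref{eq:6} as a vertical paste of the three 2-isomorphisms in \eqref{eq:stacked-2-isos} and to transport this decomposition to the mate via Lemma~\ref{l:stacking-squares}. Recall that $\Phi$ is the composite $q_* \circ (\mathcal{K} \otimes -) \circ p^*$ of the three functors appearing (from bottom to top) in \eqref{eq:stacked-2-isos}, and that its right adjoint $\Psi$ is accordingly the composite $p_* \circ (\mathcal{K}^\vee \otimes -) \circ q^\times$ of the three respective right adjoints (cf.\ \eqref{eq-fm-functor-right}); here one uses that $q^\times$ is right adjoint to $q_*$ since $q$ is concentrated, and that $\mathcal{K}^\vee \otimes -$ is right adjoint to $\mathcal{K} \otimes -$ since $\mathcal{K}$ is perfect. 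Thus the adjunction $(\Phi, \Psi)$ is the composite of the three adjunctions $(p^*, p_*)$, $(\mathcal{K} \otimes -, \mathcal{K}^\vee \otimes -)$ and $(q_*, q^\times)$, and likewise $(\Phi', \Psi')$ is the composite of the primed analogues. Since \eqref{eq:6} is, by construction, the vertical paste of $\alpha$, $\lambda$ and $\beta$, two applications of Lemma~\ref{l:stacking-squares} (first combining the $p^*$- and $(\mathcal{K}\otimes-)$-squares, then combining the result with the $q_*$-square) identify its mate \eqref{eq:7} with the corresponding vertical paste of the mates $\ol{\alpha}$, $\ol{\lambda}$ and $\ol{\beta}$, and show that \eqref{eq:7} is a 2-isomorphism provided each of $\ol{\alpha}$, $\ol{\lambda}$, $\ol{\beta}$ is one (the $\alpha$-side 2-morphisms $\alpha$, $\lambda$, $\beta$ being 2-isomorphisms already, as recorded before the proposition).

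It would then remain to check that $\ol{\alpha}$, $\ol{\lambda}$ and $\ol{\beta}$ are 2-isomorphisms, and here I would argue separately for each. For $\alpha$: it is the canonical base change 2-isomorphism $p'^* f^* \siRa h^* p^*$ attached to the left-hand cartesian square of Setting~\ref{set:FM-flat-BC}, so by Example~\ref{eg:base-changes-*-times} its mate $\ol{\alpha}$ is the base change transformation \eqref{eq-base-change-transformation} for $p_*$ along $f$; since $p$ is concentrated by Definition~\ref{def-fourier-mukai}\ref{enum:ppc} and $f$ is flat, being a base change of the flat morphism $u$, this is a 2-isomorphism by Proposition~\ref{prop-concentrated-base-change}. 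For $\lambda$: it is the 2-isomorphism \eqref{eq:lambda} for the morphism $h$, and since $\mathcal{K}$ is perfect, Example~\ref{eg:tensor-perfect-cx} shows that its mate $\ol{\lambda}$, formed with respect to the adjunctions given by tensoring with $\mathcal{K}$ and with $\mathcal{K}' = h^*\mathcal{K}$ and their duals, is a 2-isomorphism. For $\beta$: it is the base change transformation for $q_*$ along the flat morphism $g$, so by Example~\ref{eg:base-changes-*-times} its mate $\ol{\beta}$, with respect to the adjunctions $(q_*, q^\times)$ and $(q'_*, q'^\times)$, is the base change transformation \eqref{eq-duality-base-change} for $q^\times$ along $g$; this is a 2-isomorphism because $q$ has property \ref{enum:pty-base-change} uniformly by Definition~\ref{def-fourier-mukai}\ref{enum:q-pres-perf-pcoh-fbcx}.

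Granting these three facts, the iterated application of Lemma~\ref{l:stacking-squares} yields that \eqref{eq:7} is a 2-isomorphism, completing the proof. The argument is essentially pure mate-calculus, and I expect the one step that requires genuine care to be the identification of the mate of $\beta$ with the base change transformation for $q^\times$: this is precisely the input that forces the hypothesis \ref{enum:pty-base-change} into the definition of a Fourier--Mukai transform, and it is the only place where that condition is used. Everything else is bookkeeping of composites of adjunctions and of the associated whiskerings, which Lemma~\ref{l:stacking-squares} is designed to handle.
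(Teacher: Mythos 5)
Your proof is correct and follows essentially the same route as the paper: identify the mate of the pasted 2-isomorphism with the paste of the mates $\ol{\alpha}$, $\ol{\lambda}$, $\ol{\beta}$ via Lemma~\ref{l:stacking-squares}, and then check each of these is invertible using Proposition~\ref{prop-concentrated-base-change} together with Example~\ref{eg:base-changes-*-times}, Example~\ref{eg:tensor-perfect-cx}, and property \ref{enum:pty-base-change} for $q$, respectively. The paper's proof is just a terser version of exactly this argument.
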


\begin{proof}
  The mate of $\alpha$ is a 2-isomorphism since $f$ is flat and
  $p$ is concentrated
  (Proposition~\ref{prop-concentrated-base-change} and
  Example~\ref{eg:base-changes-*-times}). The mate of $\lambda$
  is a 2-isomorphism by Example~\ref{eg:tensor-perfect-cx} since
  $\mathcal{K}$ is perfect. The
  mate of $\beta$ is a 2-isomorphism since it is the base change
  transformation for $q^\times$ 
  (see Example~\ref{eg:base-changes-*-times}) which is a 2-isomorphism
  since $q$ satisfies property~\ref{enum:pty-base-change} from Definition~\ref{def-properties}.
  Hence Lemma~\ref{l:stacking-squares} yields the 
  statement.
\end{proof}

\subsection{Conservative descent for fully faithfulness}
\label{sec:cons-desc-fully}

\begin{definition}
  \label{d:conservative}
  A functor $G \colon \mathcal{D} \ra \mathcal{D}'$ between
  categories is called \define{conservative} if it reflects
  isomorphisms: if $d \colon D \ra D'$ is any morphism in
  $\mathcal{D}$ such that $G(d)$ is an isomorphism, then $d$ is
  an isomorphism.
\end{definition}

\begin{example}
  \label{eg:conservative}
  A triangulated functor is conservative if and only if it
  reflects zero objects.
  For example, if $g \colon Y' \ra Y$ is a faithfully flat
  morphism of algebraic stacks, then $g^* \colon \D_\qc(Y) \ra
  \D_\qc(Y')$ is conservative.
\end{example}

\begin{proposition}
  [Conservative descent for fully faithfulness]
  \label{p:conservative-descent-ff}
  Assume that we are in Setting~\ref{set:two-adjunctions} in the
  2-category of 
  categories (resp.\ triangulated categories). 
  Let $\alpha \colon L'G \xRa{\sim} FL$ be a
  2-isomorphism and assume that its mate 
  $\ol{\alpha} \colon GR \xRa{\sim} R'F$ is a 2-isomorphism as well.
  \begin{enumerate}
  \item
    \label{enum:L-ff}
    If the functor $G$ is
    conservative and the functor $L'$ is fully faithful, then $L$
    is fully faithful. 
  \item 
    \label{enum:R-ff}
    If $F$ is conservative and $R'$ is fully faithful, then
    $R$ is fully faithful.
  \end{enumerate}
  In particular, we get \emph{conservative descent for
    equivalences}: If both $F$ and $G$ are conservative and $(L',R')$
  is 
  an equivalence, then so is $(L,R)$.
\end{proposition}

\begin{proof}
  \ref{enum:L-ff}
  Observe that the commutative diagram
  \eqref{eq:unit-compatible-precursor}
  in Lemma~\ref{l:counits-compatible-precursor}
  yields
  the equality $(\ol{\alpha}L)
  \circ (G\eta)=(R'\alpha) \circ (\eta'G)$ of 2-morphisms, so
  that 
  $G\eta$ is a 2-isomorphism if and only if $\eta'G$ is a
  2-isomorphism.
  Now assume that $L'$ is full and faithful. This is equivalent to
  the condition that 
  $\eta'$ is a 2-isomorphism. Hence $\eta'G$ and $G\eta$ are
  2-isomorphisms. Since $G$ is conservative,
  this means that $\eta$ is a 2-isomorphism, i.e., $L$ is fully
  faithful.

  \ref{enum:R-ff}
  This follows similarly from the equality
  $(F\epsilon) \circ (\alpha R)= (\epsilon'F) \circ (L'\ol{\alpha})$
  obtained from the commutative diagram
  \eqref{eq:alphaR-L'olalpha}
  in Lemma~\ref{l:counits-compatible-precursor}.
\end{proof}

\begin{proposition}[Faithfully flat descent for fully
  faithfulness of Fourier--Mukai transforms]
  \label{p:FM-functor-ff-local}
  In Setting~\ref{set:FM-flat-BC}
  assume 
  that the flat base change morphism $u \colon S' \ra S$ is
  surjective, i.e.,
  faithfully flat. 
  Then the Fourier--Mukai transform
  $\Phi$ is
  fully faithful if its base change
  $\Phi'$ is fully faithful.
\end{proposition}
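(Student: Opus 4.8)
The plan is to deduce this from the abstract \emph{conservative descent for fully faithfulness}, namely Proposition~\ref{p:conservative-descent-ff}, applied to the flat base change square for Fourier--Mukai transforms that was already established in Proposition~\ref{p:flat-BC-FM}. All the substantive work is contained in that earlier proposition; what remains is bookkeeping and one conservativity check.

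First I would set up the identifications that put us into Setting~\ref{set:two-adjunctions} in the 2-category of triangulated categories. Take $\mathcal{C} = \D_\qc(Y)$, $\mathcal{C}' = \D_\qc(Y')$, $\mathcal{D} = \D_\qc(X)$, $\mathcal{D}' = \D_\qc(X')$, with $F := g^*$, $G := f^*$, the adjunction $(L,R) := (\Phi, \Psi)$ and the adjunction $(L', R') := (\Phi', \Psi')$. With these identifications the 2-isomorphism $\Phi' f^* \siRa g^* \Phi$ from Proposition~\ref{p:flat-BC-FM} plays the role of $\alpha \colon L'G \siRa FL$, and that same proposition tells us exactly that its mate, which is the 2-morphism $f^* \Psi \siRa \Psi' g^*$ playing the role of $\ol{\alpha} \colon GR \siRa R'F$, is again a 2-isomorphism. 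Thus the two hypotheses of Proposition~\ref{p:conservative-descent-ff} concerning $\alpha$ and $\ol{\alpha}$ are met.

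Next I would verify the remaining hypotheses of Proposition~\ref{p:conservative-descent-ff}. The functor $G = f^*$ is conservative: by Setting~\ref{set:FM-flat-BC} the morphism $f \colon X' \ra X$ is the base change of $u \colon S' \ra S$ along $X \ra S$, and since $u$ is faithfully flat so is $f$; hence $f^*$ is conservative by Example~\ref{eg:conservative}. The assumption that the base change $\Phi'$ is fully faithful says precisely that $L'$ is fully faithful. Proposition~\ref{p:conservative-descent-ff} then yields that $L = \Phi$ is fully faithful, which is the assertion.

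I do not expect any real obstacle: the only point requiring care is to match Setting~\ref{set:FM-flat-BC} with Setting~\ref{set:two-adjunctions} so that $\alpha$ and $\ol{\alpha}$ occupy the correct slots, and the one genuinely substantive input---that the mate $\ol{\alpha}$ is an isomorphism, which in turn rests on property~\ref{enum:pty-base-change} of $q$ and perfectness of $\mathcal{K}$---has already been carried out in Proposition~\ref{p:flat-BC-FM}.
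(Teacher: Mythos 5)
Your proposal is correct and is precisely the paper's argument: the printed proof is the one-line citation of Proposition~\ref{p:flat-BC-FM}, Example~\ref{eg:conservative} and Proposition~\ref{p:conservative-descent-ff}, whose identifications (with $F=g^*$, $G=f^*$, $(L,R)=(\Phi,\Psi)$, $(L',R')=(\Phi',\Psi')$) you have simply spelled out. The conservativity check for $f^*$ via faithful flatness of the base change of $u$ is exactly the intended content of the reference to Example~\ref{eg:conservative}.
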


\begin{proof}
  Thanks to Proposition~\ref{p:flat-BC-FM}
  and Example~\ref{eg:conservative},
  Proposition~\ref{p:conservative-descent-ff}.\ref{enum:L-ff}
  applies.
\end{proof}
  
\subsection{Comonads and idempotent comonads}
\label{sec:comon-idemp-comon}

We continue to work in our fixed 2-category.
Standard references for monads and comonads are \cite[Section
VI]{maclane-working-mathematician}, \cite[Section
4]{borceux-cat-2} and~\cite{street-monads}.

\begin{definition}
  \label{d:comonad}
  A \define{comonad} is a quadruple $(\mathcal{C}, S, \epsilon,
  \delta)$ where $\mathcal{C}$ is an object, 
  $S \colon \mathcal{C} \ra \mathcal{C}$ is a 1-morphism, and
  \define{counit}
  $\epsilon \colon S \Ra \id_\mathcal{C}$ and \define{comultiplication} $\delta
  \colon S \Ra S^2$ are 
  2-morphisms such that the two diagrams
  \begin{equation}
    \label{eq:diagrams-comonad}
    \xymatrix{
      & {S} \ar@{=>}[ld]_-{\id} \ar@{=>}[d]^-{\delta}
      \ar@{=>}[rd]^-{\id}\\ 
      {S} & {S^2} \ar@{=>}[l]_-{\epsilon S}
      \ar@{=>}[r]^-{S\epsilon} & {S,} 
    } 
    \hspace{2em}
    \xymatrix{
      {S} \ar@{=>}[r]^-{\delta} \ar@{=>}[d]_-{\delta} 
      & {S^2} \ar@{=>}[d]^-{S\delta}\\ 
      {S^2} \ar@{=>}[r]^-{\delta S} 
      & {S^3} 
    }
  \end{equation}
  are commutative.

  A comonad
  $(\mathcal{C}, S, \epsilon, \delta)$ is
  \define{idempotent} if its comultiplication $\delta$ is a
  2-isomorphism 
  (see Remark~\ref{rem:define-idempotent-comonad} below for a shorter
  equivalent definition).

  By abuse of notation we often say that
  $S$ is a comonad on $\mathcal{C}$ or just that $S$ or
  $(S, \epsilon, \delta)$
  is a comonad.
\end{definition}

\begin{example}
  \label{eg:comonad-from-adjunction}
  Let
  $(L \colon
  \mathcal{D} \rla 
  \mathcal{C} \colon R, \eta, \epsilon)$ be an adjunction where $\eta
  \colon \id_\mathcal{D} 
  \Ra RL$ is the unit and $\epsilon \colon LR \Ra
  \id_\mathcal{C}$ is the counit. Then we obtain an associated comonad
  \begin{equation}
    \label{eq:assoc-comonad}
    (S:=LR, \epsilon, \delta:=L\eta R)
  \end{equation}
  on $\mathcal{C}$.
  It is idempotent if the unit $\eta$ is a 2-isomorphism.

  If we work in the 2-category of (triangulated) categories, then
  the functor  
  $L$ is fully faithful if and only if $\eta$ is a
  2-isomorphism. 
  In this case, the essential images of $L$ and $S=LR$ coincide,
  i.e., $\im L=\im S$,
  because $L\eta \colon L \Ra LRL=SL$ is a 2-isomorphism.
  In particular, 
  any fully faithful left adjoint functor $L$ (which is part of an
  adjunction 
  as above) 
  gives rise to an idempotent comonad with the same essential image.
\end{example}

\begin{example}
  \label{eg:FM-comonad}
  Let $\Phi = (K, p, q, \mathcal{K}) \colon X \ra Y$ be a relative
  Fourier--Mukai 
  transform. Assume that the induced functor $\Phi \colon \D_\qc(X)
  \ra \D_\qc(Y)$ is fully faithful, and denote its right adjoint
  by $\Psi$. Then 
  \begin{equation}
    \label{eq:FM-comonad}
    S_\Phi:=\Phi \Psi
  \end{equation}
  is an idempotent comonad on $\D_\qc(Y)$ whose essential
  image is the essential image of $\Phi$.
  This is a special case of Example~\ref{eg:comonad-from-adjunction}.
\end{example}

\begin{lemma}
  \label{l:idempotent-comonad}
  A comonad $(S, \epsilon, \delta)$ is idempotent if and only if
  $\epsilon S$ (resp.\ $S\epsilon$) is a 2-isomorphism.  
  If these conditions are satisfied then
  $\epsilon S = S \epsilon = \delta^{-1}$ and
  $\delta S= S\delta$.
\end{lemma}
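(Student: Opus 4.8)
The plan is to exploit the two triangle identities (or rather, co-triangle identities) displayed in the left-hand diagram of \eqref{eq:diagrams-comonad}, namely $(\epsilon S) \circ \delta = \id_S = (S\epsilon) \circ \delta$, together with the coassociativity from the right-hand diagram. First I would prove the forward implication, which is immediate: if $\delta$ is a 2-isomorphism, then from $(\epsilon S)\circ\delta = \id_S$ we get $\epsilon S = \delta^{-1}$, so $\epsilon S$ is a 2-isomorphism; symmetrically $S\epsilon = \delta^{-1}$, and in particular $\epsilon S = S\epsilon$. This already yields all the stated formulas under the hypothesis that $\delta$ is invertible.

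For the converse, assume $\epsilon S$ is a 2-isomorphism (the case of $S\epsilon$ being entirely symmetric, or deducible by applying the result to the "opposite" comonad). The key step is to show $\delta$ is invertible, and the natural candidate for its inverse is $\epsilon S$. One direction, $(\epsilon S)\circ \delta = \id_S$, is just the counit axiom. For the other direction I need $\delta \circ (\epsilon S) = \id_{S^2}$. The trick is to use coassociativity $(S\delta)\circ \delta = (\delta S)\circ \delta$ together with naturality (interchange law) of the 2-morphism $\epsilon$: precomposing and postcomposing appropriately, one computes
\begin{equation*}
\delta \circ (\epsilon S) = (\epsilon S S)\circ (\delta S) = (\epsilon S S)\circ(S\delta)\circ\delta\circ(\text{something})
\end{equation*}
— more carefully, I would write $\delta\circ(\epsilon S) = (\epsilon SS)\circ (\delta S)$ by naturality of $\epsilon$ applied to the 1-morphism $\delta\colon S\Ra S^2$ (reading $\delta$ as a morphism and whiskering), then use $(\delta S)\circ(\epsilon S) = (\epsilon SS)\circ(\delta SS)\circ\dots$. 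The clean way: since $\epsilon S$ is invertible, it suffices to check $\delta\circ(\epsilon S)\circ(\epsilon S)^{-1} $ behaves correctly, or equivalently to show $(\epsilon S)\circ\delta\circ(\epsilon S) = \epsilon S$, which follows from the counit axiom $(\epsilon S)\circ \delta = \id_S$ applied on the left. Then right-cancel the invertible 2-morphism $\epsilon S$ to conclude $\delta\circ(\epsilon S) = \id_{S^2}$. Hence $\delta$ is invertible with inverse $\epsilon S$, so the comonad is idempotent, and the first part of the lemma then supplies $\epsilon S = S\epsilon = \delta^{-1}$ and $\delta S = S\delta$.

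The main obstacle is purely bookkeeping: getting the whiskering and the interchange law right so that the cancellation of $\epsilon S$ is legitimate — in particular making sure one cancels $\epsilon S$ (whiskered on the correct side) rather than $S\epsilon$, and tracking which copy of $S$ in $S^2$ or $S^3$ each 2-morphism acts on. I would handle this by drawing the relevant pasting diagram with $S$, $S^2$, $S^3$ as nodes and $\delta$, $\epsilon S$, $S\epsilon$, $\delta S$, $S\delta$ as edges, and reading off commutativity from \eqref{eq:diagrams-comonad} plus the interchange law; the algebra is short once the diagram is in place. Finally, the symmetry between the $\epsilon S$ and $S\epsilon$ hypotheses is best dispatched in one sentence by noting that $(S,\epsilon,\delta)$ is a comonad in the 2-category $\mathcal{C}$ if and only if it is one in $\mathcal{C}^{\mathrm{co}}$ (reversing 2-morphisms turns a comonad into a monad, and the roles of $\epsilon S$ and $S\epsilon$ swap), so only one case needs a genuine argument.
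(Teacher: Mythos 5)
Your proof is correct and takes the same route as the paper, which simply asserts that the lemma follows from the two defining diagrams in \eqref{eq:diagrams-comonad} — exactly what you unpack. Two small remarks: the converse is even shorter than your "clean way," since $(\epsilon S)\circ\delta=\id_S$ together with invertibility of $\epsilon S$ gives $\delta=(\epsilon S)^{-1}$ outright (no two-sided-inverse check, and your "right-cancel" should in any case be a cancellation of the outer, post-composed $\epsilon S$); and the duality exchanging the $\epsilon S$ and $S\epsilon$ hypotheses is reversal of 1-cells ($\mathcal{C}^{\mathrm{op}}$, which flips the order of whiskering), not reversal of 2-cells ($\mathcal{C}^{\mathrm{co}}$) — though the direct argument is symmetric anyway, using the other counit identity $(S\epsilon)\circ\delta=\id_S$.
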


\begin{proof}
  This follows from the defining commutative diagrams in
  \eqref{eq:diagrams-comonad}.
\end{proof}

\begin{lemma}
  \label{l:define-idempotent-comonad}
  Let $S \colon \mathcal{C} \ra \mathcal{C}$ be a 1-morphism and
  $\epsilon \colon S \Ra \id_\mathcal{C}$ a 2-morphism. If $S\epsilon$
  and $\epsilon S$ are equal  
  2-isomorphisms, then there is a unique $\delta \colon S \Ra
  S^2$, namely $\delta=(S\epsilon)^{-1}=(\epsilon S)^{-1}$, such
  that $(\mathcal{C}, S, \epsilon, \delta)$ is a comonad. 
  This comonad is idempotent. 
\end{lemma}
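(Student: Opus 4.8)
The plan is to let $\delta$ be exactly the 2-morphism named in the statement, $\delta := (S\epsilon)^{-1} = (\epsilon S)^{-1} \colon S \Ra S^2$, and then to verify in turn: (a) that $(\mathcal{C}, S, \epsilon, \delta)$ satisfies the comonad axioms of Definition~\ref{d:comonad}; (b) that no other $\delta$ works; (c) that the resulting comonad is idempotent. Note first that $\delta$ is well defined precisely because the hypothesis says that $S\epsilon$ and $\epsilon S$ are equal and are 2-isomorphisms, so they have an unambiguous common two-sided inverse.

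For (a), the two counit triangles in \eqref{eq:diagrams-comonad} say $(\epsilon S) \circ \delta = \id_S$ and $(S\epsilon) \circ \delta = \id_S$, and both hold immediately since $\delta$ is, by construction, inverse to the 2-morphism $S\epsilon = \epsilon S$. For the coassociativity square it is enough to prove the stronger identity $S\delta = \delta S$ of 2-morphisms $S^2 \Ra S^3$, as that at once yields $(S\delta) \circ \delta = (\delta S) \circ \delta$. Here I would use that whiskering by a fixed 1-morphism is functorial on hom-categories and hence preserves inverses of 2-isomorphisms, so that $S\delta = \bigl(S(S\epsilon)\bigr)^{-1}$ and $\delta S = \bigl((\epsilon S)S\bigr)^{-1}$; whiskering the given equality $S\epsilon = \epsilon S$ on the left by $S$ gives $S(S\epsilon) = S(\epsilon S)$, whiskering it on the right by $S$ gives $(S\epsilon)S = (\epsilon S)S$, and associativity of horizontal composition gives $S(\epsilon S) = (S\epsilon)S$; chaining these equalities yields $S(S\epsilon) = (\epsilon S)S$, hence $S\delta = \delta S$. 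This whiskering bookkeeping is the only point requiring any care; I do not expect a genuine obstacle in this lemma.

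For (b), if $\delta'$ is any 2-morphism making $(\mathcal{C}, S, \epsilon, \delta')$ a comonad, then its counit axiom forces $(\epsilon S) \circ \delta' = \id_S$, and since $\epsilon S$ is a 2-isomorphism by hypothesis we conclude $\delta' = (\epsilon S)^{-1} = \delta$. Finally (c) is immediate: $\delta$ is a 2-isomorphism by construction, so the comonad is idempotent by definition; alternatively this is exactly the criterion of Lemma~\ref{l:idempotent-comonad}, whose hypothesis that $\epsilon S$ be a 2-isomorphism is satisfied here.
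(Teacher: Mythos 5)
Your proposal is correct and follows essentially the same route as the paper: uniqueness is forced by the counit triangle, and coassociativity is obtained by showing $S\delta=\delta S$ via the fact that whiskering preserves inverses together with the identity $S(S\epsilon)=S(\epsilon S)=(S\epsilon)S=(\epsilon S)S$, which is exactly the computation $(S\delta)^{-1}=S\delta^{-1}=S\epsilon S=\delta^{-1}S=(\delta S)^{-1}$ in the paper. No gaps.
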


\begin{proof}
  The left diagram in \eqref{eq:diagrams-comonad}
  shows that we have to put
  $\delta=(S\epsilon)^{-1}=(\epsilon S)^{-1}$.
  But then $S\delta$ and $\delta S$ are invertible and
  $(S\delta)^{-1}=S\delta^{-1} = S \epsilon S =
    \delta^{-1}S =
    (\delta S)^{-1}$.
  This implies that the right diagram in
  \eqref{eq:diagrams-comonad} is commutative.
\end{proof}

\begin{remark}
  [Alternative definition of an idempotent comonad]
  \label{rem:define-idempotent-comonad}
  Giving an idempotent comonad $(\mathcal{C}, S, \epsilon, \delta)$ 
  is the same thing as giving a triple $(\mathcal{C}, S, \epsilon)$
  where 
  $S \colon \mathcal{C} \ra \mathcal{C}$ is a 1-morphism and 
  $\epsilon \colon S \Ra \id_\mathcal{C}$ 
  is a 2-morphism
  such that
  $S\epsilon=\epsilon S$ is a 2-isomorphism.  This is obvious
  from Lemmas~\ref{l:idempotent-comonad} and
  \ref{l:define-idempotent-comonad}. Hence it suffices to write
  $(\mathcal{C}, S,\epsilon)$ when referring to an idempotent comonad.
  Again, we often just write $(S,
  \epsilon)$ or $S$ for an idempotent comonad.
\end{remark}

\begin{remark}[Idempotent comonads versus colocalizations]
  \label{rem:ip-comonad-colocalization}
  An endofunctor $S \colon \mathcal{C} \ra \mathcal{C}$ of a
  (triangulated) category is a
  \emph{colocalization functor} 
  in the sense of \cite[2.4, 2.8]{krause-localization}
  if there exists a 2-morphism
  $\epsilon \colon S \Ra \id_\mathcal{C}$ such that $(S,
  \epsilon)$ is an idempotent comonad
  (use Remark~\ref{rem:define-idempotent-comonad}); if 
  $\epsilon' \colon S
  \Ra \id_\mathcal{C}$ is another 2-morphism turning $S$ into an
  idempotent comonad, 
  there is a unique 
  2-automorphism $\mu \colon S \xRa{\sim} S$ such that
  $\epsilon=\epsilon' \circ \mu$ (see
  \cite[Remark~2.5.5.(1)]{krause-localization} for the
  corresponding statement for localizations).
  In particular, the difference between the definition of an
  idempotent comonad and a colocalization functor consists in
  making $\epsilon$ part of the datum or not.
\end{remark}

\begin{remark}
  \label{rem:image-ip-comonad-right-admissible}
  If $(S, \epsilon)$ is an idempotent comonad on a (triangulated)
  category $\mathcal{C}$, then the 
  inclusion
  functor $\im S \ra \mathcal{C}$ admits a right adjoint, e.g.,
  the functor $\mathcal{C} \ra \im S$ induced by $S$.
\end{remark}

\subsection{Compatibilities of idempotent comonads}
\label{sec:comp-idemp-comon}

\begin{definition}
  \label{d:compatibility}
  Let $(\mathcal{C}, S, \epsilon)$ and $(\mathcal{C}', S',
  \epsilon')$ be idempotent comonads and
  let $F \colon
  \mathcal{C} \ra \mathcal{C}'$ be a 1-morphism.
  An \define{$F$-compatibility}
  from $S$ to $S'$, written $\sigma \colon S \xRa{F} S'$, is a
  2-isomorphism 
  $\sigma \colon FS \xRa{\sim} S'F$
  such that the diagram 
  \begin{equation}
    \label{eq:diagram-compatibility}
    \xymatrix{
      {FS} 
      \ar@{=>}[d]_-{F\epsilon}
      \ar@{=>}[r]^-{\sigma}_-{\sim} &
      {S'F} 
      \ar@{=>}[d]_-{\epsilon' F} \\
      {F} 
      \ar@{=}[r] &
      {F} 
    }
  \end{equation}
  commutes. If there exists at least one $F$-compatibility
  from $S$ to $S'$, 
  we say that $S$ and $S'$ are \emph{compatible with respect to $F$}.
\end{definition}

\begin{remark}
  Our notion of an $F$-compatibility is closely related to the
  notion of a morphism of comonads as defined in
  \cite{street-monads}):
  a morphism 
  $(F, \sigma) \colon (\mathcal{C}, S, \epsilon, \delta) \ra (\mathcal{C}',
  S', \epsilon', \delta')$ of comonads
  consists of a 1-morphism $F \colon \mathcal{C} \ra \mathcal{C}'$ 
  and a 2-morphism $\sigma \colon FS \Ra S'F$ 
  such that two obvious diagrams are commutative.
  
  If we assume that $S$ and $S'$ are idempotent comonads and 
  fix a 1-morphism $F \colon
  \mathcal{C} \ra \mathcal{C}'$, then the set
  of
  $F$-compatibilities coincides
  precisely with the set of 
  2-iso\-mor\-phisms $\sigma \colon FS \xRa{\sim} S'F$ such that
  $(F,\sigma)$ is a morphism of comonads. The proof of this
  result is not difficult and uses
  Lemma~\ref{l:define-idempotent-comonad}. We omit the details
  since we do not use arbitrary morphisms of comonads in the
  following.
\end{remark}

\subsection{Monads and compatibilities}
\label{sec:monads}

All results we have proven for (idempotent) comonads have analogs
for (idempotent) monads. We quickly review what we need.

A \define{monad} is a quadruple $(\mathcal{D}, T, \eta, \mu)$
where $\mathcal{D}$ is an object, $T \colon \mathcal{D} \ra
\mathcal{D}$ is a 1-morphism, and \define{unit} $\eta \colon
\id_\mathcal{D} \ra T$ and \define{multiplication} $\mu \colon
T^2 \Ra T$ are 2-isomorphisms making the obvious two diagrams
commutative; the shape of these two diagrams is obtained from the
two diagrams in
\eqref{eq:diagrams-comonad} 
by reversing the direction of all 2-morphisms.
Such a monad is \define{idempotent} if its
multiplication $\mu$ is a 2-isomorphism.

An idempotent monad is equivalently given by a triple 
$(\mathcal{D}, T, \eta)$ where $T\eta$ and $\eta T$ are equal and
invertible (by the monadic version of
Remark~\ref{rem:define-idempotent-comonad}). 

\begin{example}
  [cf.\ Example~\ref{eg:comonad-from-adjunction}]
  \label{eg:monad-from-adjunction}
  Let
  $(L \colon
  \mathcal{D} \rla 
  \mathcal{C} \colon R, \eta, \epsilon)$ be an adjunction where $\eta
  \colon \id_\mathcal{D} 
  \Ra RL$ is the unit and $\epsilon \colon LR \Ra
  \id_\mathcal{C}$ is the counit. Then we obtain an associated monad
  \begin{equation}
    \label{eq:assoc-monad}
    (T:=RL, \eta, \mu:=R\epsilon L)
  \end{equation}
  on $\mathcal{C}$.
  It is idempotent if the counit $\epsilon$ is a 2-isomorphism.
\end{example}

Given idempotent monads $(\mathcal{D}, T, \eta)$ and
$(\mathcal{D}', T', \eta')$ and a 1-morphism
$G \colon \mathcal{D} \ra \mathcal{D}'$, a \define{$G$-compatibility}
from $T$ to $T'$, written $\tau \colon T \xRa{G} T'$, is
a 2-isomorphism $\tau \colon GT \xRa{\sim} T'G$ such that the diagram
\begin{equation}
  \xymatrix{
    {GT} 
    \ar@{=>}[r]^-{\tau}_-{\sim} &
    {T'G} \\
    {G} 
    \ar@{=>}[u]_-{G\eta}
    \ar@{=}[r] &
    {G} 
    \ar@{=>}[u]_-{\eta' G}
  }
\end{equation}
is commutative. 
The direction of $\tau$ is chosen with regard to our later
applications. 

\subsection{Compatibilities from mates}
\label{sec:comp-from-mates}

\begin{lemma}
  \label{l:idempotent-comonads-compatible}
  In Setting~\ref{set:two-adjunctions} assume that the units
  $\eta$ and $\eta'$ of
  both adjunctions $(L, R, \eta, \epsilon)$ and
  $(L', R', \eta', \epsilon')$ are 2-isomorphisms.
  Assume that there is a 2-isomorphism $\alpha \colon L'G \Ra FL$
  whose mate $\ol{\alpha}$ is a 2-isomorphism. Then 
  \begin{equation}
    \label{eq:compatibility-comonads}
    (L'R'F\epsilon) \circ (\epsilon' FLR)^{-1}
    \colon 
    (S=LR, \epsilon) \xRa{F} (S'=L'R', \epsilon')
  \end{equation}
  is an $F$-compatibility between the associated 
  idempotent comonads.
\end{lemma}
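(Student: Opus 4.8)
The plan is to exhibit the claimed 2-morphism in \eqref{eq:compatibility-comonads} as a mate and then deduce the required properties from the calculus of mates developed in \ref{sec:mates}. First I would observe that, by Example~\ref{eg:comonad-from-adjunction}, since $\eta$ and $\eta'$ are 2-isomorphisms, the pairs $(S=LR,\epsilon)$ and $(S'=L'R',\epsilon')$ are genuinely idempotent comonads, so the statement makes sense. The 2-morphism $\sigma := (L'R'F\epsilon)\circ(\epsilon'FLR)^{-1} \colon FS=FLR \Ra L'R'F=S'F$ is built from the counits $\epsilon$, $\epsilon'$ and the 2-isomorphism $\alpha$ implicitly (via $\ol{\alpha}$), so the first task is to check that $\sigma$ is a 2-isomorphism. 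This should follow because $\epsilon'FLR$ is a 2-isomorphism by idempotency of $S'$ (Lemma~\ref{l:idempotent-comonad} applied to $(S',\epsilon',\delta')$, noting $\epsilon'L' = \eta'^{-1}$ is invertible since $\eta'$ is), and $L'R'F\epsilon$ will be shown invertible by a similar argument applied to $S$.

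Next I would verify the compatibility square \eqref{eq:diagram-compatibility}, i.e.\ that $(\epsilon'F)\circ\sigma = F\epsilon$ as 2-morphisms $FLR \Ra F$. Unwinding, this amounts to $(\epsilon'F)\circ(L'R'F\epsilon)\circ(\epsilon'FLR)^{-1} = F\epsilon$, equivalently $(\epsilon'F)\circ(L'R'F\epsilon) = (F\epsilon)\circ(\epsilon'FLR)$. This last identity is just the naturality (interchange law) of the 2-morphism $\epsilon' F \colon L'R'F \Ra F$ applied to the 2-morphism $F\epsilon \colon FLR \Ra F$ — both composites equal the whiskering of $\epsilon'$ and $F\epsilon$ in the appropriate order. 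So this step is essentially formal once the expression for $\sigma$ is in hand.

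The substantive point — and where I expect the real content to lie — is connecting $\sigma$ to $\ol{\alpha}$, since the hypothesis that $\ol{\alpha}$ is a 2-isomorphism is what actually forces $\sigma$ to be well-behaved. Here I would invoke Lemma~\ref{l:counits-compatible-precursor}: the commutative diagram \eqref{eq:alphaR-L'olalpha} relates $\alpha R$, $F\epsilon$, $\epsilon'F$ and $L'\ol{\alpha}$, and chasing it should yield that $\sigma$ coincides (up to the canonical identifications coming from $\eta$, $\eta'$) with a 2-morphism built from $\ol{\alpha}$; in fact one expects $\sigma$ to be expressible via $L'\ol{\alpha}R^{-1}$-type data together with $\eta$, $\eta'$. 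Concretely, I would show $\sigma = (L'\ol{\alpha})\circ(L'\eta'^{-1}GR)\circ\cdots$ — matching against the bottom-left path of \eqref{eq:alphaR-L'olalpha} — and since every constituent is a 2-isomorphism under our hypotheses, so is $\sigma$. The main obstacle is bookkeeping: getting the whiskerings and the directions of $\eta^{\pm 1}$, $\eta'^{\pm 1}$ exactly right so that the diagram in Lemma~\ref{l:counits-compatible-precursor} can be applied verbatim. Once that identification is made, both the invertibility of $\sigma$ and the commutativity of \eqref{eq:diagram-compatibility} drop out, the latter being precisely the commutative left portion of \eqref{eq:alphaR-L'olalpha} combined with the triangle identity for $(L,R,\eta,\epsilon)$.
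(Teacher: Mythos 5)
Your overall route---take $\sigma=(L'R'F\epsilon)\circ(\epsilon'FLR)^{-1}$, prove its two constituents are invertible, and obtain the compatibility square from the interchange law---is exactly the paper's, and your second paragraph is correct: the identity $(\epsilon'F)\circ(L'R'F\epsilon)=(F\epsilon)\circ(\epsilon'FLR)$ is the interchange law for $\epsilon'*(F\epsilon)$, i.e.\ the left rhombus of \eqref{eq:alphaR-L'olalpha}. The problem is the justification of invertibility in your first paragraph. Idempotency of $S'$ only gives invertibility of $\epsilon'$ whiskered by $L'$ or by $S'=L'R'$; it says nothing about $\epsilon'FLR$, because $FLR$ does not factor through $L'$ a priori. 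To invert $\epsilon'FLR$ you must use $\alpha$: the 2-isomorphism $\alpha R\colon L'GR\siRa FLR$ transports the invertibility of $\epsilon'L'GR=(L'\eta'GR)^{-1}$ (triangle identity plus $\eta'$ invertible) to $\epsilon'FLR$ via the upper trapezoid of \eqref{eq:alphaR-L'olalpha}. Worse, the claim that $L'R'F\epsilon$ ``will be shown invertible by a similar argument applied to $S$'' does not work: idempotency of $S$ controls $\epsilon L$, $\epsilon LR$ and $LR\epsilon$, but $F\epsilon$ is in general not invertible, and neither is $L'R'F\epsilon$ without further input---if it were, the lemma would hold with no hypothesis on $\ol{\alpha}$ at all. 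The actual reason is the lower trapezoid of \eqref{eq:alphaR-L'olalpha}, i.e.\ the definition of the mate: $L'\ol{\alpha}=(L'R'F\epsilon)\circ(L'R'\alpha R)\circ(L'\eta'GR)$, whence $L'R'F\epsilon=(L'\ol{\alpha})\circ(L'\eta'GR)^{-1}\circ(L'R'\alpha R)^{-1}$ is invertible precisely because $\ol{\alpha}$, $\alpha$ and $\eta'$ are.

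Your third paragraph does identify this as the substantive point and gestures at the right chase through Lemma~\ref{l:counits-compatible-precursor}, so the proof is recoverable; but as written, the first paragraph asserts the key invertibilities on grounds that do not hold, and the ``similar argument applied to $S$'' must be replaced by the lower-trapezoid identity above. Once the two trapezoid identities are written out, your argument coincides with the paper's.
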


\begin{proof}
  Since $\eta$ and $\eta'$ are 2-isomorphisms,
  the associated comonads
  $S$
  and $S'$ are idempotent by
  Example~\ref{eg:comonad-from-adjunction}.
  Consider the commutative diagram
  \eqref{eq:alphaR-L'olalpha}
  of Lemma~\ref{l:counits-compatible-precursor}.
  Invertibility of $\alpha$, $\ol\alpha$ and $\eta'
  G$ shows that
  the two 2-morphisms $\epsilon'FLR$
  and $L'R'F\epsilon$ are invertible.
  Hence $(L'R'F\epsilon) \circ (\epsilon'FLR)^{-1}$ is a
  2-isomorphism whose composition with $\epsilon'F$ is
  $F\epsilon$. This just means that it is an $F$-compatibility
  from
  the idempotent comonad $S$ to the idempotent comonad $S'$.
\end{proof}

\begin{proposition}
  \label{p:FM-comonads-f*-compatible}
  In Setting~\ref{set:FM-flat-BC}
  assume that the Fourier--Mukai transform
  $\Phi$ and its base change $\Phi'$ are fully faithful. Then the
  associated 
  idempotent comonads $S_\Phi$ on $\D_\qc(Y)$ and $S_{\Phi'}$ on $\D_\qc(Y')$ 
  (see Example~\ref{eg:FM-comonad})
  are compatible with respect to $g^* \colon \D_\qc(Y) \ra
  \D_\qc(Y')$.
\end{proposition}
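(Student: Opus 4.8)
The plan is to deduce this from the abstract compatibility result, Lemma~\ref{l:idempotent-comonads-compatible}, applied in the 2-category of triangulated categories. To do so I must exhibit the relevant instance of Setting~\ref{set:two-adjunctions} together with a 2-isomorphism $\alpha$ whose mate is a 2-isomorphism, and then check that the resulting $F$-compatibility is precisely the one we want. The natural choice is $F := g^* \colon \D_\qc(Y) \ra \D_\qc(Y')$, together with $G := f^* \colon \D_\qc(X) \ra \D_\qc(X')$ (playing the role of the lower horizontal morphism in the setting), the adjunction $(L, R) := (\Phi, \Psi)$ on the right side and $(L', R') := (\Phi', \Psi')$ on the left side. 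So the square of Setting~\ref{set:two-adjunctions} reads: top row $g^* \colon \D_\qc(Y) \to \D_\qc(Y')$, bottom row $f^* \colon \D_\qc(X) \to \D_\qc(X')$, right column the adjunction $\Phi \dashv \Psi$ between $\D_\qc(X)$ and $\D_\qc(Y)$, left column the adjunction $\Phi' \dashv \Psi'$ between $\D_\qc(X')$ and $\D_\qc(Y')$.

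With these identifications, the 2-isomorphism $\alpha \colon L'G \Ra FL$ required by Lemma~\ref{l:idempotent-comonads-compatible} is a 2-isomorphism $\Phi' f^* \Ra g^* \Phi$, and this is exactly the flat base change isomorphism \eqref{eq:6} provided by Proposition~\ref{p:flat-BC-FM}. That same proposition tells us its mate \eqref{eq:7}, namely $f^* \Psi \Ra \Psi' g^*$, is again a 2-isomorphism. Finally, the hypotheses of Lemma~\ref{l:idempotent-comonads-compatible} demand that the units of both adjunctions be 2-isomorphisms: this is precisely the statement that $\Phi$ and $\Phi'$ are fully faithful (see Example~\ref{eg:comonad-from-adjunction}), which is exactly our assumption. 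Therefore Lemma~\ref{l:idempotent-comonads-compatible} applies and produces an $F$-compatibility between the associated idempotent comonads $S = LR = \Phi\Psi = S_\Phi$ and $S' = L'R' = \Phi'\Psi' = S_{\Phi'}$ with respect to $F = g^*$. Since by Example~\ref{eg:FM-comonad} these are indeed the idempotent comonads attached to $\Phi$ and $\Phi'$, this says exactly that $S_\Phi$ and $S_{\Phi'}$ are compatible with respect to $g^*$, as claimed.

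I do not expect any serious obstacle here: the proposition is essentially a packaging statement, and all the real work has already been done. The one point requiring a little care is bookkeeping — matching the names in Setting~\ref{set:two-adjunctions} (which has $\mathcal{C}'$ on the upper left, etc.) to the geometric diagram in Setting~\ref{set:FM-flat-BC}, and making sure the direction of $\alpha$ and its mate are the ones feeding into Lemma~\ref{l:idempotent-comonads-compatible}. In particular one should double-check that the $F$-compatibility \eqref{eq:compatibility-comonads} produced by the lemma, written in terms of $L, L', R, R', \epsilon, \epsilon'$, indeed unwinds to a statement about $g^* S_\Phi \Ra S_{\Phi'} g^*$ compatible with the counits; but this is immediate once the substitutions are made. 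Beyond that, the proof is a one-line invocation of Propositions~\ref{p:flat-BC-FM} and Lemma~\ref{l:idempotent-comonads-compatible}.
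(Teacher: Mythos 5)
Your proposal is correct and is essentially identical to the paper's own proof, which likewise cites Proposition~\ref{p:flat-BC-FM} and Lemma~\ref{l:idempotent-comonads-compatible}; you have simply spelled out the dictionary $F=g^*$, $G=f^*$, $(L,R)=(\Phi,\Psi)$, $(L',R')=(\Phi',\Psi')$ that the paper leaves implicit. The identifications and the role of full faithfulness (units being 2-isomorphisms) are all correctly matched.
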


\begin{proof}
  This is a consequence of Proposition~\ref{p:flat-BC-FM} and
  Lemma~\ref{l:idempotent-comonads-compatible}.
\end{proof}


\section{Semi-orthogonal decompositions and descent}
\label{sec:semi-decomp-desc}

The main goal of this section is 
the conservative descent Theorem~\ref{t-main-categorical}.

\subsection{Semi-orthogonal decompositions}
\label{sec:semi-orth-decomp}

General results on admissible subcategories and
semi-orthogonal 
decompositions of triangulated categories can be found in
\cite{bondal-kapranov-representable-functors} and
\cite[Appendix~A]{valery-olaf-matfak-semi-orth-decomp}.
We recall the basic definitions here.

\begin{definition}
Let $\mathcal{T}$ be a triangulated category.
A \define{right} (resp.\ \define{left}) \define{admissible
subcategory of} $\mathcal{T}$ 
is a strictly full triangulated subcategory $\mathcal{U}$ of $\mathcal{T}$
such that the inclusion functor 
$\mathcal{U} \to \mathcal{T}$ admits a right (resp.\ left) adjoint.
An \define{admissible} subcategory is a subcategory that is both
left and right admissible.
\end{definition}

We remind the reader that an adjoint functor of a triangulated
functor is triangulated (in a canonical way if the adjunction is
fixed), 
see e.g. \cite[\sptag{0A8D}]{stacks-project}.

\begin{example}
  \label{eg:id-comonad-right-admissible-subcat}
  Let $(S, \epsilon)$ be an idempotent comonad on a triangulated
  category $\mathcal{T}$. Then $\im S$ is a
  right admissible subcategory of $\mathcal{T}$, by
  Remark~\ref{rem:image-ip-comonad-right-admissible}.
  Any right admissible subcategory is of this form (by
  Example~\ref{eg:comonad-from-adjunction}). 
  Similarly, the essential image of an idempotent monad is a left
  admissible subcategory, and any left admissible subcategory is of
  this form.
\end{example}

\begin{definition}
\label{d:semi-orth}
Let $\mathcal{T}$ be a triangulated category.
A sequence $\mathcal{T}_1, \ldots, \mathcal{T}_n$ of strictly
full triangulated
subcategories of 
$\mathcal{T}$ is called \define{semi-orthogonal} if
$\mathcal{T}(A_i, A_j) = 0$ for all objects
$A_i \in \mathcal{T}_i$ and $A_j \in \mathcal{T}_j$ whenever $i >
j$. It is called \define{full} (in $\mathcal{T}$) if
$\mathcal{T}$ coincides with the smallest
strictly full triangulated subcategory of $\mathcal{T}$
that contains all the categories $\mathcal{T}_i$.
A \define{semi-orthogonal decomposition} of $\mathcal{T}$
is a full semi-orthogonal sequence 
$\mathcal{T}_1, \ldots, \mathcal{T}_n$
and is denoted as 
\begin{equation*}
  \mathcal{T} = \langle \mathcal{T}_1, \ldots, \mathcal{T}_n \rangle.
\end{equation*}
\end{definition}

\subsection{Complementary idempotent comonads and monads}
\label{sec:complementary-monads}

In the rest of this section, we always work in the 2-category of
triangulated categories. In particular, if we say that
$(\mathcal{T}, S, \epsilon, \delta)$ is a comonad then
$\mathcal{T}$ is a triangulated category, $S$ is a 
triangulated functor and $\epsilon$ and $\delta$ are
morphisms of triangulated functors.

\begin{definition}
  \label{d:complementary}
  Let $\mathcal{T}$ be a triangulated category.
  We say that an idempotent comonad $(S,\epsilon)$ and an
  idempotent monad $(T,\eta)$ on $\mathcal{T}$ are
  \define{complementary} if for any object $A \in \mathcal{T}$
  there is a morphism 
  $\partial_A$ such that 
  \begin{equation}
    \label{eq:perp-triangle}
    SA \xra{\epsilon_A} A \xra{\eta_A} T A \xra{\partial_A}
    \Sigma SA
  \end{equation}
  is a triangle.
  We also just say that $T$ is complementary to $S$ or that $S$
  is complementary to $T$. 
\end{definition}

\begin{lemma}
  \label{l:images-perp-(co)monads}
  If an idempotent comonad $S$ and an idempotent monad $T$
  on a triangulated category $\mathcal{T}$ are
  complementary, then 
  \begin{align}
   (\im S)^\perp & = \im T = \ker S,\\
    \leftidx{^\perp}{(\im T)}{} & = \im S = \ker T.
  \end{align}
  In particular, $\im S$ and $\im T$ are closed under direct
  summands in $\mathcal{T}$.
  Moreover, the morphism
  $\partial_A$ 
  in triangle \eqref{eq:perp-triangle} is unique,
  and mapping $A$ to the triangle \eqref{eq:perp-triangle}
  extends uniquely to a functor from $\mathcal{T}$ to the
  category of 
  triangles in $\mathcal{T}$.
\end{lemma}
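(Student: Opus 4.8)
\textbf{Proof plan for Lemma~\ref{l:images-perp-(co)monads}.}
The plan is to exploit the triangle \eqref{eq:perp-triangle} directly, together with the idempotency of $S$ and $T$ as recorded in Lemma~\ref{l:idempotent-comonad} and its monadic analog. First I would establish the characterizations of $\im S$ and $\im T$ as kernels. Since $\epsilon S = S\epsilon$ is a 2-isomorphism (Lemma~\ref{l:idempotent-comonad}), applying $S$ to \eqref{eq:perp-triangle} gives a triangle in which the first map $S\epsilon_A$ is an isomorphism, forcing $STA = 0$; hence $\im T \subseteq \ker S$. Conversely, if $SA = 0$ then \eqref{eq:perp-triangle} shows $\eta_A \colon A \xra{\sim} TA$, so $A \in \im T$; thus $\ker S \subseteq \im T$. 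The reverse inclusion $\im T \subseteq \ker S$ can alternatively be seen because for $A \in \im T$ we have $\eta_A$ an isomorphism (as $\eta T$ is a 2-isomorphism for an idempotent monad), so $\epsilon_A = 0$ and the triangle forces $SA = 0$. The symmetric argument with the roles of $S$ and $T$ (and the monadic version of Lemma~\ref{l:idempotent-comonad}) gives $\im S = \ker T$.

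Next I would identify the orthogonals. For $B \in \im S = \ker T$ and any $A$, apply $\mathcal{T}(-, B)$ to \eqref{eq:perp-triangle}: the term $\mathcal{T}(TA, B)$ — and hence also $\mathcal{T}(\Sigma^{\pm 1} TA, B)$ — can be computed using the fact that $B \in \im S$ means $\epsilon_B$ is an isomorphism while $TB = 0$; one shows $\mathcal{T}(TA, B) \cong \mathcal{T}(TA, SB)$ and then uses adjunction $(\text{incl}, \text{coreflection})$ for $\im S$, reducing to $\mathcal{T}(STA, B') $ in $\im S$ where $STA = 0$. So $\mathcal{T}(TA, B) = 0$, and the long exact sequence collapses to give $\mathcal{T}(A, B) \xsira{} \mathcal{T}(SA, B)$ naturally. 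This shows simultaneously that $\ker S = \im T \subseteq {}^\perp(\im S)$ and, running the computation for $A \in \ker S = \im T$, that ${}^\perp(\im T) \supseteq \im S$; the reverse inclusions follow because for $A \in {}^\perp(\im T)$ the map $\eta_A$, being a morphism into $\im T$ with target orthogonal to... more directly: $A \in {}^\perp(\im T)$ means $\mathcal{T}(A, TA) = 0$, so $\eta_A = 0$, so $\epsilon_A$ is a split epimorphism in the triangle, forcing $TA$ to be a summand of $\Sigma SA$; but $TA \in \im T$ and, combined with $\mathcal{T}(A, TA)=0$, one concludes $TA = 0$, i.e. $A \in \ker T = \im S$. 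The analogous argument gives $(\im S)^\perp = \ker S$. Closure under direct summands of $\im S$ and $\im T$ is then immediate: $\im S = \ker T$ is the kernel of an (additive, triangulated) functor, hence closed under summands, and symmetrically for $\im T$.

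For uniqueness of $\partial_A$ and functoriality, the standard argument applies: given the triangle \eqref{eq:perp-triangle}, if $\partial_A, \partial_A'$ both complete it, their difference factors through $\mathcal{T}(TA, \Sigma SA)$, and I would show this group vanishes — $\Sigma SA \in \im S$ while $TA \in \im T = (\im S)^\perp$ (using the orthogonality just proven, noting $(\im S)^\perp$ contains $\ker S = \im T$), so $\mathcal{T}(TA, \Sigma SA) = 0$. For functoriality, a morphism $A \to A'$ induces, via $\epsilon$ and $\eta$ being natural transformations, a morphism of the first two terms of the triangles; since $\mathcal{T}(TA, \Sigma SA') = 0$ (same orthogonality reason), the axioms of a triangulated category give a unique fill-in on the third term, and uniqueness upgrades this to a functor to the category of triangles. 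The main obstacle I anticipate is bookkeeping the orthogonality computations cleanly — specifically making sure that the Hom-vanishing $\mathcal{T}(TA, SB) = 0$ for $A, B \in \mathcal{T}$ arbitrary is deduced rigorously from idempotency (via $STA = 0$ and the coreflection adjunction for $\im S$), rather than assumed; everything else is a routine application of the long exact sequence associated to \eqref{eq:perp-triangle} and the triangle axioms.
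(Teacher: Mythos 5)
Your first paragraph is correct and efficiently establishes $\im T=\ker S$ and $\im S=\ker T$: the computations $STA=0$ and $TSA=0$ from invertibility of $S\epsilon=\epsilon S$ and $T\eta=\eta T$ are exactly right. The problem is that in the second and third paragraphs you systematically prove orthogonality in the wrong direction. The Hom-vanishing that actually holds, and that the lemma needs for $\im T\subseteq(\im S)^\perp$ and $\im S\subseteq{}^{\perp}(\im T)$, is $\mathcal{T}(\im S,\im T)=0$, i.e.\ $\mathcal{T}(SA,TB)=0$. What you set out to prove is $\mathcal{T}(TA,B)=0$ for $B\in\im S$, i.e.\ $\mathcal{T}(\im T,\im S)=0$ --- and this is false in general: $S$ and $T$ realize the semi-orthogonal decomposition $\mathcal{T}=\langle \im T,\im S\rangle$, and morphisms from $\im T$ to $\im S$ are precisely the gluing data that need not vanish (for Be\u{\i}linson's decomposition of $\D_\pf(\mathbb{P}^1_k)$ one has $\Hom(\mathcal{O},\mathcal{O}(1))\neq 0$). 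Your purported proof of this vanishing also uses the adjunction backwards: $S$ is the \emph{right} adjoint of the inclusion $\im S\hra\mathcal{T}$, so it computes morphisms \emph{out of} $\im S$, namely $\mathcal{T}(C,Y)\cong\mathcal{T}(C,SY)$ for $C\in\im S$; it does not convert $\mathcal{T}(TA,SB)$ into $\mathcal{T}(STA,-)$. The same reversal breaks the last paragraph: the obstruction group for uniqueness of $\partial_A$ and for the functorial fill-in is $\mathcal{T}(\Sigma SA,TA)$ (resp.\ $\mathcal{T}(\Sigma SA,TA')$), with source in $\im S$ and target in $\im T$ --- not $\mathcal{T}(TA,\Sigma SA)$, which contains $\partial_A$ itself and whose vanishing would force every triangle \eqref{eq:perp-triangle} to split.

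The repair is short and lands on the paper's argument. Given $f\colon SA\ra TB$, naturality of $\eta$ gives $\eta_{TB}\circ f=Tf\circ\eta_{SA}$, which factors through $TSA=0$ by your first paragraph; since $\eta_{TB}$ is invertible, $f=0$. This single vanishing $\mathcal{T}(\im S,\im T)=0$, combined with your correct kernel identifications and your (correct, if tersely argued) reverse inclusions ${}^{\perp}(\im T)\subseteq\im S$ and $(\im S)^\perp\subseteq\im T$, yields all four equalities, and it also kills the correct obstruction groups $\mathcal{T}(\Sigma SA,TA')$, so uniqueness of $\partial_A$ and functoriality follow from the standard argument (BBD, Corollaires~1.1.9 and~1.1.10), exactly as in the paper.
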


\begin{proof}
  We claim that $\mathcal{T}(\im S, \im T)=0$.
  Let $f \colon SA \ra TB$ be a morphism where $A$ and $B$ are objects
  of $\mathcal{T}$. Consider the following commutative diagram
  \begin{equation}
    \xymatrix{
      {S^2A} \ar[r]^-{\epsilon_{SA}} &
      {SA} \ar[r]^-{\eta_{SA}} \ar[d]_-{f} &
      {TSA} \ar[d]_-{Tf}\\
      &
      {TB} \ar[r]^-{\eta_{TB}} &
      {TTB.}
    }
  \end{equation}
  Its upper row can be completed to a triangle, and
  $\epsilon_{SA}$ and $\eta_{TB}$ are isomorphisms. Hence
  $TSA=0$ and $f=0$. This proves the claim, and the four
  equalities in the lemma follow immediately.
  Uniqueness of $\partial_A$ follows from
  \cite[Corollaire~1.1.10]{BBD}. Functoriality of the
  triangle follows from \cite[Corollaire~1.1.9]{BBD}.
\end{proof}

\begin{example}
  \label{eg:complementary->sod}
  Given an idempotent comonad $S$ and a complementary
  idempotent monad $T$ on a triangulated category
  $\mathcal{T}$, there is semi-orthogonal decomposition
  $\mathcal{T}=\langle \im T, \im S\rangle$. Any
  semi-orthogonal decomposition with two components arises in this
  way.
\end{example}

\begin{remark}[Existence of complementary comonads and monads]
  \label{rem:existence-perp-monad}
  Any idempotent comonad (resp.\ monad) on
  a triangulated category $\mathcal{T}$ admits
  a complementary idempotent monad (resp.\ comonad).
  This follows from well-known arguments, e.g., see
  \cite[Proposition~4.12.1]{krause-localization} using
  Remark~\ref{rem:ip-comonad-colocalization} and
  the corresponding statement for idempotent monads.
  Given an idempotent comonad $(S, \epsilon)$,
  the idea is to 
  complete for each object $A \in \mathcal{T}$ the morphism
  $\epsilon_A$ to a triangle~\eqref{eq:perp-triangle}, and to
  show that this extends to morphisms and in fact defines an
  idempotent monad $(T, \eta)$ in the 2-category of triangulated
  categories.
\end{remark}

\begin{remark}[Uniqueness of complementary comonads and monads]
  \label{rem:uniqueness-perp-monad}
  If an idempotent comonad $(S, \epsilon)$ on $\mathcal{T}$
  admits two complementary idempotent monads $(T, \eta)$ and
  $(T', \eta')$, then there is a unique 2-isomorphism $\nu \colon T
  \xRa{\sim} T'$ such $\nu \circ \eta =\eta'$. This follows from
  \cite[Corollaire 1.1.9]{BBD}
  or Proposition~\ref{p:complementary-monads-compatible}
  below for $\mathcal{T}=\mathcal{T'}$ and $F=\id_\mathcal{T}$.
  The dual statement is also true.
\end{remark}

\begin{remark}
  \label{rem:notation-perp-monad}
  Given an idempotent comonad $(S, \epsilon)$ on $\mathcal{T}$ we
  often use the notation $(S^\perp, \eta)$ for a
  complementary monad. Such a complementary monad always exists
  (Remark~\ref{rem:existence-perp-monad}) and is as unique as
  possible (Remark~\ref{rem:uniqueness-perp-monad}).
  The notation $S^\perp$ is motivated by the equality
  $(\im S)^\perp=\im (S^\perp)$, see Lemma~\ref{l:images-perp-(co)monads}.
\end{remark}

\subsection{Semi-orthogonal decompositions and idempotent comonads}
\label{sec:semi-decomp-and-comonads}

Given a sequence of idempotent comonads, we reformulate the
defining conditions that their essential images form a semi-orthogonal
decomposition in terms of vanishing conditions on certain
compositions of these comonads and their complementary monads.
This reformulation is a key ingredient in the proof of
the conservative descent Theorem~\ref{t-main-categorical} below.

\begin{proposition}
  \label{prop-semi-orthogonal}
  Let
  $S_1, S_2, \ldots, S_n$ be a sequence of idempotent
  comonads on a triangulated category $\mathcal{T}$.  
  Then the following statements
  are equivalent.
  \begin{enumerate}
  \item The sequence $\im S_1, \ldots, \im S_n$ of essential
    images is semi-orthogonal.
  \item The composition $S_iS_j$ vanishes for all $i > j$.
  \end{enumerate}
\end{proposition}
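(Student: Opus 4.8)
The plan is to prove both implications by passing between vanishing of $\Hom$-sets and vanishing of functors, using the counits of the comonads together with the facts that for an idempotent comonad $S$ the whiskered counit $S\epsilon$ is a $2$-isomorphism (Lemma~\ref{l:idempotent-comonad}) and that $\im S$ is by definition the essential image of the functor $S$. Write $\epsilon^{(i)}\colon S_i \Ra \id_{\mathcal{T}}$ for the counit of $S_i$.

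First I would record two elementary observations. \emph{(i)} If $C \in \im S_i$, say $C \cong S_i D$, then the component $\epsilon^{(i)}_C$ is an isomorphism: indeed $\epsilon^{(i)}_{S_i D} = S_i(\epsilon^{(i)}_D) = (S_i\epsilon^{(i)})_D$ is invertible by Lemma~\ref{l:idempotent-comonad}, and naturality of $\epsilon^{(i)}$ transports this to $C$. Moreover, by Remark~\ref{rem:image-ip-comonad-right-admissible} the inclusion $\im S_i \hookrightarrow \mathcal{T}$ has a right adjoint induced by $S_i$, so composition with $\epsilon^{(i)}_B$ identifies $\mathcal{T}(A, S_i B)$ with $\mathcal{T}(A, B)$ for every $A \in \im S_i$ and every object $B$. \emph{(ii)} If $i > j$ and $A_j \in \im S_j$, say $A_j \cong S_j D$, then $S_i A_j \cong S_i S_j D$; hence if $S_i S_j = 0$ as a functor, then $S_i A_j = 0$ for every $A_j \in \im S_j$.

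For the implication from ``$S_iS_j = 0$ for all $i > j$'' to semi-orthogonality, I would fix $i > j$, an object $A_i \in \im S_i$ and an object $A_j \in \im S_j$. By observation \emph{(ii)} we have $S_i A_j = 0$, and then observation \emph{(i)} gives $\mathcal{T}(A_i, A_j) \cong \mathcal{T}(A_i, S_i A_j) = 0$, which is exactly semi-orthogonality of $\im S_1, \dots, \im S_n$. For the converse, assume this sequence is semi-orthogonal, fix $i > j$ and an arbitrary object $B \in \mathcal{T}$, and consider $\epsilon^{(i)}_{S_j B}\colon S_i S_j B \to S_j B$. Its source lies in $\im S_i$ and its target in $\im S_j$ with $i > j$, so this morphism is zero. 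Applying $S_i$ shows that $(S_i\epsilon^{(i)})_{S_j B} = S_i(\epsilon^{(i)}_{S_j B})\colon S_i S_i S_j B \to S_i S_j B$ is the zero morphism; but $S_i\epsilon^{(i)}$ is a $2$-isomorphism by Lemma~\ref{l:idempotent-comonad}, so a zero component can only occur when $S_i S_j B = 0$. Since $B$ was arbitrary, $S_i S_j = 0$.

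I do not expect a genuine obstacle here; the only point requiring care is the bookkeeping between the essential image $\im S_i$ and the literal image of the functor $S_i$, i.e.\ replacing objects of $\im S_i$ by objects of the form $S_i D$ up to isomorphism, which is harmless both for $\Hom$-vanishing and for applying functors. The conceptual heart of the argument is observation \emph{(i)} — that the counit of an idempotent comonad restricts to an isomorphism on its essential image — which is immediate from the invertibility of $S\epsilon$.
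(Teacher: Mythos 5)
Your proof is correct, and while your forward implication is essentially the paper's argument in different packaging (you pass through the adjunction bijection $\mathcal{T}(A_i,S_iA_j)\cong\mathcal{T}(A_i,A_j)$ furnished by the counit, whereas the paper writes out the corresponding naturality square for $\epsilon$ and uses that $\epsilon_{S_2A}$ is invertible), your converse is genuinely different from the paper's. The paper proves the converse by introducing a complementary idempotent monad $S_i^\perp$ (via Remark~\ref{rem:existence-perp-monad}), forming the triangle $S_iS_jC \to S_jC \to S_i^\perp S_jC \to \Sigma S_iS_jC$, observing that the first map vanishes by semi-orthogonality so that $\partial$ splits, and then killing the splitting using Lemma~\ref{l:images-perp-(co)monads}. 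You instead observe that $\epsilon^{(i)}_{S_jB}\colon S_iS_jB\to S_jB$ is zero by semi-orthogonality, while $S_i$ applied to it is the component of the $2$-isomorphism $S_i\epsilon^{(i)}$ at $S_jB$ and hence invertible; being simultaneously invertible and zero (additivity of $S_i$ gives $S_i(0)=0$), it forces $S_iS_jB=0$. This is shorter, avoids the existence and orthogonality theory of complementary monads entirely, and in fact works in any additive (not necessarily triangulated) setting, whereas the paper's route needs triangles to construct $S_i^\perp$. The only point you should make explicit is the one-line verification that the bijection in your observation \emph{(i)} really is given by composition with $\epsilon^{(i)}_B$ (it follows from naturality of $\epsilon^{(i)}$ together with $S_i\epsilon^{(i)}=\epsilon^{(i)}S_i$ being invertible); for the forward direction you only need the easy surjectivity half, namely $f=\epsilon^{(i)}_{A_j}\circ\bigl(S_if\circ(\epsilon^{(i)}_{A_i})^{-1}\bigr)$.
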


\begin{proof}
  It is certainly enough to prove the statement in case $n=2$.
  So we need to prove that $\mathcal{T}(\im S_2, \im S_1)=0$ if and
  only if $S_2S_1=0$.

  The condition $\mathcal{T}(\im S_2, \im S_1)=0$
  is equivalent to $\im S_1 \subset (\im S_2)^\perp$.
  Lemma~\ref{l:images-perp-(co)monads}
  shows
  $(\im S_2)^\perp=\ker S_2$ since
  $S_2$ has a complementary monad
  (see Remark~\ref{rem:existence-perp-monad}).
  Finally, the condition $S_2S_1=0$ is equivalent to 
  $\im S_1 \subset \ker S_2$.
  %
  %
  %
\end{proof}

\begin{proposition}
\label{prop-decomposition}
Let $S_1, S_2, \ldots, S_n$ be a sequence of idempotent
comonads on a triangulated category $\mathcal{T}$, with complementary idempotent monads $S_1^\perp, \dots, S_n^\perp$. 
Assume that the sequence $\im S_1, \ldots, \im S_n$ is
semi-orthogonal. Then the following statements are equivalent:
\begin{enumerate}
\item
The sequence $\im S_1, \ldots, \im S_n$ is full, i.e., it
forms a semi-orthogonal decomposition
of~$\mathcal{T}$.
\item
The composition $S_1^\perp S_2^\perp \cdots S_n^\perp$ vanishes on
$\mathcal{T}$.
\end{enumerate}
\end{proposition}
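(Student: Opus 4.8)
\emph{Proof plan.} The plan is to prove the two implications separately; in both directions the engine is dévissage along the functorial triangles $S_i A \to A \to S_i^\perp A \to \Sigma S_i A$ attached to the complementary pairs $(S_i, S_i^\perp)$ (Definition~\ref{d:complementary}). Write $P := S_1^\perp S_2^\perp \cdots S_n^\perp$, so that statement~(b) is the vanishing of $P$, and let $\mathcal{A} \subseteq \mathcal{T}$ denote the smallest strictly full triangulated subcategory containing $\im S_1, \ldots, \im S_n$; statement~(a) says exactly that $\mathcal{A} = \mathcal{T}$.

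For (b)$\Rightarrow$(a) I would argue directly, without even using the standing semi-orthogonality hypothesis. Given $A \in \mathcal{T}$, form the tower $A_n := A$ and $A_{k-1} := S_k^\perp A_k$, so that $A_{n-k} = S_{n-k+1}^\perp \cdots S_n^\perp A$ and in particular $A_0 = PA = 0$. The triangle $S_k A_k \to A_k \to A_{k-1} \to \Sigma S_k A_k$ has $S_k A_k \in \im S_k \subseteq \mathcal{A}$ as one vertex, so if $A_{k-1} \in \mathcal{A}$ then also $A_k \in \mathcal{A}$. A downward induction starting from $A_0 = 0 \in \mathcal{A}$ then gives $A_k \in \mathcal{A}$ for all $k$, hence $A = A_n \in \mathcal{A}$; as $A$ was arbitrary, $\mathcal{A} = \mathcal{T}$.

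For (a)$\Rightarrow$(b), the point is that $P$, being a composition of triangulated functors, is triangulated, so $\mathcal{K} := \{A \in \mathcal{T} : PA = 0\}$ is a strictly full triangulated subcategory of $\mathcal{T}$. Since $\mathcal{A} = \mathcal{T}$ by (a), it suffices to show $\im S_i \subseteq \mathcal{K}$ for each $i$. Here is where semi-orthogonality enters. By Lemma~\ref{l:images-perp-(co)monads}, $\im S_i = \ker S_i^\perp$, so for $A \in \im S_i$ the triangle of $S_i^\perp$ collapses to an isomorphism $S_i A \sira A$; applying $S_j$ for $j > i$ and invoking $S_j S_i = 0$ (Proposition~\ref{prop-semi-orthogonal}, using semi-orthogonality) yields $S_j A = 0$, so the triangle of $S_j^\perp$ collapses to an isomorphism $A \sira S_j^\perp A$. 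Iterating this from $j = n$ down to $j = i+1$ shows that $S_{i+1}^\perp \cdots S_n^\perp A$ is isomorphic to $A$, hence lies in $\ker S_i^\perp$, whence $S_i^\perp S_{i+1}^\perp \cdots S_n^\perp A = 0$ and a fortiori $PA = 0$. Thus $\im S_i \subseteq \mathcal{K}$ for every $i$, so $\mathcal{K} = \mathcal{T}$ and $P = 0$.

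I expect the only delicate point to be the bookkeeping in the last step of (a)$\Rightarrow$(b): one must keep careful track of which complementary monad is applied to which object while peeling off the factors $S_n^\perp, S_{n-1}^\perp, \ldots, S_{i+1}^\perp$, together with the observation that each such factor, applied to an object of $\im S_i$, returns a copy of the same object --- it is precisely this that lets one finally invoke the vanishing of $S_i^\perp$ on $\im S_i$. Everything else --- the closure properties of $\mathcal{K}$, the collapsing of a triangle when one of its vertices is zero, and the two inductions --- is routine once the triangles of Definition~\ref{d:complementary} are in hand.
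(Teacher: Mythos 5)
Your proof is correct. The implication (b)$\Rightarrow$(a) is the same dévissage the paper uses (including the observation that semi-orthogonality is not needed there), but your argument for (a)$\Rightarrow$(b) takes a genuinely different route. The paper fixes a test object $B \in \im S_j$ and shows $\mathcal{T}(B, S_1^\perp\cdots S_n^\perp A)=0$ by a descending induction on $i$ from $j$ to $1$, applying $\mathcal{T}(B,-)$ to the triangles \eqref{eqn-tri} and using $\mathcal{T}(\im S_j,\im S_i)=0$; it then concludes $PA=0$ because the $\im S_j$ generate $\mathcal{T}$. You instead compute $P$ directly on the generators: for $A\in\im S_i$ the factors $S_j^\perp$ with $j>i$ act as the identity up to isomorphism (since $S_jS_i=0$ collapses the triangle), and then $S_i^\perp$ kills the result because $\im S_i=\ker S_i^\perp$; the conclusion follows because $\ker P$ is a strictly full triangulated subcategory containing all $\im S_i$, hence all of $\mathcal{T}=\mathcal{A}$. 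Your version is slightly more self-contained in that it avoids the Hom-vanishing induction and the implicit final step ``no maps from any generator implies zero,'' at the cost of the bookkeeping you flag (each $S_j^\perp$, $j>i$, must be seen to preserve $\im S_i$ up to isomorphism before the next factor is applied, which works because $\im S_i$ is strictly full); both arguments use semi-orthogonality only through $S_jS_i=0$ for $j>i$, via Proposition~\ref{prop-semi-orthogonal}.
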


\begin{proof}
  Let $\mathcal{T}'$ denote the triangulated hull of $\im S_1,
  \ldots, \im S_n$ in $\mathcal{T}$.

Assume that $S_1^\perp S_2^\perp \cdots S_n^\perp$ vanishes.
We want to show that any object $A$ in $\mathcal{T}$ lies
in $\mathcal{T}'$. Diagram~\eqref{eq:cofiltration} below
illustrates the following proof in case $n=3$.
By assumption, we have $S_1^\perp \dots S_n^\perp A =0$, and this
object trivially lies in
$\mathcal{T}'$. By induction over $i \in \{1, \dots, n\}$
assume that $S_i^\perp \cdots S_n^\perp A$
lies in $\mathcal{T}'$ and consider
the triangle
\begin{equation}
  \label{eqn-tri}
  S_iS_{i+1}^\perp \cdots S_n^\perp A \xra{\epsilon_i}
  S_{i+1}^\perp \cdots S_n^\perp A \xra{\eta_i}
  S_i^\perp S_{i+1}^\perp \cdots S_n^\perp A \to.
\end{equation}
Since the first object lies in $\im S_i \subset \mathcal{T}'$, we
deduce that also
$S_{i+1}^\perp \cdots S_n^\perp A$ lies in~$\mathcal{T}'$. So
eventually we get $A \in \mathcal{T}'$.
This shows that $\mathcal{T}'=\mathcal{T}$, i.e., the sequence
$\im S_1, \ldots, \im S_n$ is full.
Note that semi-orthogonality was in fact not used for this implication.

Conversely, assume that $\mathcal{T} = \mathcal{T}'$. 
We need to show $S_1^\perp S_2^\perp\cdots
S_n^\perp A=0$ for any object $A \in \mathcal{T}$. It is enough to show that 
$\mathcal{T}(B, S_1^\perp\cdots S_n^\perp A) = 0$
for any $j \in \{1, \dots, n\}$ and
any $B \in \im S_j$. Fix such $j$ and $B$. 

We prove that $\mathcal{T}(B, S_i^\perp\cdots S_n^\perp A) = 0$ by
descending induction over 
$i \in \{1, \ldots, j\}$. The case $i=j$ is obvious since $(\im
S_j^\perp)=(\im S_j)^\perp$.
Assume that $i < j$ and that we already know that 
$\mathcal{T}(B, S_{i+1}^\perp\cdots S_n^\perp A) = 0$.
Applying $\mathcal{T}(B, -)$ to the triangle \eqref{eqn-tri} and
using semi-orthogonality $\mathcal{T}(\im S_j, \im S_i)=0$ we obtain the
isomorphism 
\begin{equation}
  0 =
  \mathcal{T}(B, S_{i+1}^\perp\cdots S_n^\perp A)
  \sira \mathcal{T}(B, S_i^\perp S_{i+1}^\perp\cdots S_n^\perp A).
\end{equation}
By induction, this proves what we need.
\end{proof}

\begin{example}
  \label{eg:proof-n=3}
  The following diagram illustrates the first argument of the
  above proof in case $n=3$.
  It shows that any object $A$ can be written as an iterated extension of
  an object of $\im S_1$ by an object of $\im S_2$ by an object
  of $\im S_3$.
  \begin{equation}
    \label{eq:cofiltration}
    \xymatrix{
      {A} \ar[r]^-{\eta_3} &
      {S_3^\perp A} \ar[r]^-{\eta_2} \ar@{~>}[dl] &
      {S_2^\perp S_3^\perp A} \ar[r]^-{\eta_1} \ar@{~>}[dl] &
      {S_1^\perp S_2^\perp S_3^\perp A} \ar@{}[r]|-{=}
      \ar@{~>}[dl] &
      {0}\\
      {S_3A} \ar[u]^-{\epsilon_3} &
      {S_2 S_3^\perp A} \ar[u]^-{\epsilon_2} &
      {S_1 S_2^\perp S_3^\perp A} \ar[u]^-{\epsilon_1}_-{\sim}
    }
  \end{equation}
\end{example}

\subsection{Compatibilities and complementary comonads and monads}
\label{sec:comp-and-perp}

\begin{proposition}
  \label{p:complementary-monads-compatible}
  Let $(S, \epsilon)$ and $(S', \epsilon')$ be idempotent
  comonads on triangulated categories $\mathcal{T}$ and
  $\mathcal{T}'$, 
  and let
  $(S^\perp, \eta)$ and $(S'^\perp, \eta')$
  be complementary idempotent monads.
  Let $F \colon \mathcal{T} \ra \mathcal{T}'$ be a 
  triangulated functor. 
  Given any $F$-compatibility 
  \begin{equation}
    \sigma \colon S \xRa{F} S'
  \end{equation}
  of comonads there is a unique
  $F$-compatibility  
  \begin{equation}
    \sigma^\perp \colon S^\perp \xRa{F} S'^\perp
  \end{equation}
  of monads such that 
  \begin{equation}
    \xymatrix{
      {FS} \ar@{=>}[r]^-{F\epsilon} \ar@{=>}[d]_-{\sigma}^-{\sim}
      & 
      {F} \ar@{=>}[r]^-{F\eta} \ar@{=>}[d]_-{\id_F}^-{\sim} &
      {FS^\perp} \ar@{=>}[r] \ar@{=>}[d]_-{\sigma^\perp}^-{\sim} &
      {\Sigma FS} \ar@{=>}[d]_-{\Sigma\sigma}^-{\sim}\\
      {S'F} \ar@{=>}[r]^-{\epsilon'F} &
      {F} \ar@{=>}[r]^-{\eta'F} &
      {S'^\perp F} \ar@{=>}[r] &
      {\Sigma S'F} 
    }
  \end{equation}
  is a functorial isomorphism of triangles, i.e.\ plugging in any
  object $A \in \mathcal{T}$ yields an isomorphism of triangles in
  $\mathcal{T}'$, and these morphisms are 
  compatible with morphisms in $\mathcal{T}$. 
\end{proposition}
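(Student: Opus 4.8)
The plan is to construct $\sigma^\perp$ objectwise by completing commutative squares to morphisms of triangles, and then to use the resulting uniqueness to see that these morphisms assemble into a $2$-isomorphism with the required properties. Fix an object $A \in \mathcal{T}$. Applying the triangulated functor $F$ to the complementarity triangle $SA \xra{\epsilon_A} A \xra{\eta_A} S^\perp A \xra{\partial_A} \Sigma SA$ produces a triangle $FSA \xra{F\epsilon_A} FA \xra{F\eta_A} FS^\perp A \xra{F\partial_A} \Sigma FSA$ in $\mathcal{T}'$, while complementarity of $S'$ and $S'^\perp$ provides the triangle $S'FA \xra{\epsilon'_{FA}} FA \xra{\eta'_{FA}} S'^\perp FA \xra{\partial'_{FA}} \Sigma S'FA$. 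The defining identity $(\epsilon'F) \circ \sigma = F\epsilon$ of the compatibility $\sigma$ says exactly that $(\sigma_A, \id_{FA})$ is a commutative square on the first two terms of these triangles, so we may complete it to a morphism of triangles $(\sigma_A, \id_{FA}, \sigma^\perp_A, \Sigma\sigma_A)$ for a suitable $\sigma^\perp_A \colon FS^\perp A \ra S'^\perp FA$. Since $\sigma_A$ and $\id_{FA}$ are isomorphisms, so is $\sigma^\perp_A$.

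The key observation is that $\sigma^\perp_A$ is already uniquely determined by the requirement $\sigma^\perp_A \circ F\eta_A = \eta'_{FA}$. Indeed, the difference of two morphisms satisfying this is killed by precomposition with $F\eta_A$, hence --- applying $\mathcal{T}'(-, S'^\perp FA)$ to the rotated triangle $FA \xra{F\eta_A} FS^\perp A \xra{F\partial_A} \Sigma FSA$ --- it factors through $F\partial_A$ via a morphism in $\mathcal{T}'(\Sigma FSA, S'^\perp FA)$. But $\sigma_A$ exhibits $FSA$, and therefore also $\Sigma FSA$, as an object of $\im S'$, whereas $S'^\perp FA \in \im(S'^\perp) = (\im S')^\perp$ by Lemma~\ref{l:images-perp-(co)monads}; hence $\mathcal{T}'(\Sigma FSA, S'^\perp FA) = 0$ and the difference vanishes. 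This is the uniqueness supplied by \cite[Corollaire~1.1.9]{BBD}, and it is the one step that genuinely uses that $\sigma$ is a compatibility rather than an arbitrary $2$-isomorphism; I expect this $\Hom$-vanishing to be the only real obstacle, everything else being formal.

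From the objectwise uniqueness the remaining assertions follow. For a morphism $a \colon A \ra B$ in $\mathcal{T}$, the morphisms $S'^\perp F(a) \circ \sigma^\perp_A$ and $\sigma^\perp_B \circ FS^\perp(a)$ are both third components of a morphism of triangles from the ($F$-image of the) complementarity triangle of $S$ at $A$ to that of $S'$ at $B$ with the same first two components --- this uses naturality of $\sigma$ and the functoriality of the complementarity triangles from Lemma~\ref{l:images-perp-(co)monads} --- so they coincide by the same vanishing argument as above; thus $\sigma^\perp$ is a natural transformation, and it is a $2$-isomorphism $FS^\perp \siRa S'^\perp F$ because each $\sigma^\perp_A$ is invertible. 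The identity $\sigma^\perp \circ F\eta = \eta'F$ built into the construction is precisely the defining condition of an $F$-compatibility of monads, so $\sigma^\perp \colon S^\perp \xRa{F} S'^\perp$ is one; the displayed diagram is then a functorial isomorphism of triangles, its three squares commuting respectively by the compatibility $\sigma$, by this identity, and because each $(\sigma_A, \id_{FA}, \sigma^\perp_A, \Sigma\sigma_A)$ was chosen as a morphism of triangles, with functoriality in $A$ being the naturality just established together with the functoriality of the two rows. Finally, any $F$-compatibility of monads $S^\perp \xRa{F} S'^\perp$ satisfies $\sigma^\perp_A \circ F\eta_A = \eta'_{FA}$ by definition, so by the objectwise uniqueness it must equal the $\sigma^\perp$ constructed above; this is the claimed uniqueness.
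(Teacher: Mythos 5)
Your proposal is correct and follows essentially the same route as the paper: construct $\sigma^\perp_A$ by completing the commutative square to a morphism of triangles, and obtain uniqueness (and hence naturality and the uniqueness of the compatibility) from the vanishing of $\mathcal{T}'(\Sigma FSA, S'^\perp FA)$, using that $\sigma_A$ places $FSA$ in $\im S'$ while $S'^\perp FA \in (\im S')^\perp$. The only differences are cosmetic — you spell out the BBD-style uniqueness argument explicitly where the paper cites \cite[Corollaire~1.1.10]{BBD}, and you omit the (routine) remark that $\sigma^\perp$ is compatible with suspensions.
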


\begin{remark}
  The map
  $\sigma \mapsto \sigma^\perp$ defines in fact a
  bijection between the set of $F$-compatibilities from $S$ to
  $S'$ and the set of $F$-compatibilities from $S^\perp$ to
  $S'^\perp$.
\end{remark}

\begin{proof}
  Let $\sigma$ be an $F$-compatibility as above.
  For any $A \in \mathcal{T}$ complete the partial morphism
  \begin{equation}
    \xymatrix{
      {FSA} \ar[r]^-{F\epsilon_A} \ar[d]_-{\sigma_A}^-{\sim} &
      {FA} \ar[r]^-{F\eta_A} \ar@{=}[d] &
      {FS^\perp A} \ar[r] \ar@{..>}[d]_-{\sigma^\perp_A} &
      {\Sigma FSA} \ar[d]_-{\Sigma\sigma_A}^-{\sim}\\
      {S'FA} \ar[r]^-{\epsilon'_{FA}} &
      {FA} \ar[r]^-{\eta'_{FA}} &
      {S'^\perp FA} \ar[r] &
      {\Sigma S'FA} 
    }
  \end{equation}
  of triangles by the dotted arrow to an (iso)morphism of
  triangles. 
  This dotted arrow is already uniquely specified by the
  requirement that the square in the middle is commutative, by 
  \cite[Corollaire 1.1.10]{BBD}, since the
  object $S'^\perp FA$ lies in $\im (S'^\perp)=(\im S')^\perp$
  (by Lemma~\ref{l:images-perp-(co)monads}),
  and the objects 
  $FSA$ 
  and $\Sigma FSA$ lie in $\im S'$ since $\sigma_A$ is an
  isomorphism.
  Similarly, the morphisms $(\sigma_A, \id_{FA}, \sigma^\perp_A)$ of
  triangles, for $A \in \mathcal{T}$, 
  are easily seen to be compatible with morphisms $A \ra
  A'$ in $\mathcal{T}$. In particular, $\sigma^\perp \colon
  FS^\perp \xRa{A} S'^\perp F$ is a 2-isomorphism between
  functors, and it is easy to see that it is compatible with
  suspensions, i.e., it is a 2-isomorphism between triangulated
  functors. 
  This proves the proposition.
\end{proof}

\begin{example}
  \label{eg:FM-perp-monads-f*-compatible}
  The conclusion of
  Proposition~\ref{p:FM-comonads-f*-compatible} can be extended:
  the complementary idempotent monads $S_\Phi^\perp$ and
  $S_{\Phi'}^\perp$ are also $g^*$-compatible, by 
  Proposition~\ref{p:complementary-monads-compatible}.
\end{example}

\subsection{Conservative descent}
\label{sec:conservative-descent}

\begin{theorem}[Conservative descent]
\label{t-main-categorical}
\label{t:descent-comonadic-sod}
Let $F \colon \mathcal{T} \ra \mathcal{T}'$ be a conservative 
triangulated functor.
Let $S_1, \dots, S_n$ and $S_1', \dots, S_n'$ be sequences of idempotent comonads on $\mathcal{T}$ and $\mathcal{T}'$,
respectively,
and assume that $S_i$ and $S_i'$ are compatible
with respect to $F$ for each~$i$.
If the sequence
\begin{equation}
\label{eq:sod-upstairs}
\im S'_1, \ldots, \im S'_n
\end{equation}
of essential images is semi-orthogonal in $\mathcal{T}'$,
then so is the sequence
\begin{equation}
\label{eq:sod-downstairs}
\im S_1, \ldots, \im S_n
\end{equation}
in $\mathcal{T}$. Both sequences consist of right admissible
subcategories of $\mathcal{T}'$ and $\mathcal{T}$, respectively. 
Moreover, if \eqref{eq:sod-upstairs} is a semi-orthogonal decomposition of $\mathcal{T}'$,
then \eqref{eq:sod-downstairs} is a semi-orthogonal decomposition of $\mathcal{T}$.
\end{theorem}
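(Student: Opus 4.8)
The plan is to reduce both assertions to the vanishing criteria of Propositions~\ref{prop-semi-orthogonal} and~\ref{prop-decomposition}, and to transport the relevant vanishing statements from $\mathcal{T}'$ to $\mathcal{T}$ by pasting the given $F$-compatibilities and then invoking conservativity of $F$. For each $i$, fix an $F$-compatibility $\sigma_i \colon FS_i \xRa{\sim} S_i'F$; such a 2-isomorphism exists by hypothesis. Fix also complementary idempotent monads $S_i^\perp$ on $\mathcal{T}$ and $S_i'^\perp$ on $\mathcal{T}'$, which exist by Remark~\ref{rem:existence-perp-monad}.

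First I would prove the semi-orthogonality statement. By Proposition~\ref{prop-semi-orthogonal} applied in $\mathcal{T}$, it suffices to show $S_iS_j = 0$ for all $i > j$. Since $F$ is a conservative triangulated functor, it reflects zero objects (Example~\ref{eg:conservative}), so it detects the vanishing of a functor; hence it is enough to show $FS_iS_j = 0$. Whiskering $\sigma_i$ by $S_j$ on the right and then $\sigma_j$ by $S_i'$ on the left yields a 2-isomorphism $FS_iS_j \xRa{\sim} S_i'S_j'F$. As $\im S_1', \ldots, \im S_n'$ is semi-orthogonal, Proposition~\ref{prop-semi-orthogonal} applied in $\mathcal{T}'$ gives $S_i'S_j' = 0$ for $i > j$, whence $FS_iS_j = 0$ and therefore $S_iS_j = 0$. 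This proves that \eqref{eq:sod-downstairs} is semi-orthogonal.

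Next I would treat the decomposition statement, assuming \eqref{eq:sod-upstairs} is a semi-orthogonal decomposition. Since \eqref{eq:sod-downstairs} is already known to be semi-orthogonal, Proposition~\ref{prop-decomposition} reduces the claim to the vanishing of the composite $S_1^\perp S_2^\perp \cdots S_n^\perp$ on $\mathcal{T}$. By Proposition~\ref{p:complementary-monads-compatible}, each $\sigma_i$ induces an $F$-compatibility $\sigma_i^\perp \colon FS_i^\perp \xRa{\sim} S_i'^\perp F$ of the complementary monads. Pasting these 2-isomorphisms one after another, appropriately whiskered, produces a 2-isomorphism $FS_1^\perp S_2^\perp \cdots S_n^\perp \xRa{\sim} S_1'^\perp S_2'^\perp \cdots S_n'^\perp F$. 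Applying Proposition~\ref{prop-decomposition} in $\mathcal{T}'$ to the semi-orthogonal decomposition \eqref{eq:sod-upstairs} gives $S_1'^\perp \cdots S_n'^\perp = 0$, hence $FS_1^\perp \cdots S_n^\perp = 0$, and conservativity of $F$ forces $S_1^\perp \cdots S_n^\perp = 0$. A final application of Proposition~\ref{prop-decomposition}, this time in $\mathcal{T}$, shows that \eqref{eq:sod-downstairs} is a semi-orthogonal decomposition.

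I do not expect a genuine obstacle: the argument is a formal assembly of the preparatory results, the conceptual content having been absorbed into the notion of $F$-compatibility (Definition~\ref{d:compatibility}) and its behaviour under passing to complementary monads (Proposition~\ref{p:complementary-monads-compatible}). The only points requiring a little care are the bookkeeping of whiskering the $\sigma_i$ (respectively the $\sigma_i^\perp$) so that the iterated composites are well-formed, and the remark that a conservative triangulated functor, reflecting zero objects, detects the vanishing of an endofunctor.
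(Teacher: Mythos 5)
Your proposal is correct and follows essentially the same route as the paper's proof: whiskering the chosen $F$-compatibilities to obtain $FS_iS_j \cong S_i'S_j'F$ and $FS_1^\perp\cdots S_n^\perp \cong S_1'^\perp\cdots S_n'^\perp F$, then combining the vanishing criteria of Propositions~\ref{prop-semi-orthogonal} and~\ref{prop-decomposition} with conservativity of $F$ and Proposition~\ref{p:complementary-monads-compatible}. No gaps.
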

\begin{proof}
  By Example~\ref{eg:id-comonad-right-admissible-subcat}, 
  both sequences consist of right admissible
  subcategories of $\mathcal{T}'$ and $\mathcal{T}$, respectively.

  Choose $F$-compatibilities 
  $S_i \xRa{F} S_i'$. They are
  given by 2-isomorphisms
  $\sigma_i \colon F S_i \xRa{\sim} S_i'F$.
  We obtain 2-isomorphisms
  \begin{equation}
    FS_iS_j \xRa[\sim]{\sigma_i S_j}
    S_i'FS_j \xRa[\sim]{S_i'\sigma_j}
    S_i'S_j'F.
  \end{equation}
  We use Proposition~\ref{prop-semi-orthogonal} twice.
  Semi-orthogonality of $\im S_1', \dots, \im S_n'$
  gives $S_i'S_j'=0$ for $i>j$
  and hence $FS_iS_j=0$. Since $F$ is conservative
  we deduce $S_iS_j=0$
  (Example~\ref{eg:conservative}), so 
  $\im S_1, \dots, \im S_n$ is semi-orthogonal.

  Proposition~\ref{p:complementary-monads-compatible} provides
  $F$-compatibilities $S_i^\perp \xRa{F} S_i'^\perp$ given by 
  2-isomorphisms
  $\sigma_i^\perp \colon FS_i^\perp \xRa{\sim} S_i'^\perp F$.
  We obtain 2-isomorphisms
  \begin{equation}
    FS_1^\perp S_2^\perp \dots S_n^\perp
    \xRa[\sim]{\sigma^\perp_1 S_2^\perp \dots S_n^\perp}
    S_1'^\perp F S_2^\perp \dots S_n^\perp
    \xRa[\sim]{}
    \dots
    \xRa[\sim]{}
    S_1'^\perp S_2'^\perp \dots S_n'^\perp F.
  \end{equation}
  Assume that \eqref{eq:sod-upstairs} is a semi-orthogonal
  decomposition of $\mathcal{T}'$.
  Then the expression on the right vanishes by 
  Proposition~\ref{prop-decomposition}.
  Hence the expression on the left vanishes and 
  we get 
  $S_1^\perp S_2^\perp \cdots S_n^\perp=0$
  because $F$ is conservative. 
  Proposition~\ref{prop-decomposition}
  then shows that \eqref{eq:sod-downstairs} is a semi-orthogonal
  decomposition of $\mathcal{T}$ since we already know that it is
  a semi-orthogonal sequence.
\end{proof}

\subsection{Induced semi-orthogonal decompositions}
\label{sec:induc-semi-decomp}

Let $\mathcal{T}'$ be a strictly full triangulated subcategory of a
triangulated category $\mathcal{T}$.
Let $(S, \epsilon)$ be an idempotent comonad on
$\mathcal{T}$ such that 
$S$ restricts to a functor $S' \colon \mathcal{T}' \ra \mathcal{T}'$.
Then we
obtain by restriction an idempotent comonad $(S', \epsilon')$ on
$\mathcal{T}'$.  
Moreover, the universal property of the
Verdier quotient 
$Q \colon \mathcal{T} \ra \mathcal{T}/\mathcal{T}'$
shows that there is a unique triangulated 
functor $S'' \colon \mathcal{T}/\mathcal{T}' \ra \mathcal{T}/\mathcal{T}'$
satisfying $S'' Q = Q S$. The 2-morphism $\epsilon$ descends similarly and we obtain an idempotent comonad
$(S'', \epsilon'')$ on $\mathcal{T}/\mathcal{T}'$. 

\begin{lemma}
  \label{l:idempotent-comonad-orthogonal-restrict}
  Let $S$ be an idempotent comonad on a triangulated
  category $\mathcal{T}$. Let $\mathcal{T}'$ be a strictly full
  triangulated subcategory of $\mathcal{T}$ such that $S$ preserves
  $\mathcal{T}'$. Denote the induced idempotent comonads on
  $\mathcal{T}'$ and $\mathcal{T}/\mathcal{T}'$ by $S'$ and
  $S''$, respectively. 
  Let $S^\perp$ be an idempotent monad which is complementary to
  $S$. Then $S^\perp$
  restricts to an
  idempotent monad $(S^\perp)'$ which is complementary to $S'$
  and induces an 
  idempotent monad $(S^\perp)''$ which is complementary to
  $S''$, i.e., 
  $(S^\perp)'=S'^\perp$ and $(S^\perp)''=(S'')^\perp$. 
\end{lemma}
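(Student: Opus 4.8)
The plan is to verify the two complementarity assertions by checking that the defining triangle (\ref{eq:perp-triangle}) for $S$, which exists in $\mathcal{T}$ for every object, restricts to $\mathcal{T}'$ and descends to $\mathcal{T}/\mathcal{T}'$. First I would recall that, since $S$ is an idempotent comonad on $\mathcal{T}$ and $S^\perp$ is complementary to it, we have for each $A \in \mathcal{T}$ a distinguished triangle
\begin{equation*}
  SA \xra{\epsilon_A} A \xra{\eta_A} S^\perp A \xra{\partial_A} \Sigma SA.
\end{equation*}
This triangle is functorial in $A$ by Lemma~\ref{l:images-perp-(co)monads}.

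For the subcategory $\mathcal{T}'$: if $A$ lies in $\mathcal{T}'$, then $SA$ lies in $\mathcal{T}'$ by hypothesis, hence so does the cone $S^\perp A$, because $\mathcal{T}'$ is a strictly full triangulated subcategory. Thus $S^\perp$ restricts to an endofunctor $(S^\perp)'$ of $\mathcal{T}'$, and the restricted triangle exhibits $(S^\perp)'$ as complementary to the restricted comonad $S'$ in the sense of Definition~\ref{d:complementary}. Since a comonad/monad complementary to an idempotent (co)monad is automatically idempotent (by Remark~\ref{rem:existence-perp-monad} together with the uniqueness in Remark~\ref{rem:uniqueness-perp-monad}, or directly: complementarity forces $\im(S^\perp)' = (\im S')^\perp$ to be the kernel of $S'$, so the unit $\eta'$ is invertible on it), we conclude $(S^\perp)' = S'^\perp$, where the right-hand side is a complementary monad for $S'$ — unique as unique can be by Remark~\ref{rem:uniqueness-perp-monad}.

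For the Verdier quotient: let $Q \colon \mathcal{T} \ra \mathcal{T}/\mathcal{T}'$ be the quotient functor, which is triangulated and sends $\mathcal{T}'$ to zero. Applying $Q$ to the triangle above and using $S''Q = QS$ and the analogous relation for $S^\perp$, we get for each object $QA$ of $\mathcal{T}/\mathcal{T}'$ a distinguished triangle
\begin{equation*}
  S''QA \xra{\epsilon''_{QA}} QA \xra{\eta''_{QA}} (S^\perp)''QA \xra{} \Sigma S''QA,
\end{equation*}
where $(S^\perp)''$ denotes the functor induced on the quotient by $S^\perp$ (its existence is guaranteed by the universal property exactly as for $S''$, since $S^\perp$ preserves $\mathcal{T}'$). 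Every object of $\mathcal{T}/\mathcal{T}'$ is isomorphic to some $QA$, and the triangle is functorial by the functoriality of the original triangle together with the universal property, so this exhibits $(S^\perp)''$ as an idempotent monad complementary to $S''$; hence $(S^\perp)'' = (S'')^\perp$ by Remark~\ref{rem:uniqueness-perp-monad}.

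The only genuine subtlety — and what I expect to be the main point requiring care rather than a real obstacle — is checking that $S^\perp$ genuinely preserves $\mathcal{T}'$ so that the induced functor $(S^\perp)''$ is well-defined: this follows from the observation in the first paragraph that $S^\perp A$ is the cone of $\epsilon_A \colon SA \to A$, hence lies in $\mathcal{T}'$ whenever $A$ does. One should also note that the counit and unit $2$-morphisms $\epsilon, \eta, \partial$ restrict and descend compatibly, which is automatic from functoriality of the triangle (Lemma~\ref{l:images-perp-(co)monads}) and the $2$-categorical universal property of the Verdier quotient; I would state this but not belabor it.
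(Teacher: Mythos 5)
Your proposal is correct and follows essentially the same route as the paper: use the complementarity triangle to see that $S^\perp A$ lies in $\mathcal{T}'$ whenever $A$ does (since $SA\in\mathcal{T}'$ and $\mathcal{T}'$ is a strictly full triangulated subcategory), so that $S^\perp$ restricts and descends, and then observe that the restricted and descended triangles witness the two complementarity claims. The only cosmetic difference is that your justification of the idempotency of $(S^\perp)'$ is more roundabout than necessary --- it is simply the restriction of an idempotent monad, so its unit and multiplication restrict with their invertibility intact.
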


\begin{proof}
  For any object $A$ of $\mathcal{T}$ we have the triangle
  $SA \xra{\epsilon_A} A \xra{\eta_A} S^\perp A \xra{\partial_A}
  \Sigma SA$. 
  If $A$ lies in $\mathcal{T}'$ then 
  $SA \in \mathcal{T}'$ and hence $S^\perp A \in \mathcal{T}'$ since
  $\mathcal{T}'$ is closed 
  under isomorphisms in $\mathcal{T}$. 
  This implies that $(S^\perp, \eta)$ restricts to an idempotent
  monad $((S^\perp)', \eta')$ on 
  $\mathcal{T}'$ and that it descends uniquely to an idempotent
  monad $((S^\perp)'', \eta'')$ on
  $\mathcal{T}/\mathcal{T}'$. 
  We obtain triangles
  \begin{equation}
    S'A' \xra{\epsilon'_{A'}} A'
    \xra{\eta'_{A'}} (S^\perp)'A' 
    \xra{\partial'_{A'}} 
    \Sigma S'A'
  \end{equation}
  in $\mathcal{T}'$ for $A' \in
  \mathcal{T}'$ and
  \begin{equation}
    S''A'' \xra{\epsilon''_{A''}} A''
    \xra{\eta''_{A''}} (S^\perp)'' A''
    \xra{\partial''_{A''}} 
    \Sigma S'' A''  
  \end{equation}
  in 
  $\mathcal{T}/\mathcal{T}'$ for
  $A'' \in \mathcal{T}/\mathcal{T}'$. From this, the lemma follows.
\end{proof}

\begin{proposition}
  \label{p:restrict-comonadic-sod}
  Let
  $S_1, \ldots, S_n$ be a sequence of idempotent
  comonads on a triangulated category $\mathcal{T}$.
  Let
  $\mathcal{T}' \subset \mathcal{T}$ be a strictly full triangulated
  subcategory such that all $S_i$ restrict to $\mathcal{T}'$.
  Denote
  the restricted idempotent comonads on $\mathcal{T}'$
  by $S_i'$ and the induced idempotent comonads on
  $\mathcal{T}/\mathcal{T}'$ by $S''_i$.
  If the sequence
  \begin{align}
    \label{eq:sequ-T}
    &\im S_1, \dots, \im S_n\\
    \intertext{of essential images is semi-orthogonal in
    $\mathcal{T}$, then so are the sequences}
    \label{eq:sequ-T'}
    &\im S_1', \dots, \im S_n',
    \\
    \label{eq:sequ-T/T'}
    &\im S''_1, \dots, \im S''_n
  \end{align}
  in $\mathcal{T}'$ and~$\mathcal{T}/\mathcal{T}'$, respectively.
  Moreover, if \eqref{eq:sequ-T} is a semi-orthogonal decomposition of
  $\mathcal{T}$, then
  \eqref{eq:sequ-T'} and~\eqref{eq:sequ-T/T'}
  are semi-orthogonal decompositions of
  $\mathcal{T}'$ and $\mathcal{T}/\mathcal{T}'$, respectively.
\end{proposition}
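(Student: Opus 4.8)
The plan is to reduce both assertions to the combinatorial vanishing criteria of Proposition~\ref{prop-semi-orthogonal} and Proposition~\ref{prop-decomposition}, using that forming the restriction to $\mathcal{T}'$ and the descent along the Verdier localization $Q \colon \mathcal{T} \to \mathcal{T}/\mathcal{T}'$ are strictly functorial and hence commute with composing comonads.

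For the semi-orthogonality claim, I would first apply Proposition~\ref{prop-semi-orthogonal} to rewrite the hypothesis that $\im S_1, \dots, \im S_n$ is semi-orthogonal in $\mathcal{T}$ as the vanishing $S_i S_j = 0$ for all $i > j$. Since $S_i'$ is by definition the restriction of $S_i$ and since $Q$ satisfies $S_i'' Q = Q S_i$, the composites satisfy: $S_i' S_j'$ is the restriction of $S_i S_j$ to $\mathcal{T}'$, and $(S_i'' S_j'') Q = Q (S_i S_j)$. The first therefore vanishes, and the second forces $S_i'' S_j'' = 0$ because $Q$ is essentially surjective (being, by construction, the identity on objects). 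Applying Proposition~\ref{prop-semi-orthogonal} now in $\mathcal{T}'$ and in $\mathcal{T}/\mathcal{T}'$ yields that the sequences \eqref{eq:sequ-T'} and \eqref{eq:sequ-T/T'} are semi-orthogonal.

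For the decomposition claim, I would fix complementary idempotent monads $S_1^\perp, \dots, S_n^\perp$ on $\mathcal{T}$, which exist by Remark~\ref{rem:existence-perp-monad}. By Lemma~\ref{l:idempotent-comonad-orthogonal-restrict}, each $S_i^\perp$ preserves $\mathcal{T}'$, its restriction is a complementary monad of $S_i'$, and its induced functor is a complementary monad of $S_i''$; these are the complementary monads with respect to which I will apply Proposition~\ref{prop-decomposition} in $\mathcal{T}'$ and $\mathcal{T}/\mathcal{T}'$. Assume \eqref{eq:sequ-T} is a semi-orthogonal decomposition of $\mathcal{T}$. Then Proposition~\ref{prop-decomposition} gives $S_1^\perp S_2^\perp \cdots S_n^\perp = 0$ on $\mathcal{T}$, and, exactly as in the previous paragraph, restricting to $\mathcal{T}'$ gives $(S_1')^\perp \cdots (S_n')^\perp = 0$ while descending along $Q$ gives $(S_1'')^\perp \cdots (S_n'')^\perp = 0$. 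Since we have just shown that \eqref{eq:sequ-T'} and \eqref{eq:sequ-T/T'} are semi-orthogonal, Proposition~\ref{prop-decomposition} applies in the reverse direction and shows that both are semi-orthogonal decompositions of $\mathcal{T}'$ and $\mathcal{T}/\mathcal{T}'$, respectively.

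The argument involves no genuine obstacle; it is essentially bookkeeping. The two points deserving a word of care are that the composition of the restricted (respectively induced) comonads equals the restriction (respectively the descent along $Q$) of the composition — immediate from the strict functoriality of restriction along $\mathcal{T}' \hookrightarrow \mathcal{T}$ and of the assignment $S \mapsto S''$ — and that essential surjectivity of $Q$ lets one conclude $S_i'' S_j'' = 0$ from $(S_i'' S_j'') Q = 0$, and likewise for the monad composition.
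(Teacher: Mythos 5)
Your proposal is correct and follows essentially the same route as the paper's proof: translate semi-orthogonality and fullness into the vanishing conditions of Propositions~\ref{prop-semi-orthogonal} and~\ref{prop-decomposition}, observe that these vanishings pass to the restriction and to the Verdier quotient (using $S_i''Q = QS_i$ and the fact that $Q$ is essentially surjective), and invoke Lemma~\ref{l:idempotent-comonad-orthogonal-restrict} to identify the restricted and induced monads as the relevant complementary monads. The paper's own proof is just a terser version of exactly this bookkeeping.
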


\begin{proof}
  The equality $S_iS_j=0$ implies the equalities
  $S_i'S_j'=0$ and $S''_iS''_j=0$, for all $i, j$.
  Lemma~\ref{l:idempotent-comonad-orthogonal-restrict} shows that
  $(S_i^\perp)'=(S_i')^\perp$ and $(S_i^\perp)''=(S''_i)^\perp$.
  Therefore the equality $S_1^\perp S_2^\perp \cdots S_n^\perp =0$
  implies the equalities $(S'_1)^\perp (S'_2)^\perp \cdots (S'_n)^\perp =0$
  and $(S''_1)^\perp (S''_2)^\perp \cdots (S''_n)^\perp =0$.
  Now use Propositions~\ref{prop-semi-orthogonal} and
  \ref{prop-decomposition}. 
\end{proof}

\begin{corollary}
  \label{c:restrict-sod-right-admissible}
  Let $L_i \colon \mathcal{T}_i \ra \mathcal{T}$ be fully
  faithful 
  triangulated functors admitting right adjoints $R_i$, for $i=1,
  \dots, n$. 
  Let $\mathcal{T}' \subset \mathcal{T}$ 
  and $\mathcal{T}_i' \subset \mathcal{T}_i$ be  
  strictly full triangulated subcategories such that all $L_i$
  and all $R_i$ restrict to functors
  $L_i' \colon \mathcal{T}'_i \ra \mathcal{T}'$ and
  $R_i' \colon \mathcal{T}' \ra \mathcal{T}'_i$, respectively.
  Then the $L'_i$ are fully faithful 
  triangulated functors and descend to
  fully faithful 
  triangulated functors
  $L''_i \colon \mathcal{T}_i/\mathcal{T}'_i \ra
  \mathcal{T}/\mathcal{T}'$, and all functors $L'_i$ and $L''_i$
  admit right adjoints.  
  Moreover, if the sequence
  \begin{equation}
    \label{eq:sequ-T-L}
    \im L_1, \dots, \im L_n
  \end{equation}
  is semi-orthogonal in $\mathcal{T}$, then so are the sequences
  \begin{align}
    \label{eq:sequ-T'-L}
    \im L_1', \dots, \im L_n',
    \\
    \label{eq:sequ-T/T'-L}
    \im L''_1, \dots, \im L''_n
  \end{align}
  in $\mathcal{T}'$ and~$\mathcal{T}''$, respectively.
  Furthermore, if \eqref{eq:sequ-T-L} is a semi-orthogonal
  decomposition of 
  $\mathcal{T}$, then
  \eqref{eq:sequ-T'-L} and
  \eqref{eq:sequ-T/T'-L}~are semi-orthogonal decompositions of
  $\mathcal{T}'$ and $\mathcal{T}/\mathcal{T}'$, respectively.
\end{corollary}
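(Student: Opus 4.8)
The plan is to reduce the whole statement to Proposition~\ref{p:restrict-comonadic-sod} by replacing each fully faithful functor $L_i$ with the idempotent comonad it induces. For every $i$, Example~\ref{eg:comonad-from-adjunction} turns the adjunction $(L_i, R_i, \eta_i, \epsilon_i)$ into an idempotent comonad $S_i := L_i R_i$ on $\mathcal{T}$ with $\im S_i = \im L_i$; the unit $\eta_i$ is a $2$-isomorphism precisely because $L_i$ is fully faithful. Since $R_i(\mathcal{T}') \subseteq \mathcal{T}_i'$ and $L_i(\mathcal{T}_i') \subseteq \mathcal{T}'$ by hypothesis, the functor $S_i$ preserves $\mathcal{T}'$, so there are a restricted idempotent comonad $S_i'$ on $\mathcal{T}'$ and an induced idempotent comonad $S_i''$ on $\mathcal{T}/\mathcal{T}'$, as explained in the discussion preceding Lemma~\ref{l:idempotent-comonad-orthogonal-restrict}. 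All functors occurring are triangulated, since adjoints, restrictions to triangulated subcategories, and Verdier descents of triangulated functors are again triangulated.

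Next I would prove the assertions about full faithfulness and right adjoints and, along the way, identify $\im S_i' = \im L_i'$ and $\im S_i'' = \im L_i''$. First, the adjunction $(L_i, R_i)$ restricts to an adjunction $(L_i', R_i')$ between $\mathcal{T}_i'$ and $\mathcal{T}'$: for $A \in \mathcal{T}_i'$ the morphism $\eta_{i,A} \colon A \to R_i L_i A$ already lies in the strictly full subcategory $\mathcal{T}_i'$, and similarly $\epsilon_i$ restricts, so the triangle identities are inherited. The functor $L_i'$ is fully faithful, being a fully faithful functor restricted to a full subcategory; hence $\eta_i'$ is a $2$-isomorphism, and since the restricted comonad $S_i'$ equals $L_i' R_i'$, Example~\ref{eg:comonad-from-adjunction} gives $\im S_i' = \im L_i'$. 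Second, since $L_i$ sends $\mathcal{T}_i'$ into $\mathcal{T}'$, the triangulated functor $Q \circ L_i$ vanishes on $\mathcal{T}_i'$ and therefore factors uniquely as $L_i'' \circ Q_i$ for a triangulated functor $L_i'' \colon \mathcal{T}_i/\mathcal{T}_i' \to \mathcal{T}/\mathcal{T}'$, where $Q, Q_i$ denote the Verdier localizations; dually $R_i$ produces $R_i''$ with $R_i'' Q = Q_i R_i$. Using the $2$-categorical universal property of the Verdier quotients, the unit $\eta_i$ and counit $\epsilon_i$ descend to $2$-morphisms $\eta_i'', \epsilon_i''$ satisfying the triangle identities, so $(L_i'', R_i'')$ is an adjunction; as $\eta_i$ is a $2$-isomorphism and $Q_i$ is essentially surjective, $\eta_i''$ is a $2$-isomorphism, whence $L_i''$ is fully faithful. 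Finally, $L_i'' R_i'' Q = Q L_i R_i = Q S_i = S_i'' Q$ together with essential surjectivity of $Q$ yields $S_i'' = L_i'' R_i''$, so $\im S_i'' = \im L_i''$ by Example~\ref{eg:comonad-from-adjunction} again.

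It then remains to apply Proposition~\ref{p:restrict-comonadic-sod} to the sequence $S_1, \dots, S_n$ of idempotent comonads on $\mathcal{T}$, all of which restrict to $\mathcal{T}'$. Because $\im S_i = \im L_i$, $\im S_i' = \im L_i'$ and $\im S_i'' = \im L_i''$, semi-orthogonality of $\im L_1, \dots, \im L_n$ in $\mathcal{T}$ yields semi-orthogonality of $\im L_1', \dots, \im L_n'$ in $\mathcal{T}'$ and of $\im L_1'', \dots, \im L_n''$ in $\mathcal{T}/\mathcal{T}'$; and if $\langle \im L_1, \dots, \im L_n \rangle$ is a semi-orthogonal decomposition of $\mathcal{T}$, then the other two sequences are semi-orthogonal decompositions of $\mathcal{T}'$ and $\mathcal{T}/\mathcal{T}'$, respectively.

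I expect the main obstacle to be the descent of the adjunction along the Verdier localizations: constructing $\eta_i''$ and $\epsilon_i''$ and checking that $\eta_i''$ is still a $2$-isomorphism uses the fact that precomposition with $Q_i$ is fully faithful on the relevant functor categories (the $2$-categorical, not merely $1$-categorical, universal property of the Verdier quotient), together with essential surjectivity of $Q_i$. Once the comonadic reformulation is set up, the remaining verifications are routine bookkeeping feeding into Proposition~\ref{p:restrict-comonadic-sod}.
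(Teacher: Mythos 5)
Your proposal is correct and follows essentially the same route as the paper: pass to the idempotent comonads $S_i = L_iR_i$, check that they restrict to $\mathcal{T}'$ and descend to $\mathcal{T}/\mathcal{T}'$ with $\im S_i'=\im L_i'$ and $\im S_i''=\im L_i''$, and invoke Proposition~\ref{p:restrict-comonadic-sod}. The only difference is that you spell out the descent of the adjunction along the Verdier quotients (via its 2-categorical universal property), which the paper asserts more briefly.
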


\begin{proof}
  Clearly, $L_i'$ is 
  fully faithful triangulated and has $R'_i$ as a right adjoint. The
  associated 
  idempotent comonad 
  $S_i'=L_i'R_i'$ satisfies 
  $\im L_i'=\im S_i'$ by
  Example~\ref{eg:comonad-from-adjunction}.
  The universal property of the Verdier quotient shows that
  $L_i$ descends to a triangulated functor  
  $L''_i \colon \mathcal{T}_i/\mathcal{T}'_i \ra
  \mathcal{T}/\mathcal{T}'$. 
  More precisely, the adjunction $(L_i, R_i, \eta_i, \epsilon_i)$ 
  descends to an adjunction
  $(L''_i, R''_i, \eta''_i, \epsilon''_i)$, so $L''_i$ has a
  right adjoint.
  Since $L_i$ is full and faithful, $\eta_i
  \colon \id \xRa{\sim} R_iL_i$ is an isomorphism, and hence so is
  $\eta''_i$. This shows that
  $L''_i$ is full and faithful. As above, the associated idempotent
  comonad $S''_i = L''_i R''_i$ satisfies $\im L''_i=\im
  S''_i$.

  Clearly, the idempotent comonad $S_i=L_iR_i$ satisfies $\im
  L_i=\im S_i$ and restricts to $S_i'$ and induces $S''_i$.
  Hence Proposition~\ref{p:restrict-comonadic-sod} proves what we need.
\end{proof}


\section{Applications}
\label{sec:applications}
In this section, we combine the formalism of Fourier--Mukai transforms developed in
Section~\ref{sec:fourier-mukai} with the \emph{abstract version} of the conservative descent
theorem (Theorem~\ref{t:descent-comonadic-sod}) from the previous section.
This gives a \emph{geometric version} of the conservative descent theorem
(Theorem~\ref{t:main-geometric}) which is easy to apply in practice.
We illustrate the usefulness of this theorem by giving new proofs of the existence
of semi-orthogonal decompositions associated to projectivized vector bundles, blow-ups and
root stacks.

We start by reformulating our main theorem in a geometric context.
\begin{theorem}[Conservative descent]
\label{t:main-geometric}
Let $Z_1, \ldots, Z_n$ and~$X$ be algebraic stacks over some base algebraic stack $S$,
and assume that $\Phi_i \colon \D_\qc(Z_i) \to \D_\qc(X)$, for $1 \leq i \leq n$,
are Fourier--Mukai transforms over $S$.
Let $u\colon S' \to S$ be a faithfully flat morphism,
and denote the base change of the objects above by $Z'_1, \ldots, Z'_n$, $X'$ and $\Phi'_i \colon \D_\qc(Z'_i) \to \D_\qc(X')$, respectively.
Then for each $i$, the functor $\Phi_i$ is fully faithful
provided that $\Phi'_i$ is fully faithful.

Assume that all $\Phi'_i$, and therefore also all $\Phi_i$, are fully faithful.
If the sequence 
\begin{equation}
\label{eq-sod-local}
\im \Phi'_1, \ldots, \im \Phi'_n
\end{equation}
of essential images is semi-orthogonal in $\D_\qc(X')$,
then so is the sequence
\begin{equation}
\label{eq-sod}
\im \Phi_1, \ldots, \im \Phi_n
\end{equation}
in $\D_\qc(X)$.
Both sequences consist of right admissible
subcategories of $\D_\qc(X')$ and $\D_\qc(X)$, respectively.
Moreover, if \eqref{eq-sod-local} is a semi-orthogonal decomposition of $\D_\qc(X')$,
then \eqref{eq-sod} is a semi-orthogonal decomposition of $\D_\qc(X)$.
\end{theorem}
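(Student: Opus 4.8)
The plan is to deduce this geometric statement directly from the abstract conservative descent Theorem~\ref{t-main-categorical}, together with the base-change results for Fourier--Mukai transforms established in Section~\ref{sec:idemp-comon-mates}. First I would fix notation so that the common target $X$ of the transforms plays the role of the object $Y$ in Setting~\ref{set:FM-flat-BC}: for each $i$ we are in that setting with the transform $\Phi_i\colon\D_\qc(Z_i)\to\D_\qc(X)$ over $S$ and the flat morphism $u\colon S'\to S$, and the relevant base-changed target is $X'=X\times_S S'$, which does not depend on $i$. Writing $w\colon X'\to X$ for the projection, the morphism $w$ is faithfully flat because $u$ is, and hence $w^*\colon\D_\qc(X)\to\D_\qc(X')$ is conservative by Example~\ref{eg:conservative}.

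With this setup, the first assertion --- full faithfulness of $\Phi_i$ given full faithfulness of $\Phi_i'$ --- is exactly Proposition~\ref{p:FM-functor-ff-local} applied to each index $i$ separately.

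For the remaining assertions, assume that all $\Phi_i$ and all $\Phi_i'$ are fully faithful. By Example~\ref{eg:FM-comonad}, each $\Phi_i$ gives rise to an idempotent comonad $S_i:=S_{\Phi_i}=\Phi_i\Psi_i$ on $\D_\qc(X)$ with $\im S_i=\im\Phi_i$, and likewise each $\Phi_i'$ yields an idempotent comonad $S_i':=S_{\Phi_i'}$ on $\D_\qc(X')$ with $\im S_i'=\im\Phi_i'$. Proposition~\ref{p:FM-comonads-f*-compatible} shows that $S_i$ and $S_i'$ are compatible with respect to $w^*$ for every $i$. We may therefore apply Theorem~\ref{t-main-categorical} with the conservative functor $F=w^*$ and these two sequences of idempotent comonads: semi-orthogonality of $\im S_1',\dots,\im S_n'$, equivalently of $\im\Phi_1',\dots,\im\Phi_n'$, in $\D_\qc(X')$ yields semi-orthogonality of $\im S_1,\dots,\im S_n$, equivalently of $\im\Phi_1,\dots,\im\Phi_n$, in $\D_\qc(X)$, and the decomposition statement descends in the same way.

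I expect no serious obstacle here: all of the genuine work has already been carried out in the abstract part of the paper. The only points that require care are purely organizational --- namely checking that the morphism used as the conservative functor is the one obtained by base-changing the common target $X$ rather than the sources $Z_i$, that this morphism is faithfully flat, and that the essential images of the comonads $S_{\Phi_i}$ really coincide with those of the transforms $\Phi_i$, all of which are handled by Example~\ref{eg:comonad-from-adjunction} and Example~\ref{eg:FM-comonad}.
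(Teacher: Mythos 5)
Your proposal is correct and follows exactly the same route as the paper's own proof: full faithfulness descends via Proposition~\ref{p:FM-functor-ff-local}, the comonads $S_{\Phi_i}$ and $S_{\Phi_i'}$ from Example~\ref{eg:FM-comonad} are compatible with respect to the pullback along the (faithfully flat, hence conservative) base change of the common target $X$ by Proposition~\ref{p:FM-comonads-f*-compatible}, and Theorem~\ref{t-main-categorical} then applies. The organizational points you flag are precisely the ones the paper also attends to, so there is nothing to add.
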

\begin{proof}
We may verify that the Fourier--Mukai transforms $\Phi_i$ are fully faithful after a
faithfully flat base change by Proposition~\ref{p:FM-functor-ff-local}.
Now assume that all our Fourier--Mukai transforms are fully faithful.
By Example~\ref{eg:FM-comonad}, we get sequences of idempotent comonads $S_i$ and $S_i'$
with the same essential images as $\Phi_i$ and~$\Phi'_i$, respectively.
Furthermore, by Proposition~\ref{p:FM-comonads-f*-compatible},
we get $g^*$-compatibilities $S_i \Ra S_i'$, where $g \colon X'
\ra X$ is the morphism induced by the base change.
By Example~\ref{eg:conservative}, the functor $g^*$ is conservative.
Hence we are in a situation where we can apply
Theorem~\ref{t-main-categorical}, 
with $g^*\colon \D_\qc(X) \to \D_\qc(X')$ playing the role of $F\colon \mathcal{T} \to \mathcal{T}'$
in the statement of the theorem.
This proves the rest of the theorem.
\end{proof}

In situations where Theorem~\ref{t:main-geometric} applies, we also get semi-orthogonal decompositions of
the categories of perfect complexes, locally bounded pseudo-coherent complexes and of the
singularity category.
Keep the notation from the statement of Theorem~\ref{t:main-geometric}.
Recall from Proposition~\ref{prop-fm-restriction} that the functors $\Phi_i$ restrict to functors
\begin{equation}
\label{eq-fm-restricted}
\Phi^\pf_i \colon \D_\pf(Z_i) \to \D_\pf(X),
\qquad
\Phi^\pc_i \colon \D^\locbd_\pc(Z_i) \to \D^\locbd_\pc(X),
\end{equation}
between categories of perfect complexes and locally bounded pseudo-coherent complexes, respectively.
We also get induced functors
\begin{equation}
\label{eq-fm-singularity}
\Phi^\sg_i \colon \D_\sg(Z_i) \to \D_\sg(X),
\end{equation}
between the singularity categories.
Note that the functors \eqref{eq-fm-restricted} and~\eqref{eq-fm-singularity},
again by Proposition~\ref{prop-fm-restriction},
have right adjoints.
Furthermore, as already discussed in Section~\ref{sec:fourier-mukai},
the functors $\Phi^\pf_i$ also admit left adjoints given by \eqref{eq-fm-pf-left}.
In particular, we get the following theorem as a direct
application of Corollary~\ref{c:restrict-sod-right-admissible}.

\begin{theorem}
\label{t:restrict-pf-coh-sg}
Let $Z_1, \ldots, Z_n$ and~$X$ be algebraic stacks over some base algebraic stack $S$,
and assume that $\Phi_i \colon \D_\qc(Z_i) \to \D_\qc(X)$, for $1 \leq i \leq n$,
are Fourier--Mukai transforms over $S$.
Consider the induced functors  \eqref{eq-fm-restricted} and~\eqref{eq-fm-singularity}.

Assume that each $\Phi_i$ is fully faithful and that
\begin{align}
\label{eq-sod-2}
  \D_\qc(X) & = \langle \im \Phi_1, \ldots, \im \Phi_n\rangle\\
  \intertext{is a semi-orthogonal decomposition. Then}
  \D_\pf(X) & = \langle \im \Phi^\pf_1, \ldots, \im \Phi^\pf_n\rangle\\
  \intertext{is a semi-orthogonal decomposition into admissible
  subcategories, and}
  \D^\locbd_\pc(X) & = \langle \im \Phi^\pc_1, \ldots, \im
  \Phi^\pc_n\rangle,\\ 
  \D_\sg(X) & = \langle \im \Phi^\sg_1, \ldots, \im \Phi^\sg_n\rangle
\end{align}
are semi-orthogonal decompositions into right admissible subcategories.
\end{theorem}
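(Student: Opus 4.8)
The plan is to derive all three decompositions formally from Corollary~\ref{c:restrict-sod-right-admissible}, applied twice in succession, with the hypotheses verified each time via Proposition~\ref{prop-fm-restriction}. Write $\Psi_i \colon \D_\qc(X) \to \D_\qc(Z_i)$ for the right adjoint \eqref{eq-fm-functor-right} of $\Phi_i$. By Proposition~\ref{prop-fm-restriction} both $\Phi_i$ and $\Psi_i$ preserve perfect complexes and locally bounded pseudo-coherent complexes; since $\D_\pf(-) \subset \D^\locbd_\pc(-) \subset \D_\qc(-)$ are strictly full triangulated subcategories by \eqref{eq-subcats}, this says precisely that $\Phi_i$ and $\Psi_i$ restrict along each of these inclusions, for both $Z_i$ and~$X$.

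First I would apply Corollary~\ref{c:restrict-sod-right-admissible} with $\mathcal{T} = \D_\qc(X)$, $\mathcal{T}_i = \D_\qc(Z_i)$, $L_i = \Phi_i$ (which is fully faithful by hypothesis), $R_i = \Psi_i$, and the subcategories $\mathcal{T}' = \D^\locbd_\pc(X)$, $\mathcal{T}'_i = \D^\locbd_\pc(Z_i)$. Since $\D_\qc(X) = \langle \im \Phi_1, \ldots, \im \Phi_n \rangle$ by assumption, the corollary gives that the restrictions $\Phi^\pc_i$ are fully faithful with right adjoints (namely the restrictions of $\Psi_i$) and that $\D^\locbd_\pc(X) = \langle \im \Phi^\pc_1, \ldots, \im \Phi^\pc_n \rangle$ is a semi-orthogonal decomposition into right admissible subcategories; the decomposition of the quotient $\D_\qc(X)/\D^\locbd_\pc(X)$ that the corollary also produces is not needed.

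Next I would feed this output back into Corollary~\ref{c:restrict-sod-right-admissible}, now with $\mathcal{T} = \D^\locbd_\pc(X)$, $\mathcal{T}_i = \D^\locbd_\pc(Z_i)$, $L_i = \Phi^\pc_i$ and $R_i$ its right adjoint, and the subcategories $\mathcal{T}' = \D_\pf(X)$, $\mathcal{T}'_i = \D_\pf(Z_i)$; that $L_i$ and $R_i$ restrict along these inclusions is again Proposition~\ref{prop-fm-restriction}, as $\Psi_i$ preserves $\D_\pf$. Since we have just shown $\D^\locbd_\pc(X) = \langle \im \Phi^\pc_1, \ldots, \im \Phi^\pc_n\rangle$, the corollary yields semi-orthogonal decompositions $\D_\pf(X) = \langle \im \Phi^\pf_1, \ldots, \im \Phi^\pf_n \rangle$ into right admissible subcategories of $\D_\pf(X)$ and $\D_\sg(X) = \langle \im \Phi^\sg_1, \ldots, \im \Phi^\sg_n \rangle$ into right admissible subcategories of $\D_\sg(X) = \D^\locbd_\pc(X)/\D_\pf(X)$. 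It then remains only to upgrade the first of these to a decomposition by admissible subcategories: by \eqref{eq-fm-pf-left} the functor $\Phi^\pf_i \colon \D_\pf(Z_i) \to \D_\pf(X)$ also admits a left adjoint, and since $\Phi^\pf_i$ is fully faithful it is, up to equivalence, the inclusion of $\im \Phi^\pf_i$ into $\D_\pf(X)$; hence that left adjoint exhibits $\im \Phi^\pf_i$ as left admissible, so each $\im \Phi^\pf_i$ is admissible.

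As for difficulty, there is essentially no homological content to overcome here: the statement is a formal consequence of material already in place. The only points demanding attention are arranging the two nested applications of Corollary~\ref{c:restrict-sod-right-admissible} so that the conclusion of the first (fully faithfulness of the $\Phi^\pc_i$ and the semi-orthogonal decomposition of $\D^\locbd_\pc(X)$) is exactly the input required by the second, and not forgetting the extra left adjoint \eqref{eq-fm-pf-left}, special to perfect complexes, which is what promotes right admissibility to admissibility in the perfect case.
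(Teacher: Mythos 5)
Your proposal is correct and is essentially the paper's own argument: the paper presents this theorem as a direct application of Corollary~\ref{c:restrict-sod-right-admissible}, using Proposition~\ref{prop-fm-restriction} to see that the $\Phi_i$ and their right adjoints restrict, and the left adjoints \eqref{eq-fm-pf-left} to upgrade right admissibility to admissibility in the perfect case. Your explicit organization into two nested applications of the corollary (first $\D^\locbd_\pc(X)\subset\D_\qc(X)$, then $\D_\pf(X)\subset\D^\locbd_\pc(X)$ to reach the singularity category) is exactly the intended reading.
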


\subsection{Some auxiliary results}
Before turning to the actual applications,
we state some auxiliary results. They will help us to determine
whether a given semi-orthogonal sequence is full
(Lemma~\ref{l-sod-generator} and Lemma~\ref{lemma-compact-ample})
and to show that pull-back functors are fully faithful
(Lemma~\ref{lemma-fully-faithful}).

We start by recalling the definition of a generator for a triangulated category.
\begin{definition}
Let $\mathcal{T}$ be a triangulated category.
An object $G$ in $\mathcal{T}$ is a \define{generator} for $\mathcal{T}$
if for any object $F$ we have $F = 0$ if and only if
$\mathcal{T}(\Sigma^m G, F) = 0$ for all $m \in \mathbb{Z}$.
\end{definition}

\begin{lemma}
\label{l-sod-generator}
Let $\mathcal{T}$ be a triangulated category with a generator $G$.
Let
$\mathcal{T}_1, \ldots, \mathcal{T}_n$
be a semi-orthogonal sequence of right admissible subcategories
of $\mathcal{T}$, and let $\mathcal{T}'$ denote its triangulated hull in $\mathcal{T}$.
Then $\mathcal{T} = \mathcal{T}'$ in and only if $G \in \mathcal{T}'$.
\end{lemma}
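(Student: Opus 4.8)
The plan is to reduce the statement to Proposition~\ref{prop-decomposition} and then feed in the generator. The ``only if'' direction is immediate: if $\mathcal{T}=\mathcal{T}'$ then $G\in\mathcal{T}=\mathcal{T}'$. So assume from now on that $G\in\mathcal{T}'$, and let us set up comonadic data. Since each $\mathcal{T}_i$ is right admissible, Example~\ref{eg:id-comonad-right-admissible-subcat} provides an idempotent comonad $S_i$ on $\mathcal{T}$ with $\im S_i=\mathcal{T}_i$, and Remark~\ref{rem:existence-perp-monad} provides a complementary idempotent monad $S_i^\perp$. Semi-orthogonality of $\mathcal{T}_1,\dots,\mathcal{T}_n$ translates via Proposition~\ref{prop-semi-orthogonal} into $S_iS_j=0$ for $i>j$. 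By Proposition~\ref{prop-decomposition} it then suffices to prove that the composition $S_1^\perp S_2^\perp\cdots S_n^\perp$ vanishes on $\mathcal{T}$.

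The heart of the argument is to show that for every object $A$ the object $D:=S_1^\perp S_2^\perp\cdots S_n^\perp A$ is right-orthogonal to every $\mathcal{T}_i$. I would first record two elementary facts obtained by applying $S_i$ to the complementarity triangle $S_jA\to A\to S_j^\perp A\to\Sigma S_jA$ of Definition~\ref{d:complementary} (functorial by Lemma~\ref{l:images-perp-(co)monads}): for $j<i$ one has $S_iS_j=0$, so $S_i\xRightarrow{\sim}S_iS_j^\perp$ is a natural isomorphism; for $j=i$ the map $S_i\epsilon_i$ is invertible by Lemma~\ref{l:idempotent-comonad}, whence $S_iS_i^\perp=0$. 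Peeling off the factors $S_1^\perp,\dots,S_{i-1}^\perp$ one at a time using the first isomorphism yields $S_iD\cong S_iS_i^\perp(S_{i+1}^\perp\cdots S_n^\perp A)=0$, i.e.\ $D\in\ker S_i=(\im S_i)^\perp=\mathcal{T}_i^\perp$ by Lemma~\ref{l:images-perp-(co)monads}.

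To conclude, observe that $\{X\in\mathcal{T}:\mathcal{T}(X,D)=0\}$ is a strictly full triangulated subcategory of $\mathcal{T}$ containing every $\mathcal{T}_i$ (by the previous step), hence containing the triangulated hull $\mathcal{T}'$, and therefore containing $\Sigma^mG$ for all $m\in\mathbb{Z}$ since $G\in\mathcal{T}'$. Thus $\mathcal{T}(\Sigma^mG,D)=0$ for all $m$, and as $G$ is a generator we get $D=0$. Since $A$ was arbitrary, $S_1^\perp S_2^\perp\cdots S_n^\perp$ vanishes on $\mathcal{T}$, and Proposition~\ref{prop-decomposition} gives $\mathcal{T}=\mathcal{T}'$, as desired.

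The only delicate point is the bookkeeping in the middle paragraph: one must keep track of the order of the $S_i^\perp$'s in the composition and be careful which vanishing is invoked at each stage ($S_iS_j=0$ for $i>j$ versus idempotence $S_iS_i\cong S_i$). Everything else is formal. One may phrase the middle and last paragraphs together as the single observation $\im(S_1^\perp\cdots S_n^\perp)\subseteq(\mathcal{T}')^\perp$ together with $(\mathcal{T}')^\perp=0$, the latter holding because $\mathcal{T}'$ contains a generator; this is the same proof organised slightly differently.
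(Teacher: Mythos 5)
Your proof is correct, and it takes a noticeably different route from the one in the paper. The paper's argument is very short: it quotes an external result (Lemma~A.9 and~A.11 of \cite{valery-olaf-matfak-semi-orth-decomp}) saying that the triangulated hull $\mathcal{T}'$ of a semi-orthogonal sequence of right admissible subcategories is itself right admissible, so that $\mathcal{T}=\langle (\mathcal{T}')^\perp,\mathcal{T}'\rangle$ is a semi-orthogonal decomposition; since $G\in\mathcal{T}'$ and $G$ is a generator, $(\mathcal{T}')^\perp=0$ and hence $\mathcal{T}=\mathcal{T}'$. You instead stay entirely inside the toolkit developed in Section~\ref{sec:semi-decomp-desc}: you translate admissibility and semi-orthogonality into comonadic data via Example~\ref{eg:id-comonad-right-admissible-subcat} and Proposition~\ref{prop-semi-orthogonal}, reduce fullness to the vanishing of $S_1^\perp\cdots S_n^\perp$ via Proposition~\ref{prop-decomposition}, and verify that vanishing by the peeling computation $S_i\cong S_iS_j^\perp$ for $i>j$ and $S_iS_i^\perp=0$, which places $S_1^\perp\cdots S_n^\perp A$ in $(\im S_i)^\perp$ for every $i$, hence in the left-orthogonal-killing subcategory containing $\mathcal{T}'$ and thus killed by the generator. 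The bookkeeping in the peeling step is handled correctly (the factors to the left of $S_i^\perp$ have strictly smaller index, which is exactly where $S_iS_j=0$ applies), and your closing remark rightly identifies the common core of both arguments, namely $(\mathcal{T}')^\perp=0$. What each approach buys: the paper's proof is two lines but leans on an external admissibility statement for the hull; yours is longer but self-contained, using only results proved in the paper, and in effect reproves the relevant fragment of that external lemma along the way.
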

\begin{proof}
  Trivially, $\mathcal{T}=\mathcal{T}'$ implies $G \in \mathcal{T}'$.  
  Note that $\mathcal{T}'$ is right admissible in $\mathcal{T}$ and
  that $\mathcal{T}=\langle (\mathcal{T}')^\perp, \mathcal{T}'\rangle$
  is a semi-orthogonal decomposition
  (see e.g.\ \cite[Lemma~A.9 and~A.11]{valery-olaf-matfak-semi-orth-decomp}). 
  Assume that $G$ lies in $\mathcal{T}'$.
  Then the same holds for all shifts of $G$, 
  so $(\mathcal{T}')^\perp = 0$ since $G$ is a generator. Hence
  $\mathcal{T}=\mathcal{T}'$ as desired. 
\end{proof}

For quasi-projective schemes, one can explicitly construct generators from ample line bundles.

\begin{lemma}
  \label{lemma-compact-ample}
  Let $\mathcal{L}$ be an ample line bundle on a scheme~$X$
  (see \cite[\sptag{01PS}]{stacks-project}).
  Then there exists an integer $n_0$ such that the vector bundle
  $$
  \mathcal{L}^a \oplus \mathcal{L}^{a + 1} \oplus \cdots
  \oplus \mathcal{L}^{a + n_0} 
  $$
  is a generator for $\D_\qc(X)$ for each integer $a$.
  Moreover, if there are sections $s_0, \ldots, s_n \in \HH^0(X, \mathcal{L})$
  such that the open subschemes $X_{s_i}$ are affine
  and cover $X$, then $n_0$ can be taken to be $n$.
\end{lemma}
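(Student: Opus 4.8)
The strategy is to show that the perfect complex $\mathcal{G} := \mathcal{L}^a \oplus \cdots \oplus \mathcal{L}^{a+n}$ generates $\D_\qc(X)$, where $s_0, \dots, s_n \in \HH^0(X, \mathcal{L})$ are sections with $X_{s_i}$ affine and covering $X$; taking $n_0 = n$ then gives the first claim since the existence of such sections for some $n$ is exactly what it means for $\mathcal{L}$ to be ample on the (necessarily quasi-compact and quasi-separated, indeed quasi-projective-type) scheme $X$. Concretely, I would fix an object $F \in \D_\qc(X)$ with $\D_\qc(X)(\Sigma^m \mathcal{G}, F) = 0$ for all $m \in \mathbb{Z}$, which is equivalent to $\HH^m(R\Gamma(X, \mathcal{L}^{-a-i} \otimes F)) = 0$ for all $m$ and all $0 \le i \le n$, i.e.\ $R\Gamma(X, \mathcal{L}^{-a-i} \otimes F) = 0$ for $0 \le i \le n$, and deduce $F = 0$.

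**Key steps.** First, reduce to a Čech/Mayer--Vietoris argument: the open subsets $U_i := X_{s_i}$ are affine and cover $X$, so $F = 0$ if and only if $F|_{U_i} = 0$ for each $i$, and since $U_i$ is affine this holds if and only if $R\Gamma(U_i, F|_{U_i}) = 0$. Second, relate $R\Gamma(U_i, F|_{U_i})$ to the global cohomologies appearing in the hypothesis: multiplication by $s_i^k$ gives maps $\mathcal{L}^{-k} \to \mathcal{O}_X$ which are isomorphisms over $U_i$, so $R\Gamma(U_i, F|_{U_i})$ is computed as the homotopy colimit (filtered colimit of cohomology) of the system
\begin{equation*}
R\Gamma(X, F) \xra{s_i} R\Gamma(X, \mathcal{L} \otimes F) \xra{s_i} R\Gamma(X, \mathcal{L}^2 \otimes F) \xra{s_i} \cdots
\end{equation*}
after shifting indices appropriately — this is the standard description of sections over a basic open of a line bundle. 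Third, and this is the crux, one must promote the vanishing of the $n+1$ particular twists $R\Gamma(X, \mathcal{L}^{-a-i} \otimes F)$, $0 \le i \le n$, to vanishing of \emph{all} sufficiently positive (or negative) twists so that the colimit above is seen to vanish. The mechanism is the Koszul-type resolution coming from the $s_i$: the sections $s_0, \dots, s_n$ have no common zero, so there is an exact Koszul complex
\begin{equation*}
0 \to \mathcal{L}^{-n-1} \to \cdots \to \bigoplus_{i<j} \mathcal{L}^{-2} \to \bigoplus_{i=0}^{n} \mathcal{L}^{-1} \to \mathcal{O}_X \to 0
\end{equation*}
(the Koszul complex on $(s_0, \dots, s_n) \colon \mathcal{O}_X^{n+1} \to \mathcal{L}$, which is exact since the $s_i$ generate $\mathcal{L}$). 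Tensoring with $\mathcal{L}^{m} \otimes F$ and chasing the resulting spectral sequence (or hyperext long exact sequences) lets one express $R\Gamma(X, \mathcal{L}^{m} \otimes F)$ for $m$ outside the initial range in terms of the twists in the range $\{-a-n, \dots, -a\}$; an induction on $m$ then propagates the vanishing in both directions. Finally, feed this back into the colimit description to conclude $R\Gamma(U_i, F|_{U_i}) = 0$ for every $i$, hence $F = 0$, hence $\mathcal{G}$ is a generator.

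**Main obstacle.** The delicate point is the bookkeeping in the third step: one needs the Koszul complex to have length exactly $n+1$ so that a window of $n+1$ consecutive twists suffices, and one must be careful that the induction runs in the correct direction to reach the twists $\mathcal{L}^{m}\otimes F$ with $m \to +\infty$ that actually appear in the colimit computing $R\Gamma(U_i, -)$. Equivalently, one can phrase the whole argument via the Čech complex of the covering $\{U_i\}$ with respect to the sheaf $F$: its terms are $R\Gamma(U_{i_0} \cap \cdots \cap U_{i_p}, F)$, each intersection is again a basic affine open on which $F$ has cohomology controlled by the global twisted cohomologies, and one needs a uniform bound ensuring the relevant twists lie in the chosen window — this is exactly where the precise count $n_0 = n$ comes from. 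I would expect the routine verification that the filtered-colimit description of $R\Gamma(U_i, F|_{U_i})$ is correct, and the spectral sequence degeneration bookkeeping, to be the most calculation-heavy parts, but they present no conceptual difficulty.
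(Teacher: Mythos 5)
Your argument is correct and is essentially the same as the one the paper invokes: the paper gives no details but refers to the proof of \cite[\sptag{0A9V}]{stacks-project}, which runs exactly along your lines (orthogonality gives vanishing of the $n+1$ consecutive twists $R\Gamma(X,\mathcal{L}^{-a-i}\otimes F)$, the exact Koszul complex on the nowhere-vanishing section $(s_0,\dots,s_n)$ of $\mathcal{L}^{\oplus(n+1)}$ propagates this to all twists, and the filtered-colimit description of $R\Gamma(X_{s_i},-)$ together with affineness of the $X_{s_i}$ yields $F=0$). No discrepancies to report.
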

\begin{proof}
This is a well-known fact.
The proof is almost identical to the proof of \cite[\sptag{0A9V}]{stacks-project}. 
\end{proof}

We remind the reader of the following useful criterion for fully faithfulness.
\begin{lemma}
  \label{lemma-fully-faithful}
  If $f \colon X \ra Y$ is a concentrated morphism of algebraic
  stacks, 
  then $f^* \colon \D_\qc(Y) \ra \D_\qc(X)$ is fully faithful if
  and only if the evaluation $\eta_{\mathcal{O}_Y} \colon
  \mathcal{O}_Y \ra 
  f_*f^*\mathcal{O}_Y$ of the adjunction unit $\eta$ at the structure
  sheaf is an 
  isomorphism.
\end{lemma}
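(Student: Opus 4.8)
The plan is to use the projection formula together with the standard characterization of fully faithfulness of a left adjoint in terms of its unit. Recall that for a concentrated morphism $f$ the functor $f_*$ exists as a genuine right adjoint of $f^*$, with unit $\eta \colon \id \Ra f_*f^*$, and that $f^*$ is fully faithful if and only if $\eta$ is a 2-isomorphism. So the content of the lemma is that testing invertibility of $\eta$ on the single object $\mathcal{O}_Y$ suffices.

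First I would observe that $f^* \mathcal{O}_Y \cong \mathcal{O}_X$, so that $f_*f^*\mathcal{O}_Y \cong f_*\mathcal{O}_X$, and the unit $\eta_{\mathcal{O}_Y}$ identifies with the canonical map $\mathcal{O}_Y \to f_*\mathcal{O}_X$. Next, for an arbitrary object $\mathcal{G} \in \D_\qc(Y)$, I would consider the projection formula isomorphism
\begin{equation*}
f_*f^*\mathcal{G} \cong f_*(f^*\mathcal{G} \otimes \mathcal{O}_X) \cong f_*(\mathcal{O}_X) \otimes \mathcal{G} \cong f_*f^*(\mathcal{O}_Y) \otimes \mathcal{G},
\end{equation*}
which holds since $f$ is concentrated (the projection formula for concentrated morphisms is invoked already in Section~\ref{sec:fourier-mukai}, citing \cite[Corollary~4.12]{hr2015}). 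The key point — and the one requiring a little care — is that under this chain of isomorphisms the unit $\eta_{\mathcal{G}} \colon \mathcal{G} \to f_*f^*\mathcal{G}$ corresponds to the map $\eta_{\mathcal{O}_Y} \otimes \id_{\mathcal{G}} \colon \mathcal{G} \cong \mathcal{O}_Y \otimes \mathcal{G} \to f_*f^*(\mathcal{O}_Y) \otimes \mathcal{G}$. This compatibility is a formal consequence of the naturality of the projection formula isomorphism and of the fact that the unit of the adjunction $(f^*, f_*)$ is a morphism of module functors over the monoidal structure; it can be checked by unwinding the construction of the projection formula map, or by the standard observation that both $\eta_\mathcal{G}$ and $\eta_{\mathcal{O}_Y}\otimes\id_\mathcal{G}$ arise by applying $f_*$ to the counit and using adjunction, so they agree after tensoring.

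Granting this identification, the lemma follows immediately: if $\eta_{\mathcal{O}_Y}$ is an isomorphism, then so is $\eta_{\mathcal{O}_Y} \otimes \id_{\mathcal{G}}$ for every $\mathcal{G}$, hence $\eta_{\mathcal{G}}$ is an isomorphism for every $\mathcal{G}$, which means $f^*$ is fully faithful; the converse is trivial since fully faithfulness of $f^*$ means $\eta$ is a 2-isomorphism, in particular at $\mathcal{O}_Y$. The main obstacle, as indicated, is verifying the compatibility of the unit with the projection formula — everything else is formal — but this is a routine (if slightly tedious) diagram chase using the explicit description of the projection formula transformation as the mate of an obvious isomorphism, entirely analogous to the mate manipulations carried out in Section~\ref{sec:idemp-comon-mates}.
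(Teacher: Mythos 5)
Your proposal is correct and follows essentially the same route as the paper: the paper's proof also reduces to showing that, via the projection formula, the unit $\eta_{\mathcal{F}}$ is identified with $\id_{\mathcal{F}} \otimes \eta_{\mathcal{O}_Y}$, and likewise dismisses the required compatibility as boiling down to the definition of the projection formula morphism. The only (cosmetic) difference is that the paper records this identification as an explicit commutative square rather than a chain of isomorphisms.
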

\begin{proof}
  It is enough to show that the adjunction unit $\eta\colon \id
  \to f_*f^*$ is an isomorphism if $\eta_{\mathcal{O}_Y}$ is an 
  isomorphism.
  This is a direct consequence of the projection formula \cite[Corollary 4.12]{hr2017}.
  Indeed,
  for any $\mathcal{F} \in \D_\qc(Y)$,  
  the adjunction morphism $\eta_\mathcal{F} \colon \mathcal{F} \ra f_*f^*\mathcal{F}$ is in
  the obvious way identified with the upper horizontal arrow in
  the commutative diagram
  \begin{equation}
    \xymatrix{
      {\mathcal{F} \otimes \mathcal{O}_Y} 
      \ar[d]_-{\id \otimes \eta_{\mathcal{O}_Y}} 
      \ar[r]^-{\eta_{\mathcal{F} \otimes \mathcal{O}_Y}} & 
      {f_*f^*(\mathcal{F} \otimes \mathcal{O}_Y)}  
      \ar[d]^-{\sim}
      \\
      {\mathcal{F} \otimes f_*f^*\mathcal{O}_Y}
      \ar[r]^-{\sim} 
      &
      {f_*(f^*\mathcal{F} \otimes f^*\mathcal{O}_Y)}
    }
  \end{equation}
  where the lower horizontal isomorphism is the
  projection formula and the right vertical arrow is the obvious
  isomorphism. Checking that this diagram is 
  commutative essentially boils down to the 
  definition of the morphism in the projection formula.
\end{proof}

\subsection{Projectivized vector bundles}
\label{sec:projectivized-bundles}
As our first application of conservative descent, we  generalize the semi-orthogonal decomposition
associated to a projectivized vector bundle, first established by
Be\u{\i}linson \cite{beilinson1978}
in the case of a projective space over a field and by Orlov
\cite[Lemma~2.5, Theorem~2.6]{orlov1992} in the relative setting.

\begin{theorem}
\label{t:sod-projective-bundle}
Let $S$ be an algebraic stack and let $\pi\colon \mathbb{P}(\mathcal{E}) \to S$
be the projectivization of a finite locally free $\mathcal{O}_S$-module
$\mathcal{E}$ of rank $n+1$.
Then the functors
$$
\Phi_i \colon
\D_\qc(S) \ra \D_\qc(\mathbb{P}(\mathcal{E})), \qquad \mathcal{F} \mapsto
\mathcal{O}_{\mathbb{P}(\mathcal{E})}(i) \otimes \pi^*(\mathcal{F}),
$$
are fully faithful Fourier--Mukai transforms.
For any  integer $a$,
we have a semi-orthogonal decomposition
\begin{equation}
\label{eq-pb-sod}
\D_\qc(\mathbb{P}(\mathcal{E})) = \langle \im \Phi_a, \ldots, \im \Phi_{a + n}\rangle
\end{equation}
into right admissible subcategories. 
\end{theorem}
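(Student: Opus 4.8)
The plan is to realize each $\Phi_i$ as a relative Fourier--Mukai transform over $S$, to reduce full faithfulness, semi-orthogonality and fullness to the case of ordinary projective space over an affine scheme via the geometric conservative descent Theorem~\ref{t:main-geometric}, and to settle that local case using Serre's classical cohomology computations on $\mathbb{P}^n$ together with the generator produced from an ample line bundle.

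First I would take as Fourier--Mukai datum $(K,p,q,\mathcal{K}) = (\mathbb{P}(\mathcal{E}),\,\pi,\,\id_X,\,\mathcal{O}_X(i))$ over $S$, so that the induced functor \eqref{eq-fm-functor} is exactly $\mathcal{F}\mapsto\mathcal{O}_X(i)\otimes\pi^*\mathcal{F}$. The requirements of Definition~\ref{def-fourier-mukai} are immediate for $q=\id_X$ (in particular $q^\times\mathcal{O}_X=\mathcal{O}_X$ is perfect), while for $p=\pi$, being a projectivized vector bundle, properness, perfectness, concentratedness and properties \ref{enum:pty-quasi-perfect} and~\ref{enum:pty-quasi-proper} uniformly are furnished by Proposition~\ref{prop-preserving}\,\ref{enum:proj}; the kernel $\mathcal{O}_X(i)$, being a line bundle, is perfect. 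Hence each $\Phi_i$ is a Fourier--Mukai transform over $S$, and so is each base change $\Phi'_i$ along a flat morphism into $S$.

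Next I would pick a faithfully flat $u\colon S'\to S$ with $S'$ a disjoint union of affine schemes over which $\mathcal{E}$ is free, e.g.\ a suitable refinement of a smooth atlas of $S$; as $\D_\qc$ of a disjoint union is a product of the categories $\D_\qc$, while full faithfulness, semi-orthogonality and the vanishing criteria of Propositions~\ref{prop-semi-orthogonal} and~\ref{prop-decomposition} are all verified componentwise, it suffices to treat $X'=\mathbb{P}^n_R$ with $\Phi'_i\colon\mathcal{F}\mapsto\mathcal{O}(i)\otimes\pi'^*\mathcal{F}$ over $\Spec R$. Here tensoring with $\mathcal{O}(i)$ is an autoequivalence, so $\Phi'_i$ is fully faithful iff $\pi'^*$ is; by Lemma~\ref{lemma-fully-faithful} the latter amounts to the adjunction unit $\mathcal{O}_{\Spec R}\to\pi'_*\mathcal{O}_{\mathbb{P}^n_R}$ being an isomorphism, which is Serre's computation. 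For semi-orthogonality, given $a\le j<i\le a+n$, objects $\mathcal{F},\mathcal{G}$ and $m\in\mathbb{Z}$, adjunction for $(\pi'^*,\pi'_*)$ and the projection formula identify $\D_\qc(X')(\Sigma^m\Phi'_i\mathcal{F},\Phi'_j\mathcal{G})$ with $\D_\qc(\Spec R)(\Sigma^m\mathcal{F},\,(\pi'_*\mathcal{O}(j-i))\otimes\mathcal{G})$, which vanishes since $\pi'_*\mathcal{O}(d)=0$ for $-n\le d\le -1$. Finally, each $\im\Phi'_i$ is right admissible, being the essential image of a fully faithful functor with a right adjoint (Example~\ref{eg:FM-comonad}, Remark~\ref{rem:image-ip-comonad-right-admissible}), so Lemma~\ref{l-sod-generator} reduces fullness to placing a generator inside the triangulated hull of the $\im\Phi'_i$: by Lemma~\ref{lemma-compact-ample} the bundle $\bigoplus_{k=a}^{a+n}\mathcal{O}(k)$ is a generator of $\D_\qc(\mathbb{P}^n_R)$ (the $n+1$ standard sections of $\mathcal{O}(1)$ give an affine open cover), and $\mathcal{O}(k)=\Phi'_k(\mathcal{O}_{\Spec R})\in\im\Phi'_k$ for $a\le k\le a+n$. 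Thus \eqref{eq-pb-sod} holds after base change along $u$.

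With full faithfulness, semi-orthogonality and fullness established upstairs, Theorem~\ref{t:main-geometric} delivers all three downstairs: the $\Phi_i$ are fully faithful and \eqref{eq-pb-sod} is a semi-orthogonal decomposition of $\D_\qc(X)$, with each $\im\Phi_i$ right admissible as before. I expect the only mildly delicate point to be the bookkeeping of the base change — making sure $X'$ genuinely becomes a disjoint union of projective spaces and that the $\Phi'_i$ are the expected twists, so that the hypotheses of Theorem~\ref{t:main-geometric} are met verbatim; the only real computational inputs are Serre's cohomology of $\mathcal{O}(d)$ on $\mathbb{P}^n$ and the ample-bundle generator.
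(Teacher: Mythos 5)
Your proposal is correct and follows essentially the same route as the paper: the same Fourier--Mukai datum $(\mathbb{P}(\mathcal{E}),\pi,\id,\mathcal{O}(i))$, reduction via Theorem~\ref{t:main-geometric} to $\mathbb{P}^n_R$ over an affine base, full faithfulness via Lemma~\ref{lemma-fully-faithful} and Serre's computation, semi-orthogonality via adjunction, the projection formula and the vanishing of $\pi_*\mathcal{O}(d)$ for $-n\le d\le -1$, and fullness via Lemmas~\ref{lemma-compact-ample} and~\ref{l-sod-generator}. Your explicit remark that $S'$ should be taken as a disjoint union of affines trivializing $\mathcal{E}$, with everything checked componentwise, is a small but welcome clarification of a step the paper leaves implicit.
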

\begin{proof}
By Proposition~\ref{prop-preserving}, each $\Phi_i$ is a Fourier--Mukai transform over $S$
with $K = \mathbb{P}(\mathcal{E})$,
$\mathcal{K} = \mathcal{O}_{\mathbb{P}(\mathcal{E})}(i)$,
$p = \pi$,
$q = \id$
in the notation from Definition~\ref{def-fourier-mukai}.
Moreover, base change along any morphism is compatible with
taking the projectivization of a vector bundle (cf.\
\cite[\sptag{01O3}]{stacks-project}).
Hence the theorem can be verified after a faithfully flat base
change, by the geometric conservative descent Theorem~\ref{t:main-geometric}.
This reduces the problem to the situation where $S = \Spec R$ is an affine scheme
and $\mathcal{E} = \mathcal{O}_S^{n+1}$ is a trivial vector bundle.

First we verify that $\Phi_i$ is fully faithful.
Since twisting with a line bundle induces an equivalence of categories,
it is enough to show that $\pi^*$ is fully faithful.
By Lemma~\ref{lemma-fully-faithful}, it is enough to show that the canonical
morphism $\mathcal{O}_S \to \pi_*\pi^*\mathcal{O}_S$ is an isomorphism.
Since $S$ is affine, 
this follows from the well known fact that $\HH^q(\mathbb{P}^n_R, \mathcal{O}_{\mathbb{P}^n_R})$
is equal to $R$ for $q=0$ and vanishes otherwise
(see e.g.~\cite[\sptag{01XT}]{stacks-project}).

Next we show that the sequence of categories in \eqref{eq-pb-sod}
is semi-orthogonal. 
Let $\mathcal{F}$ and $\mathcal{G}$ be objects of $\D_\qc(S)$.
  \begin{align}
    \label{eq-bundle-so}
    \Hom(\Phi_i(\mathcal{F}), \Phi_j(\mathcal{G}))
    & \cong
      \Hom(\mathcal{F},
      \pi_\ast(\mathcal{O}_{\mathbb{P}(\mathcal{E})}(j - i)
      \otimes \pi^\ast\mathcal{G})) \\ 
    & \cong
      \Hom(\mathcal{F},\pi_\ast
      (\mathcal{O}_{\mathbb{P}(\mathcal{E})}(j - i) ) \otimes
      \mathcal{G}), \notag
  \end{align}
  where the first bijection follows from the adjunction and twisting with
  $\mathcal{O}_{\mathbb{P}(\mathcal{E})}(-i)$,
  and the second bijection follows from the projection formula.
  But the sheaf cohomology of $\mathcal{O}_{\mathbb{P}(\mathcal{E})}(j - i)$
  vanishes whenever $-n \leq j - i < 0$, by 
  \cite[\sptag{01XT}]{stacks-project}, so 
  $\pi_\ast (\mathcal{O}_{\mathbb{P}(\mathcal{E})}(j -
  i))=0$.
  Hence $\eqref{eq-bundle-so}$ vanishes for $-n \leq j - i < 0$,
  so the sequence in \eqref{eq-pb-sod} is indeed semi-orthogonal.

  Finally, we show that the sequence is
  full.
  By  Lemma~\ref{lemma-compact-ample},
  the object
  $\mathcal{O}_{\mathbb{P}(\mathcal{E})}(a) \oplus \cdots \oplus \mathcal{O}_{\mathbb{P}(\mathcal{E})}(a + n)$
  is a generator for $\D_\qc(\mathbb{P}(\mathcal{E}))$,
  so by Lemma~\ref{l-sod-generator} it is enough to verify that this
  object is in the triangulated hull
  of the semi-orthogonal sequence $\im \Phi_a, \ldots,
  \im \Phi_{a + n}$ of right admissible subcategories.
  But this is obvious since
  $\Phi_i(\mathcal{O}_S) = \mathcal{O}_{\mathbb{P}(\mathcal{E})}(i)$.
\end{proof}

By Theorem~\ref{t:restrict-pf-coh-sg},
we immediately get the following corollary of Theorem~\ref{t:sod-projective-bundle}.

\begin{corollary}
\label{c:sod-projective-bundle}
Keep the notation from Theorem~\ref{t:sod-projective-bundle}.
Similarly as in the statement of Theorem~\ref{t:restrict-pf-coh-sg},
we let $\Phi^\pf_i$, $\Phi^\pc_i$ and~$\Phi^\sg_i$ denote the induced functors
between derived categories of perfect complexes,
derived categories of locally bounded pseudo-coherent complexes,
and singularity categories, respectively.

Then we have a semi-orthogonal decomposition
\begin{align}
\D_\pf(\mathbb{P}(\mathcal{E})) & = \langle \im \Phi^\pf_a, \ldots, \im \Phi^\pf_{a + n}\rangle\\
\intertext{into admissible subcategories and semi-orthogonal decompositions}
\D^\locbd_\pc(\mathbb{P}(\mathcal{E})) & = \langle \im \Phi^\pc_a, \ldots, \im \Phi^\pc_{a + n}\rangle,\\
\D_\sg(\mathbb{P}(\mathcal{E})) & = \langle \im \Phi^\sg_a, \ldots, \im \Phi^\sg_{a + n}\rangle
\end{align}
into right admissible subcategories.
\end{corollary}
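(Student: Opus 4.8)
The plan is to deduce the corollary directly from Theorem~\ref{t:restrict-pf-coh-sg}, which is precisely the abstract mechanism for transporting a semi-orthogonal decomposition of $\D_\qc(X)$ arising from Fourier--Mukai transforms down to the subcategories $\D_\pf(X)$ and $\D^\locbd_\pc(X)$ and to the singularity category $\D_\sg(X)$. All of its hypotheses have in fact already been checked in the proof of Theorem~\ref{t:sod-projective-bundle}: there we saw that each $\Phi_i$, for $i = a, \ldots, a+n$, is a Fourier--Mukai transform over $S$ (with the kernel datum $K = \mathbb{P}(\mathcal{E})$, $p = \pi$, $q = \id$, $\mathcal{K} = \mathcal{O}_{\mathbb{P}(\mathcal{E})}(i)$ obtained from Proposition~\ref{prop-preserving}), that each $\Phi_i$ is fully faithful, and that $\D_\qc(X) = \langle \im \Phi_a, \ldots, \im \Phi_{a+n}\rangle$ is a semi-orthogonal decomposition. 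This is exactly the input required by Theorem~\ref{t:restrict-pf-coh-sg}, applied to the collection of $n+1$ transforms $\Phi_a, \ldots, \Phi_{a+n}$ (all with source $S$), after the obvious relabelling of the index set.

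Concretely, I would first recall from Proposition~\ref{prop-fm-restriction} that each $\Phi_i$ restricts to the functors $\Phi^\pf_i$ on perfect complexes and $\Phi^\pc_i$ on locally bounded pseudo-coherent complexes, hence induces the functor $\Phi^\sg_i$ on singularity categories, and that by the same proposition these restricted functors admit right adjoints; moreover, as noted after Proposition~\ref{prop-fm-restriction}, the functors $\Phi^\pf_i$ admit left adjoints given explicitly by \eqref{eq-fm-pf-left}. This is what makes the subcategories $\im \Phi^\pf_i$ \emph{admissible} (two-sided) rather than merely right admissible. Then Theorem~\ref{t:restrict-pf-coh-sg} --- whose proof runs through Corollary~\ref{c:restrict-sod-right-admissible} and Proposition~\ref{p:restrict-comonadic-sod} by restricting the idempotent comonads attached to the $\Phi_i$ along the inclusions $\D_\pf(X) \subset \D^\locbd_\pc(X) \subset \D_\qc(X)$ and along the Verdier quotient $\D^\locbd_\pc(X) \to \D_\sg(X)$ --- applies directly and produces the three displayed semi-orthogonal decompositions, with the one for $\D_\pf(X)$ into admissible subcategories and the other two into right admissible subcategories.

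I do not expect a genuine obstacle here: the corollary is purely a matter of feeding the output of Theorem~\ref{t:sod-projective-bundle} into Theorem~\ref{t:restrict-pf-coh-sg}, so the only bookkeeping is matching the indexing conventions and pointing to the source of the left adjoints on perfect complexes. No further computation involving $\mathbb{P}(\mathcal{E})$ is needed.
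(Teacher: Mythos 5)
Your proposal is correct and matches the paper's own argument exactly: the corollary is stated as an immediate consequence of feeding the semi-orthogonal decomposition from Theorem~\ref{t:sod-projective-bundle} into Theorem~\ref{t:restrict-pf-coh-sg}. Your additional remarks on the source of the right and left adjoints (Proposition~\ref{prop-fm-restriction} and \eqref{eq-fm-pf-left}) simply make explicit what the paper leaves implicit.
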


\subsection{Blow-ups}
\label{sec:blow-ups}
In this subsection, we describe the semi-orthogonal decomposition associated to the blow-up of an algebraic stack
in a regular sheaf of ideals.
This semi-orthogonal decomposition was first described by Orlov \cite[Theorem~4.3]{orlov1992}
in the less general setting of a blow-up of a smooth variety in a smooth subvariety.

The standard reference for blow-ups in the generality we work in
is \cite[Exposé~VII]{sga6}.
Recall from Remark~\ref{rem-regular-immersion}
that we follow the conventions about regular immersions from this source. 
Let $\iota\colon Z \to X$ be a regular closed immersion of algebraic stacks.
By blowing up $X$ in $Z$, we obtain a cartesian diagram
\begin{equation}
\label{eq-blow-up}
\xymatrix{
E \ar[d]_-\rho \ar[r]^-\kappa & \widetilde{X} \ar[d]^-\pi \\
Z \ar[r]^-\iota & X \\
}
\end{equation}
where $E$ is the exceptional divisor of the blow-up.
The morphism $\pi$ is projective with twisting sheaf $\mathcal{O}_{\widetilde{X}}(1) \cong \mathcal{O}_{\widetilde{X}}(-E)$.
Recall that the conormal bundle $\mathcal{N}_{Z/X}$ of the regular immersion
is locally free and that $\rho \colon E \to Z$ is isomorphic to the projectivization of $\mathcal{N}_{Z/X}$.
If the rank of $\mathcal{N}_{Z/X}$ is constant $c$,
then we say that $Z$ has constant codimension $c$ in $X$.

\begin{theorem}
\label{t:sod-blow-up}
Let $X$ be an algebraic stack and $\iota\colon Z \hookrightarrow
X$ a regular closed immersion of constant
codimension $c \geq 0$.
Consider the blow-up diagram \eqref{eq-blow-up},
and define the functors 
\begin{align*}
\mathrlap{\Phi_i}{\phantom{\Phi_0}}  & \colon \D_\qc(Z) \, \to \D_\qc(\widetilde{X}), &
\mathcal{F} & \mapsto \mathcal{O}_{\widetilde{X}}(-iE) \otimes \kappa_* \rho^*(\mathcal{F}),
\qquad i < 0,\\
\Phi_0  & \colon \D_\qc(X) \, \to \D_\qc(\widetilde{X}), &
\mathcal{F} & \mapsto \pi^*(\mathcal{F}).
\end{align*}
Then each functor $\Phi_i$ is a Fourier--Mukai transform over
$X$.
Moreover, for each $i \in \{-c+1, \ldots, 0\}$,
the functor $\Phi_i$ is fully faithful and we have a semi-orthogonal decomposition
\begin{equation}
\label{eq-sod-blow-up}
\D_\qc(\widetilde{X})=\langle \im \Phi_{-c + 1}, \ldots, \im \Phi_{0} \rangle
\end{equation}
into right admissible subcategories. 
\end{theorem}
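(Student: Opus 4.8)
The plan is to deduce the theorem from the geometric conservative descent Theorem~\ref{t:main-geometric}, which reduces it to an explicit model over an affine base, and then to run, in a slightly more elaborate form, the three-step scheme of the proof of Theorem~\ref{t:sod-projective-bundle}.

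\emph{Fourier--Mukai transforms and reduction.} I would first realise each $\Phi_i$ as a Fourier--Mukai transform over $X$: $\Phi_0$ via the correspondence $X \xla{\pi} \widetilde{X} \xra{\id} \widetilde{X}$ with kernel $\mathcal{O}_{\widetilde{X}}$, and $\Phi_i$ for $i < 0$ via $Z \xla{\rho} E \xra{\kappa} \widetilde{X}$ with kernel $\kappa^*\mathcal{O}_{\widetilde{X}}(-iE)$, which is a line bundle and hence perfect; the projection formula identifies the functor~\eqref{eq-fm-functor} with $\Phi_i$ in each case. The axioms of Definition~\ref{def-fourier-mukai} are then exactly what Proposition~\ref{prop-preserving} supplies, because $\pi$ is a blow-up, $\rho$ a projectivized vector bundle, and $\kappa$ a regular closed immersion (the exceptional divisor $E$ being an effective Cartier divisor): all of $\pi,\rho,\kappa,\id$ are proper, perfect and concentrated with \ref{enum:pty-quasi-perfect} and \ref{enum:pty-quasi-proper} holding uniformly, and $\kappa^\times\mathcal{O}_{\widetilde{X}}$ is perfect with $\kappa$ satisfying \ref{enum:pty-base-change} uniformly (trivially so for $\id$). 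By Theorem~\ref{t:main-geometric} the full faithfulness of the $\Phi_i$ and the semi-orthogonal decomposition~\eqref{eq-sod-blow-up} may thus be verified after any faithfully flat base change, and since blow-ups, regular immersions and projectivizations of vector bundles all commute with flat base change, the base-changed data have the same shape; so we may assume that $X = \Spec R$ is affine and that $Z = V(f_1,\dots,f_c)$ for a regular sequence $f_1,\dots,f_c$ (reduce first to an affine smooth cover of $X$, then Zariski-localize), whence $\widetilde{X}$ is the closed subscheme of $\mathbb{P}^{c-1}_X = \Proj R[t_1,\dots,t_c]$ cut out by the $f_it_j-f_jt_i$. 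In this situation $\widetilde{X}$ is quasi-projective over the affine $X$, the sheaf $\mathcal{O}_{\widetilde{X}}(1) = \mathcal{O}_{\widetilde{X}}(-E)$ is ample, and $E \cong \mathbb{P}(\mathcal{N}_{Z/X})$ with $\mathcal{N}_{Z/X}$ locally free of rank $c$; write $\mathcal{M} := \mathcal{O}_{\widetilde{X}}(1)|_E$ for the corresponding $\rho$-ample generator. The cases $c \le 1$ being trivial, assume $c \ge 2$.

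\emph{Full faithfulness and semi-orthogonality.} For $\Phi_0 = \pi^*$, Lemma~\ref{lemma-fully-faithful} reduces full faithfulness to $\mathcal{O}_X \sira \pi_*\mathcal{O}_{\widetilde{X}}$, i.e.\ to the classical fact $\pi_*\mathcal{O}_{\widetilde{X}} \cong \mathcal{O}_X$ for the blow-up of a regular sequence. For $i<0$, twisting by $\mathcal{O}_{\widetilde{X}}(-iE)$ is an autoequivalence, so it suffices to show $\kappa_*\rho^* \colon \D_\qc(Z) \to \D_\qc(\widetilde{X})$ is fully faithful; its right adjoint is $\rho_*\kappa^\times$, and combining the self-intersection triangle $\mathcal{M} \to \mathcal{O}_E \to \kappa^*\kappa_*\mathcal{O}_E \to \Sigma\mathcal{M}$, whose first arrow vanishes (a local equation of $E$ restricting to zero on $E$, so $\kappa^*\kappa_*\mathcal{O}_E \cong \mathcal{O}_E \oplus \mathcal{M}[1]$), the identity $\kappa^\times(-) \cong \kappa^*(-) \otimes \kappa^\times\mathcal{O}_{\widetilde{X}}$ with $\kappa^\times\mathcal{O}_{\widetilde{X}} \cong \mathcal{M}^{-1}[-1]$, the projection formula, and the projective-bundle vanishings $\rho_*\mathcal{O}_E \cong \mathcal{O}_Z$ and $\rho_*\mathcal{M}^{-1} = 0$, yields $\rho_*\kappa^\times\kappa_*\rho^*\mathcal{F} \cong \mathcal{F}$ functorially, with unit the identity. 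The same self-intersection computation gives semi-orthogonality: for $i>j$ with $i,j<0$ in $\{-c+1,\dots,0\}$, adjunction and the projection formula rewrite $\Hom(\Phi_i\mathcal{F},\Phi_j\mathcal{G})$, after cancelling the twists, as a sum of groups $\Hom_Z(\mathcal{F},\rho_*\mathcal{M}^{k}\otimes\mathcal{G})$ with $k \in \{j-i,\,j-i-1\}\subseteq\{-c+1,\dots,-1\}$, which vanish because $\rho_*\mathcal{M}^k = 0$ for $-c<k<0$; for $i=0>j$, using $\pi\kappa=\iota\rho$ the group becomes $\Hom_X(\mathcal{F},\iota_*(\rho_*\mathcal{M}^{j}\otimes\mathcal{G})) = 0$ for the same reason.

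\emph{Fullness.} The non-vanishing loci of $t_1,\dots,t_c$ are affine and cover $\widetilde{X}$, so by Lemma~\ref{lemma-compact-ample} the bundle $\bigoplus_{k=-c+1}^{0}\mathcal{O}_{\widetilde{X}}(k)$ is a generator of $\D_\qc(\widetilde{X})$, and by Lemma~\ref{l-sod-generator} it is enough to place each $\mathcal{O}_{\widetilde{X}}(k)$, $-c+1\le k\le 0$, in the triangulated hull $\mathcal{T}'$ of $\im\Phi_{-c+1},\dots,\im\Phi_0$. Here $\mathcal{O}_{\widetilde{X}} = \Phi_0(\mathcal{O}_X)\in\mathcal{T}'$ and $\kappa_*\mathcal{M}^{k} = \Phi_k(\mathcal{O}_Z)\in\im\Phi_k\subset\mathcal{T}'$ for $k\in\{-c+1,\dots,-1\}$, and feeding these into the triangles $\mathcal{O}_{\widetilde{X}}(k+1)\to\mathcal{O}_{\widetilde{X}}(k)\to\kappa_*\mathcal{M}^{k}\to\Sigma\mathcal{O}_{\widetilde{X}}(k+1)$ obtained by twisting $0\to\mathcal{O}_{\widetilde{X}}(-E)\to\mathcal{O}_{\widetilde{X}}\to\kappa_*\mathcal{O}_E\to0$, one descends from $k=0$ to $k=-c+1$ and obtains $\mathcal{O}_{\widetilde{X}}(k)\in\mathcal{T}'$ throughout. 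Finally, each $\im\Phi_i$ is right admissible, being the essential image of the fully faithful functor $\Phi_i$, which admits the right adjoint~\eqref{eq-fm-functor-right}.

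\emph{Main obstacle.} The conceptual content has been packaged into Theorem~\ref{t:main-geometric}; what remains is careful bookkeeping around the non-tor-independent square~\eqref{eq-blow-up}. Its crux is the derived self-intersection computation of $\kappa^*\kappa_*$ and $\kappa^\times$ along the exceptional divisor, on which both the full faithfulness of the $\Phi_i$ with $i<0$ and the semi-orthogonality estimate depend, together with keeping track that every twist $\mathcal{M}^k$ produced lands in the cohomology-vanishing window $-c<k<0$ of the $\mathbb{P}^{c-1}$-bundle $\rho$; one must also not overlook the classical fact $\pi_*\mathcal{O}_{\widetilde{X}}\cong\mathcal{O}_X$, which is precisely what the reduction to a scheme-theoretic model is needed for in the case $i=0$.
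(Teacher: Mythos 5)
Your proposal is correct and follows essentially the same route as the paper: realize the $\Phi_i$ as relative Fourier--Mukai transforms via Proposition~\ref{prop-preserving}, reduce by Theorem~\ref{t:main-geometric} to the affine model of a regular sequence, then play the self-intersection triangle \eqref{eq-tria-reg-immersion} against the projective-bundle vanishing $\rho_*\mathcal{M}^k=0$ for $-c<k<0$, and finish fullness with Lemmas~\ref{lemma-compact-ample} and~\ref{l-sod-generator}. The only divergence is cosmetic: for full faithfulness with $i<0$ the paper factors the map on $\Hom$-groups through the counit $\kappa^*\kappa_*\rho^*\mathcal{E}\to\rho^*\mathcal{E}$ and invokes Theorem~\ref{t:sod-projective-bundle}, whereas you compute $\Psi_i\Phi_i\cong\id$ directly from the splitting of $\kappa^*\kappa_*$ and the formula $\kappa^\times\cong\kappa^*(-)\otimes\kappa^\times\mathcal{O}_{\widetilde{X}}$ --- equivalent in substance, though your version still owes the (easy) verification that the adjunction unit, and not merely some abstract map, realizes this isomorphism.
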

\begin{proof}
By Proposition~\ref{prop-preserving} and the projection formula, each $\Phi_i$ is a
Fourier--Mukai transform over $X$.
Indeed, for $i < 0$, we let
$K = E$,
$\mathcal{K} = \mathcal{O}_E(i) \cong \kappa^*\mathcal{O}_{\widetilde{X}}(i)$,
$p = \rho$,
$q = \kappa$
in the notation from Definition~\ref{def-fourier-mukai}.
Similarly, for $i = 0$, we let
$K = \widetilde{X}$,
$\mathcal{K} = \mathcal{O}_{\widetilde{X}}$,
$p = \pi$,
$q = \id$.
Hence by conservative descent, as stated in
Theorem~\ref{t:main-geometric}, 
and the fact that base change along any flat morphism preserves
blow-ups (cf.\ \cite[\sptag{0805}]{stacks-project}),
the theorem can be verified after a faithfully flat base change.
In particular, we may assume that $X = \Spec R$ is an affine scheme and that the ideal
defining $Z$ is generated by a regular sequence of length $c$.
The theorem is trivial when $c \leq 1$, so let us assume that $c > 1$.

First we prove that $\Phi_i$ is fully faithful for each $i$.
For $i = 0$ it is enough to verify that the canonical morphism
$\mathcal{O}_X \to \pi_*\pi^*\mathcal{O}_X$ is an isomorphism, by
Lemma~\ref{lemma-fully-faithful}.
This follows from the fact that $\HH^r(\widetilde{X}, \mathcal{O}_{\widetilde{X}})$
vanishes for $r > 0$ and is isomorphic to $R$ for $r = 0$, as is
shown in \cite[Exposé VII, Lemme~3.5]{sga6}.
Assume that $i < 0$ and let $\mathcal{E}$ and $\mathcal{F}$ be objects of $\D_\qc(Z)$.
The map
$\Hom(\mathcal{E}, \mathcal{F})
\to
\Hom(\Phi_i(\mathcal{E}), \Phi_i(\mathcal{F}))$
factors as
\begin{align}
\Hom(\mathcal{E}, \mathcal{F})
& \xrightarrow{\sim}
\Hom(\rho^\ast\mathcal{E}, \rho^\ast\mathcal{F})
\xrightarrow{\epsilon}
\Hom(\kappa^\ast\kappa_\ast\rho^\ast\mathcal{E},
  \rho^\ast\mathcal{F}) \\
\notag
& \xrightarrow{\sim}
\Hom(\kappa_\ast\rho^\ast\mathcal{E}, \kappa_\ast\rho^\ast\mathcal{F})
\xrightarrow{\sim}
\Hom(\Phi_i(\mathcal{E}), \Phi_i(\mathcal{F})),
\end{align}
where the first map is bijective by
Theorem~\ref{t:sod-projective-bundle}, and the last two maps are the obvious bijections.
The map $\epsilon$ is obtained
from the evaluation
$\kappa^\ast\kappa_\ast \rho^\ast \mathcal{E} \to \rho^\ast
\mathcal{E}$
of the adjunction counit at $\rho^\ast \mathcal{E}$.
It suffices to prove that $\epsilon$ is an isomorphism.
Since $E$ is an effective Cartier divisor,
the adjunction counit evaluated at an arbitrary object
$\mathcal{M}$ of $\D_\qc(E)$ fits into a triangle
\begin{equation}
\label{eq-tria-reg-immersion}
\Sigma \mathcal{M}(1)
\ra \kappa^* \kappa_* \mathcal{M}
\ra \mathcal{M} 
\ra \Sigma^2 \mathcal{M}(1)
\end{equation}
in $\D_\qc(E)$ by \cite[Lemma~4.2]{bls2016}
or \cite[Porisme 3.5]{thomason1993}, where $\mathcal{M}(1)$ denotes the usual Serre twist
$\mathcal{M} \otimes\mathcal{O}_{\widetilde{X}}(1)$.
Consider the particular case $\mathcal{M} = \rho^\ast\mathcal{E}$.
Note that $E$ is the projectivization of the conormal bundle of the inclusion $Z \subset X$,
which has rank $c$ by assumption.
By the semi-orthogonal decomposition in Theorem~\ref{t:sod-projective-bundle},
the functor $\Hom(-, \rho^\ast\mathcal{F})$
vanishes on the first and the last object in \eqref{eq-tria-reg-immersion}.
Hence $\epsilon$ is an isomorphism as required.

Next we verify that the sequence 
of categories
in \eqref{eq-sod-blow-up}
is semi-orthogonal.
First assume that $-c + 1 \leq j < i < 0$ and let $\mathcal{E}$ and~$\mathcal{F}$
be objects in~$\D_\qc(Z)$.
Using the adjunction isomorphism and twisting with $\mathcal{O}_{\widetilde{X}}(-i)$,
we get
\begin{equation}
\label{eq-so-blow-up}
\Hom(\Phi_i(\mathcal{E}), \Phi_j(\mathcal{F}))
\cong
\Hom(\kappa^\ast\kappa_\ast\rho^\ast\mathcal{E}, (\rho^\ast\mathcal{F})(j-i)).
\end{equation}
Similarly as above, we apply the functor
$\Hom(-, (\rho^\ast\mathcal{F})(j-i))$
to the triangle \eqref{eq-tria-reg-immersion} with $\mathcal{M} = \rho^\ast\mathcal{E}$.
By the semi-orthogonal decomposition in
Theorem~\ref{t:sod-projective-bundle}
it vanishes on the first and third object because
$- c + 1 < j - i < 0$.
This implies that both sides of \eqref{eq-so-blow-up} also vanish, as desired.
Now assume instead that $\mathcal{E}$ is an object of $\D_\qc(X)$,
that $\mathcal{F}$ is an object of $\D_\qc(Z)$, and that $-c+1 \leq
i <0$.
Then
\begin{align}
\Hom(\Phi_0(\mathcal{E}), \Phi_i(\mathcal{F}))
& \cong
\Hom(\kappa^\ast\pi^\ast\mathcal{E}, (\rho^\ast\mathcal{F})(i)) \\
\notag
& \cong
\Hom(\rho^\ast\iota^\ast\mathcal{E}, (\rho^\ast\mathcal{F})(i))
\end{align}
vanishes, again by the semi-orthogonal decomposition in
Theorem~\ref{t:sod-projective-bundle}.
Hence the sequence in \eqref{eq-sod-blow-up} is indeed semi-orthogonal.

Finally, we prove that our sequence is full. 
Let $\mathcal{T}$ denote the triangulated hull in
$\D_\qc(\tilde{X})$ of the subcategories
$\im \Phi_{-c + 1}, \ldots, \im \Phi_{0}$.
It clearly contains $\mathcal{O}_{\widetilde{X}}$ and $\mathcal{O}_E(i)$
for $-c < i < 0$.
By twisting the exact sequence
\begin{equation}
  0 \to \mathcal{O}_{\widetilde{X}}(1) \to
  \mathcal{O}_{\widetilde{X}} \to \mathcal{O}_E \to 0 
\end{equation}
with $\mathcal{O}_{\widetilde{X}}(i)$ for $-c < i < 0$,
we see that $\mathcal{T}$ also contains $\mathcal{O}_{\widetilde{X}}(i)$
for $-c < i < 0$.
In particular, the category $\mathcal{T}$ contains
$\mathcal{G} = \mathcal{O}_{\widetilde{X}}(-c+1) \oplus \cdots \oplus \mathcal{O}_{\widetilde{X}}(0)$.
Since the ideal defining $Z$ is generated by $c$ elements,
the blow-up $\widetilde{X}$ embeds into $\mathbb{P}^{c-1}_R$.
Hence the bundle $\mathcal{G}$ generates $\D_\qc(\widetilde{X})$ by Lemma~\ref{lemma-compact-ample}.
Therefore $\mathcal{T} = \D_\qc(\widetilde{X})$ by
Lemma~\ref{l-sod-generator}. 
This concludes the proof. 
\end{proof}

By Theorem~\ref{t:restrict-pf-coh-sg},
we immediately get the following corollary of Theorem~\ref{t:sod-blow-up}.

\begin{corollary}
\label{c:sod-blow-up}
Keep the notation from Theorem~\ref{t:sod-blow-up}.
Similarly as in the statement of Theorem~\ref{t:restrict-pf-coh-sg},
we let $\Phi^\pf_i$, $\Phi^\pc_i$ and~$\Phi^\sg_i$ denote the induced functors
between the derived categories of perfect complexes,
derived categories of locally bounded pseudo-coherent complexes,
and singularity categories, respectively.

Then we have a semi-orthogonal decomposition
\begin{align}
\D_\pf(\widetilde{X}) & = \langle \im \Phi^\pf_{-c+1}, \ldots, \im \Phi^\pf_{0}\rangle\\
\intertext{into admissible subcategories and semi-orthogonal decompositions}
\D^\locbd_\pc(\widetilde{X}) & = \langle \im \Phi^\pc_{-c+1}, \ldots, \im \Phi^\pc_{0}\rangle, \\ 
\D_\sg(\widetilde{X}) & = \langle \im \Phi^\sg_{-c+1}, \ldots, \im \Phi^\sg_{0}\rangle
\end{align}
into right admissible subcategories.
\end{corollary}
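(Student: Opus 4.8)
The plan is to obtain the corollary as a formal consequence of Theorem~\ref{t:sod-blow-up} together with the general restriction result Theorem~\ref{t:restrict-pf-coh-sg}, in exactly the same way that Corollary~\ref{c:sod-projective-bundle} was deduced from Theorem~\ref{t:sod-projective-bundle}. First I would recall that Theorem~\ref{t:sod-blow-up} has already supplied all the input that is needed: the functors $\Phi_i$ for $i \in \{-c+1, \ldots, 0\}$ are Fourier--Mukai transforms over $X$, each of them is fully faithful, and $\D_\qc(\widetilde{X}) = \langle \im \Phi_{-c+1}, \ldots, \im \Phi_0\rangle$ is a semi-orthogonal decomposition. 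These are precisely the hypotheses of Theorem~\ref{t:restrict-pf-coh-sg}, with $\widetilde{X}$ playing the role of $X$, the base stack taken to be $X$ itself, and the source stacks being $Z$ (for $i<0$) and $X$ (for $i=0$); note that the statement of Theorem~\ref{t:restrict-pf-coh-sg} allows the sources of the various transforms to differ.

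Applying Theorem~\ref{t:restrict-pf-coh-sg} then yields at once the three asserted decompositions: $\D_\pf(\widetilde{X}) = \langle \im \Phi^\pf_{-c+1}, \ldots, \im \Phi^\pf_0\rangle$ into admissible subcategories, and $\D^\locbd_\pc(\widetilde{X}) = \langle \im \Phi^\pc_{-c+1}, \ldots, \im \Phi^\pc_0\rangle$ as well as $\D_\sg(\widetilde{X}) = \langle \im \Phi^\sg_{-c+1}, \ldots, \im \Phi^\sg_0\rangle$ into right admissible subcategories. The stronger conclusion for perfect complexes---namely that the components are \emph{admissible}, not merely right admissible---reflects the existence of the left adjoint \eqref{eq-fm-pf-left} of $\Phi^\pf_i$, which is part of the statement of Theorem~\ref{t:restrict-pf-coh-sg}.

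There is no genuine obstacle here: the substantive work has all been carried out earlier. The only points requiring care are purely bookkeeping: confirming that the restricted functors $\Phi^\pf_i$, $\Phi^\pc_i$ and the induced $\Phi^\sg_i$ are the ones coming from Proposition~\ref{prop-fm-restriction}, and that the hypotheses of Theorem~\ref{t:restrict-pf-coh-sg} are met verbatim by the output of Theorem~\ref{t:sod-blow-up}. The actual content is packaged in Theorem~\ref{t:restrict-pf-coh-sg} itself, which in turn rests on Corollary~\ref{c:restrict-sod-right-admissible} and Proposition~\ref{prop-fm-restriction}.
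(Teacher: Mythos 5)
Your proposal is correct and is exactly the paper's argument: the corollary is stated there as an immediate application of Theorem~\ref{t:restrict-pf-coh-sg} to the semi-orthogonal decomposition furnished by Theorem~\ref{t:sod-blow-up}. Your additional bookkeeping remarks (different source stacks for the $\Phi_i$, admissibility for perfect complexes via the left adjoint \eqref{eq-fm-pf-left}) match what the paper relies on implicitly.
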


\subsection{Root stacks}
The \emph{root construction} is a construction which formally adjoins a root of some order $r > 0$
to an effective Cartier divisor on a scheme or an algebraic stack.
This is a purely stacky construction,
which yields a genuine algebraic stack in all but the trivial cases where we take the first root of a divisor.
The root construction was originally described by Cadman in~\cite{cadman2007}.
We refer to \cite[Section~3]{bls2016} for a summary of its most important properties.

A root stack is a birational modification which has many similarities with a blow-up.
In particular, its derived category admits a semi-orthogonal decomposition,
as first noted by Ishii--Ueda \cite[Proposition~6.1]{iu2015}.
We gave a proof in a more general setting in \cite[Theorem~4.7]{bls2016}.
In the proof we left out the details of the reduction to the local setting
with a reference to this article.
We restate the theorem here as Theorem~\ref{t:sod-root-stack} and fill in the missing part of the proof.

Let $X$ be an algebraic stack and let $\iota\colon D \hookrightarrow X$ be an effective Cartier divisor.
For a given integer $r > 0$, the root construction gives a diagram
\begin{equation}
\label{eq-root-diagram}
\xymatrix{
E \ar[d]_-{\rho} \ar[r]^-{\kappa} & \widetilde{X} \ar[d]^-{\pi} \\
D \ar[r]^-{\iota} & X. \\
}
\end{equation}
Note that this diagram fails to be cartesian whenever $r > 1$.
Rather the pull-back of $D$ along $\pi$ can be identified with $rE$.
This is the motivation for the term \emph{root construction}.

\begin{theorem}
\label{t:sod-root-stack}
Let $X$ be an algebraic stack and $\iota\colon D \hookrightarrow X$ an effective Cartier divisor.
Fix an integer $r > 0$ and consider the root diagram \eqref{eq-root-diagram}
associated to the $r$-th root construction.
Define the functors
\begin{align*}
\mathrlap{\Phi_i}{\phantom{\Phi_0}}  & \colon \D_\qc(D) \, \to \D_\qc(\widetilde{X}), &
\mathcal{F} & \mapsto \mathcal{O}_{\widetilde{X}}(-iE) \otimes \kappa_* \rho^*(\mathcal{F}),
\qquad -r < i < 0,\\
\Phi_0  & \colon \D_\qc(X) \, \to \D_\qc(\widetilde{X}), &
\mathcal{F} & \mapsto \pi^*(\mathcal{F}).
\end{align*}
Then each functor $\Phi_i$ is a Fourier--Mukai transform over $X$.
Moreover, for each $i \in \{-r + 1, \dots, 0\}$,
the functor $\Phi_i$ is fully faithful and we have a semi-orthogonal decomposition
\begin{equation}
\label{eq-sod-root}
\D_\qc(\widetilde{X})=\langle \im \Phi_{-r + 1}, \ldots, \im \Phi_{0} \rangle
\end{equation}
into right admissible subcategories. 
\end{theorem}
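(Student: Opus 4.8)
The plan is to run the same machine as in the proofs of Theorems~\ref{t:sod-projective-bundle} and~\ref{t:sod-blow-up}: realize the $\Phi_i$ as relative Fourier--Mukai transforms, invoke the geometric conservative descent theorem (Theorem~\ref{t:main-geometric}) to reduce to an affine base, and deal with that case. First I would verify the conditions of Definition~\ref{def-fourier-mukai}. For $\Phi_0 = \pi^*$ take $K = \widetilde{X}$, $p = \pi$, $q = \id$ and $\mathcal{K} = \mathcal{O}_{\widetilde{X}}$; for $-r < i < 0$ take $K = E$, $p = \rho$, $q = \kappa$ and $\mathcal{K} = \kappa^*\mathcal{O}_{\widetilde{X}}(i)$, so that the projection formula identifies $\kappa_*(\kappa^*\mathcal{O}_{\widetilde{X}}(i) \otimes \rho^*(-))$ with $\mathcal{O}_{\widetilde{X}}(-iE) \otimes \kappa_*\rho^*(-)$, as in the statement. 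The remaining conditions follow from Proposition~\ref{prop-preserving}: $\pi$ is a root stack in an effective Cartier divisor (case~\ref{enum:root}), $\rho$ is a $\mu_r$-gerbe over $D$ (case~\ref{enum:gerbe}; see \cite[Section~3]{bls2016}), and $\kappa$ is a regular closed immersion of codimension one, since $\pi^*D = rE$ exhibits $E$ as an effective Cartier divisor on $\widetilde{X}$ (case~\ref{enum:regimm}); the last case also provides that $\kappa^\times\mathcal{O}_{\widetilde{X}}$ is perfect and that $\kappa$ satisfies property~\ref{enum:pty-base-change} uniformly, while the identity morphism is harmless. The case $r = 1$ is vacuous (then $\widetilde{X} = X$), so I assume $r \geq 2$.

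Next I would apply Theorem~\ref{t:main-geometric} with base stack $X$. Choosing a faithfully flat morphism $X' \to X$, where $X'$ is a disjoint union of affine schemes smooth over $X$ on which $\mathcal{O}_X(D)$ is trivial, and using that the root construction commutes with arbitrary base change, this reduces the whole theorem to the case $X = \Spec R$, $D = V(t)$ with $t \in R$ a nonzerodivisor. In that situation the assertions are exactly \cite[Theorem~4.7]{bls2016}; alternatively one argues directly, following the proof of Theorem~\ref{t:sod-blow-up}. The structural input replacing the projective-bundle computation of Theorem~\ref{t:sod-projective-bundle} is the behaviour of $\rho_*\mathcal{O}_E(k)$ and of $\Hom$-groups on the $\mu_r$-gerbe $E$, governed by the weight decomposition $\D_\qc(E) = \bigoplus_{j \in \mathbb{Z}/r}\D_\qc(E)_j$ (\'etale-local over $D$), in which $\rho^*$ lands in weight $0$, the functor $- \otimes \mathcal{O}_E(1)$ raises weight by $1$, and $\rho_*$ kills all nonzero weights. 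Granting this: full faithfulness of $\Phi_0$ comes from Lemma~\ref{lemma-fully-faithful} and $\pi_*\mathcal{O}_{\widetilde{X}} = \mathcal{O}_X$; full faithfulness of $\Phi_i$ for $i < 0$ comes from factoring $\Hom(\mathcal{E},\mathcal{F}) \to \Hom(\Phi_i\mathcal{E},\Phi_i\mathcal{F})$ through $\rho^*$ and $\kappa_*$ and feeding the triangle $\Sigma\mathcal{M}(1) \to \kappa^*\kappa_*\mathcal{M} \to \mathcal{M} \to \Sigma^2\mathcal{M}(1)$ of \cite[Lemma~4.2]{bls2016} into $\Hom(-,\rho^*\mathcal{F})$; and semi-orthogonality comes from the same weight-counting applied to the adjunction and twist isomorphisms.

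The step I expect to require the most care is fullness in the affine case. Using the twisted structure sequences $0 \to \mathcal{O}_{\widetilde{X}}(i+1) \to \mathcal{O}_{\widetilde{X}}(i) \to \kappa_*\mathcal{O}_E(i) \to 0$ for $-r < i < 0$, one sees that $\bigoplus_{i=-r+1}^{0}\mathcal{O}_{\widetilde{X}}(i)$ lies in the triangulated hull of the $\im\Phi_i$, so by Lemma~\ref{l-sod-generator} it remains to show that this object generates $\D_\qc(\widetilde{X})$. Here, in contrast to the blow-up case, $\widetilde{X}$ is a genuine algebraic stack and Lemma~\ref{lemma-compact-ample} does not apply directly; instead I would use the finite faithfully flat cover $\nu\colon \Spec(R[u]/(u^r - t)) \to \widetilde{X}$ exhibiting $\widetilde{X}$ as $[\Spec(R[u]/(u^r-t))/\mu_r]$, note that $\nu_*\mathcal{O}$ is precisely $\bigoplus_{i=-r+1}^{0}\mathcal{O}_{\widetilde{X}}(i)$ via the decomposition of $R[u]/(u^r-t)$ into $\mu_r$-eigenspaces, and conclude from the fact that the pushforward of a generator along a finite faithfully flat morphism is again a generator (using that $\nu$ is conservative and that $\nu^!$ differs from $\nu^*$ by twisting with an invertible sheaf). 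This would finish the reduction and hence the proof of the semi-orthogonal decomposition \eqref{eq-sod-root}.
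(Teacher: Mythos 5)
Your proposal follows essentially the same route as the paper's proof: realize the $\Phi_i$ as relative Fourier--Mukai transforms via Proposition~\ref{prop-preserving} (with $p=\rho$, $q=\kappa$ for $i<0$ and $p=\pi$, $q=\id$ for $i=0$), invoke the geometric conservative descent Theorem~\ref{t:main-geometric} over the base $X$ to reduce to the case $X=\Spec R$ with $D$ cut out by a single nonzerodivisor, and then settle that case by \cite[Theorem~4.7]{bls2016}. The only difference is that you additionally sketch a direct argument for the affine case (weight decomposition on the $\mu_r$-gerbe $E$, the triangle from \cite[Lemma~4.2]{bls2016}, and generation via the finite flat cover $\Spec(R[u]/(u^r-t))\to\widetilde{X}$), which the paper simply delegates to the cited reference; your sketch is consistent with the argument given there and with the proof of Theorem~\ref{t:sod-blow-up}.
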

\begin{proof}
By Proposition~\ref{prop-preserving} and the projection formula, each $\Phi_i$ is a Fourier--Mukai transform over $X$.
Indeed, for $i \in \{-r + 1, \dots, -1\}$, we let
$K = E$,
$\mathcal{K} = \kappa^*\mathcal{O}_{\widetilde{X}}(-iE)$,
$p = \rho$,
$q = \kappa$
in the notation from Definition~\ref{def-fourier-mukai}.
For $i = 0$, we let
$K = \widetilde{X}$,
$\mathcal{K} = \mathcal{O}_{\widetilde{X}}$,
$p = \pi$,
$q = \id$.
Hence by conservative descent, as stated in Theorem~\ref{t:main-geometric},
the theorem can be verified after a faithfully flat base change.
In particular, we may assume that $X = \Spec R$ is an affine scheme and that the ideal
defining $D$ is generated by a single regular
element.

The rest of the proof, which is very similar to the proof of Theorem~\ref{t:sod-blow-up},
is written down in full detail in~\cite[Theorem~4.7]{bls2016}.
The semi-orthogonal decomposition in the statement of the cited
theorem refers to the category $\D_\pf(\widetilde{X})$, but the proof applies equally well to $\D_\qc(\widetilde{X})$.
Note that there is an obvious typographical error in the formulation of the cited theorem;
two occurrences of $\D(\widetilde{X})$ should be replaced by $\D_\pf(\widetilde{X})$.
\end{proof}

By Theorem~\ref{t:restrict-pf-coh-sg},
we immediately get the following corollary of Theorem~\ref{t:sod-root-stack}.

\begin{corollary}
\label{c:sod-root-stack}
Keep the notation from Theorem~\ref{t:sod-root-stack}.
Similarly as in the statement of Theorem~\ref{t:restrict-pf-coh-sg},
we let $\Phi^\pf_i$, $\Phi^\pc_i$ and~$\Phi^\sg_i$ denote the induced functors
between the derived categories of perfect complexes,
derived categories of locally bounded pseudo-coherent complexes,
and singularity categories, respectively.

Then we have a semi-orthogonal decomposition
\begin{align}
\D_\pf(\widetilde{X}) & = \langle \im \Phi^\pf_{-r+1}, \ldots, \im \Phi^\pf_{0}\rangle
\intertext{into admissible subcategories and semi-orthogonal decompositions}
\D^\locbd_\pc(\widetilde{X}) & = \langle \im \Phi^\pc_{-r+1}, \ldots, \im \Phi^\pc_{0}\rangle,
\\ 
\D_\sg(\widetilde{X}) & = \langle \im \Phi^\sg_{-r+1}, \ldots, \im \Phi^\sg_{0}\rangle
\end{align}
into right admissible subcategories.
\end{corollary}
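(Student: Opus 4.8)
The plan is to deduce this directly from Theorem~\ref{t:restrict-pf-coh-sg}, applied to the semi-orthogonal decomposition already produced in Theorem~\ref{t:sod-root-stack}.

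First I would recall the data supplied by Theorem~\ref{t:sod-root-stack}: for $-r+1\le i\le 0$ each $\Phi_i$ is a fully faithful Fourier--Mukai transform over $X$, with source $\D_\qc(D)$ for $i<0$, source $\D_\qc(X)$ for $i=0$, and target $\D_\qc(\widetilde{X})$, and the essential images form a semi-orthogonal decomposition $\D_\qc(\widetilde{X})=\langle\im\Phi_{-r+1},\dots,\im\Phi_0\rangle$. By Proposition~\ref{prop-fm-restriction} these transforms, together with their right adjoints, preserve perfect and locally bounded pseudo-coherent complexes, hence restrict to the functors $\Phi_i^\pf$ and $\Phi_i^\pc$ of~\eqref{eq-fm-restricted} and induce the functors $\Phi_i^\sg$ of~\eqref{eq-fm-singularity}; all of these admit right adjoints, and $\Phi_i^\pf$ moreover admits the left adjoint~\eqref{eq-fm-pf-left}. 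This is exactly the situation in which Theorem~\ref{t:restrict-pf-coh-sg} is phrased.

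It then remains only to invoke Theorem~\ref{t:restrict-pf-coh-sg}, with base stack $S=X$, source stacks $Z_i$ equal to $D$ for $i<0$ and to $X$ for $i=0$, and the stack called ``$X$'' in the statement of that theorem taken to be $\widetilde{X}$. Its hypotheses, namely fully faithfulness of the $\Phi_i$ together with the semi-orthogonal decomposition of $\D_\qc(\widetilde{X})$, are precisely the conclusions of Theorem~\ref{t:sod-root-stack}, so its conclusion yields the three asserted decompositions: the one of $\D_\pf(\widetilde{X})$ into admissible subcategories, because each $\Phi_i^\pf$ has both a left and a right adjoint, and those of $\D^\locbd_\pc(\widetilde{X})$ and $\D_\sg(\widetilde{X})$ into right admissible subcategories. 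I expect no genuine obstacle here: the whole argument is a formal bookkeeping step, entirely parallel to the deductions of Corollaries~\ref{c:sod-projective-bundle} and~\ref{c:sod-blow-up} from their respective theorems, and the only point to check is that the notational matching above is the intended one.
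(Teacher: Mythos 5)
Your proposal is correct and is exactly the paper's argument: the corollary is stated as an immediate application of Theorem~\ref{t:restrict-pf-coh-sg} to the semi-orthogonal decomposition produced by Theorem~\ref{t:sod-root-stack}, with the preservation and adjoint properties supplied by Proposition~\ref{prop-fm-restriction} and~\eqref{eq-fm-pf-left} just as you describe. Nothing further is needed.
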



\bibliographystyle{myalpha}
\bibliography{references}

\def\cprime{$'$} \def\cprime{$'$} \def\cprime{$'$} \def\cprime{$'$}
  \def\Dbar{\leavevmode\lower.6ex\hbox to 0pt{\hskip-.23ex \accent"16\hss}D}
  \def\cprime{$'$} \def\cprime{$'$}
\begin{thebibliography}{BGLL17}

\bibitem[BBD82]{BBD}
Alexander~A. Be{\u\i}linson, J.~Bernstein, and P.~Deligne.
\newblock Faisceaux pervers.
\newblock In {\em Analysis and topology on singular spaces, I (Luminy, 1981)},
  volume 100 of {\em Ast\'erisque}, pages 5--171. Soc. Math. France, Paris,
  1982.

\bibitem[Be{\u \i}78]{beilinson1978}
Alexander~A. Be{\u \i}linson.
\newblock Coherent sheaves on $\mathbb{P}^n$ and problems of linear algebra.
\newblock {\em Functional Analysis and Its Applications}, 12(3):214--216, 1978.

\bibitem[Ber09]{bernardara-BS-schemes}
Marcello Bernardara.
\newblock A semiorthogonal decomposition for {B}rauer-{S}everi schemes.
\newblock {\em Math. Nachr.}, 282(10):1406--1413, 2009.

\bibitem[BGLL17]{bgll2017}
Daniel Bergh, Sergey~O. Gorchinskiy, Michael Larsen, and Valery~A. Lunts.
\newblock A categorical measure for finite group actions.
\newblock \href{https://arxiv.org/abs/1709.00620v1}{arXiv:1709.00620v1}, 2017.

\bibitem[BK89]{bondal-kapranov-representable-functors}
Alexei~I. Bondal and Mikhail~M. Kapranov.
\newblock Representable functors, {S}erre functors, and
  reconstructions/mutations.
\newblock {\em Izv. Akad. Nauk SSSR Ser. Mat.}, 53(6):1183--1205, 1337, 1989.

\bibitem[BLS16]{bls2016}
Daniel Bergh, Valery~A. Lunts, and Olaf~M. Schn\"urer.
\newblock Geometricity for derived categories of algebraic stacks.
\newblock {\em Selecta Math. (N.S.)}, 22(4):2535--2568, 2016.

\bibitem[Bor94]{borceux-cat-2}
Francis Borceux.
\newblock {\em Handbook of categorical algebra. 2}, volume~51 of {\em
  Encyclopedia of Mathematics and its Applications}.
\newblock Cambridge University Press, Cambridge, 1994.
\newblock Categories and structures.

\bibitem[BS19]{bs2019}
Daniel Bergh and Olaf~M. Schnürer.
\newblock Decompositions of derived categories of gerbes and of families of
  {B}rauer--{S}everi varieties.
\newblock \href{https://arxiv.org/abs/1901.08945}{arXiv:1901.08945}, 2019.

\bibitem[Cad07]{cadman2007}
Charles Cadman.
\newblock Using stacks to impose tangency conditions on curves.
\newblock {\em Amer. J. Math.}, 129(2):405--427, 2007.

\bibitem[Ela12]{elagin2012}
Alexey~D. Elagin.
\newblock Descent theory for semi-orthogonal decompositions.
\newblock {\em Mat. Sb.}, 203(5):33--64, 2012.

\bibitem[HR15]{hr2015}
Jack Hall and David Rydh.
\newblock Algebraic groups and compact generation of their derived categories
  of representations.
\newblock {\em Indiana Univ. Math. J.}, 64:1903--1923, 2015.

\bibitem[HR17]{hr2017}
Jack Hall and David Rydh.
\newblock Perfect complexes on algebraic stacks.
\newblock {\em Compositio Mathematica}, 153(11):2318–2367, 2017.

\bibitem[Huy06]{huybrechts2006}
Daniel Huybrechts.
\newblock {\em Fourier-{M}ukai transforms in algebraic geometry}.
\newblock Oxford Mathematical Monographs. The Clarendon Press Oxford University
  Press, Oxford, 2006.

\bibitem[IU15]{iu2015}
Akira Ishii and Kazushi Ueda.
\newblock The special {M}c{K}ay correspondence and exceptional collections.
\newblock {\em Tohoku Math. J. (2)}, 67(4):585--609, 2015.

\bibitem[Kra10]{krause-localization}
Henning Krause.
\newblock Localization theory for triangulated categories.
\newblock In {\em Triangulated categories}, volume 375 of {\em London Math.
  Soc. Lecture Note Ser.}, pages 161--235. Cambridge Univ. Press, Cambridge,
  2010.

\bibitem[KS74]{kelly-street-review-2-categories}
G.~M. Kelly and Ross Street.
\newblock Review of the elements of {$2$}-categories.
\newblock In {\em Category {S}eminar ({P}roc. {S}em., {S}ydney, 1972/1973)},
  pages 75--103. Lecture Notes in Math., Vol. 420. Springer, Berlin, 1974.

\bibitem[Kuz14]{kuznetsovICM}
Alexander Kuznetsov.
\newblock Semiorthogonal decompositions in algebraic geometry.
\newblock In {\em Proceedings of the {I}nternational {C}ongress of
  {M}athematicians. {V}olume {II}}, pages 635--660, 2014.

\bibitem[Lip09]{lipman2009}
Joseph Lipman.
\newblock Notes on derived functors and {G}rothendieck duality.
\newblock In {\em Foundations of {G}rothendieck duality for diagrams of
  schemes}, volume 1960 of {\em Lecture Notes in Math.}, pages 1--259.
  Springer, Berlin, 2009.

\bibitem[LMB00]{lmb2000}
G{\'e}rard Laumon and Laurent Moret-Bailly.
\newblock {\em Champs alg\'ebriques}, volume~39 of {\em Ergebnisse der
  Mathematik und ihrer Grenzgebiete. 3. Folge.}
\newblock Springer-Verlag, Berlin, 2000.

\bibitem[LN07]{ln2007}
Joseph Lipman and Amnon Neeman.
\newblock Quasi-perfect scheme-maps and boundedness of the twisted inverse
  image functor.
\newblock {\em Illinois J. Math.}, 51(1):209--236, 2007.

\bibitem[LS16]{valery-olaf-matfak-semi-orth-decomp}
Valery~A. Lunts and Olaf~M. Schn{\"u}rer.
\newblock Matrix factorizations and semi-orthogonal decompositions for
  blowing-ups.
\newblock {\em J. Noncommut. Geom.}, 10(3):907--979, 2016.

\bibitem[ML98]{maclane-working-mathematician}
Saunders Mac~Lane.
\newblock {\em Categories for the working mathematician}, volume~5 of {\em
  Graduate Texts in Mathematics}.
\newblock Springer-Verlag, New York, second edition, 1998.

\bibitem[Nee17]{neeman2017}
Amnon Neeman.
\newblock An improvement on the base-change theorem and the functor $f^!$.
\newblock \href{https://arxiv.org/abs/1406.7599v2}{arXiv:1406.7599v2}, 2017.

\bibitem[Orl92]{orlov1992}
Dmitri~O. Orlov.
\newblock Projective bundles, monoidal transformations, and derived categories
  of coherent sheaves.
\newblock {\em Izv. Ross. Akad. Nauk Ser. Mat.}, 56(4):852--862, 1992.

\bibitem[SGA6]{sga6}
Pierre Berthelot, Alexander Grothendieck, and Luc Illusie.
\newblock {\em Th\'eorie des intersections et th\'eor\`eme de
  {R}iemann-{R}och}.
\newblock Lecture Notes in Mathematics, Vol. 225. Springer-Verlag, Berlin,
  1971.
\newblock S{\'e}minaire de G{\'e}om{\'e}trie Alg{\'e}brique du Bois-Marie
  1966--1967 (SGA 6).

\bibitem[SP17]{stacks-project}
The {Stacks Project Authors}.
\newblock Stacks project.
\newblock \url{http://stacks.math.columbia.edu}, 2017.

\bibitem[Str72]{street-monads}
Ross Street.
\newblock The formal theory of monads.
\newblock {\em J. Pure Appl. Algebra}, 2(2):149--168, 1972.

\bibitem[Tho93a]{thomason-fibre-projectif}
Robert~W. Thomason.
\newblock Les {$K$}-groupes d'un fibr\'e projectif.
\newblock In {\em Algebraic {$K$}-theory and algebraic topology ({L}ake
  {L}ouise, {AB}, 1991)}, volume 407 of {\em NATO Adv. Sci. Inst. Ser. C Math.
  Phys. Sci.}, pages 243--248. Kluwer Acad. Publ., Dordrecht, 1993.

\bibitem[Tho93b]{thomason1993}
Robert~W. Thomason.
\newblock Les {$K$}-groupes d'un sch\'ema \'eclat\'e et une formule
  d'intersection exc\'edentaire.
\newblock {\em Invent. Math.}, 112(1):195--215, 1993.

\end{thebibliography}

\end{document}